\newcommand \pref[1]  {\textbf{\ref{#1}}}
\newcommand \ptitle[1] { (\textbf{#1}) \ }
\newcommand\ttJ  {{\tt J}}
\newcommand\ttI {{\tt I}}
\newcommand\ttK {{\tt K}}
\newcommand\ttb  {{\tt b}}
\newcommand\ttp  {{\tt p}}
\newcommand \subperm[1]{{\bar #1}} 
\newcommand \rtid[1]{{\widetilde #1}}  
\newcommand \invd[1]{{\widehat #1}}       
\newcommand \prmi{\subperm{\imath}}
\newcommand \prmj{\subperm{\jmath}}
\newcommand \prmk{\subperm{k}}
\newcommand \prml{\subperm{\ell}}
\theoremstyle{plain}
\newtheorem{theorem}{Theorem}[subsection]
\newtheorem{lemma}[theorem]{Lemma}
\newtheorem{corollary}[theorem]{Corollary}
\newtheorem{proposition}[theorem]{Proposition}
\theoremstyle{definition}
\newtheorem{definition}[theorem]{Definition}
\newtheorem{example}[theorem]{Example}
\newtheorem{remark}[theorem]{Remark}
\newtheorem{assumptions}[theorem]{Assumptions}
\newtheorem{pgraph}[theorem]{{}}
\DeclareMathOperator\ad{ad}
\DeclareMathOperator\ATI{\mathsf{ATI}}
\DeclareMathOperator\AL{A}
\DeclareMathOperator\BC{BC}
\DeclareMathOperator{\Cent}{Cent}
\DeclareMathOperator\characteristic{char}
\DeclareMathOperator\CDer{CDer}
\DeclareMathOperator\Der{Der}
\DeclareMathOperator\diag{diag}
\DeclareMathOperator\End{End}
\DeclareMathOperator\GL{GL}
\DeclareMathOperator\Hom{Hom}
\DeclareMathOperator\iso{iso}
\DeclareMathOperator\Mat{Mat}
\DeclareMathOperator\Orth{O}
\DeclareMathOperator\rank{rank}
\DeclareMathOperator\rad{rad}
\DeclareMathOperator\SCDer{SCDer}
\DeclareMathOperator\speciallinear{\mathfrak{sl}}
\DeclareMathOperator\supp{supp}
\DeclareMathOperator\tr{tr}
\DeclareMathOperator\spann{span}
\newcommand \andd{\quad\text{and}\quad}
\newcommand \distu {\uplus} 
\newcommand \id {\text{id}}
\newcommand\order[1]{\left\vert #1 \right\vert}
\newcommand \set[1]{\{#1 \}}
\newcommand \suchthat { \mid }
\newcommand \zero {\{0\}}
\newcommand \F {\mathbb F} 
\newcommand \bbQ {\mathbb Q}
\newcommand \bbZ {\mathbb Z}
\newcommand \cA {\mathcal A}
\newcommand \cB {\mathcal B}
\newcommand \cD {\mathcal D}
\newcommand \cE {\mathcal E}
\newcommand \cF {\mathcal F}
\newcommand \cL {\mathcal L}
\newcommand \cM {\mathcal M}
\newcommand \cS {\mathcal S}
\newcommand \cU {\mathcal U}
\newcommand \fbu {{\mathfrak b}{\mathfrak u}}
\newcommand \fe {{\mathfrak e}}
\newcommand \ffu {{\mathfrak f\mathfrak u}}
\newcommand \fsu {{\mathfrak s \mathfrak u}}
\newcommand \fu {{\mathfrak u}}
\newcommand{\fz}{{\mathfrak z}}
\newcommand \al {\alpha}
\newcommand \ep {\varepsilon}
\newcommand \Dl {\Delta}
\newcommand \Dlind {\Delta_{\mathrm {ind}}}
\newcommand \gm {\gamma}
\newcommand\Gm{\Gamma}
\newcommand \lm {\lambda}
\newcommand\ph{\varphi}
\newcommand \sg {\sigma}
\newcommand\boldr{{\mathbf r}}
\newcommand\bolds{{\mathbf s}}
\newcommand\tM{\widetilde{M}}
\newcommand\tL{{\widetilde \Llat}}
\newcommand\tsg{{\tilde\sigma}}
\newcommand\ttau{{\tilde\tau}}
\newcommand \an{\text{an}}
\newcommand\egr{\text{ex}}
\newcommand\gr{\text{gr}}
\newcommand \hyp{\text{hyp}}
\newcommand\rgr{\text{rt}}
\newcommand \Br {\hbox{\rm B}_r}
\newcommand \BCr {\hbox{\rm BC}_r}
\newcommand \centre {Z} 
\newcommand \gd {{\mathfrak g}} 
\newcommand \gl {\mathfrak {gl}}
\newcommand \half{\frac 12}
\newcommand \G {\Lambda} 
\newcommand \hd {{\mathfrak h}} 
\newcommand \Llat {L}  
\newcommand \pmd {\negthickspace\negthickspace  \pmod{[\mathcal A,\mathcal A]}}
\newcommand \rk{r} 
\newcommand \vz{{v_0}}  
\newcommand \twor{{2r}} 
\newcommand \Slat{S} 
\newcommand \U{\mathsf{U}}
\newcommand \Mlat{M}
\newcommand\QL{\bbQ\Llat}
\newcommand\Ztwo{\mathbb Z_2}
\newcommand\tildemap{\widetilde{\phantom{a}}}
\newcommand\form{(\,\, \vert\,\, )}
\newcommand\Lmult{\mathsf{M}}
\newcommand\staraction{{}_{{}^\ast}}
\newcommand\rC{{\mathsf C}}
\newcommand\rD{{\mathsf D}}
\newcommand\rE{{\mathsf E}}
\newcommand\rH{{\mathsf H}}
\newcommand\rc{\textrm{c}}
\newcommand\grd{{\text{gr}*}} 
\font \cm = cmr8
\begin{document}

\title[Unitary Lie Algebras and Lie Tori]{Unitary Lie Algebras and Lie Tori
of Type BC$_{\boldsymbol{\large r}}$, ${\boldsymbol r\ge 3}$}
\author{Bruce Allison}
{\thanks{The authors gratefully acknowledge support from the following
grants: \ NSERC grant RGPIN 8465 and NSF grant \#{}DMS--0245082.} } \address{Department of Mathematics and Statistics \\ University
of Victoria \\  PO BOX 3060 STN CSC\\ Victoria, B.C.~Canada  V8W 3R4}
\email{ballison@uvic.ca}

\author{Georgia Benkart}\address{Department of Mathematics \\ University
of Wisconsin - Madison \\  Madison, Wisconsin 53706, USA}
\email{benkart@math.wisc.edu}

\subjclass[2000]{17B65, 17B70, 17B20}
\date{November 18, 2008}

\keywords{unitary Lie algebras, Lie tori, root system $\BC_r$, extended affine Lie algebras}

\begin{abstract}  A Lie $\G$-torus of type  $\hbox{\rm X}_r$ is a Lie algebra with two gradings -- one
 by an abelian group $\G$ and the other by
the root lattice of a finite irreducible root system of type  $\hbox{\rm X}_r$.
In this paper we  construct a centreless Lie $\G$-torus of type $\BCr$, which we call a \emph{unitary Lie $\G$-torus},
as it is  a special unitary  Lie algebra of a nondegenerate $\G$-graded hermitian form
of Witt index $r$ over an associative torus with involution.   We prove
a structure theorem for centreless Lie $\G$-tori of type $\BCr$, $\rk \ge 3$,
that states that any such Lie torus is bi-isomorphic to a unitary Lie $\G$-torus,
and we determine necessary and sufficient conditions for two unitary Lie $\G$-tori to be bi-isomorphic.
The motivation to investigate Lie $\G$-tori came from the theory of extended affine Lie
algebras, which are natural generalizations of the affine and toroidal Lie algebras.
Every extended affine Lie algebra possesses an ideal which is a Lie $n$-torus of
type $\hbox{\rm X}_r$ for  some irreducible root system $\hbox{\rm X}_r$, where by an $n$-torus
we mean that the group $\G$ is  a free abelian group of rank $n$ for some $n \geq 0$.   The structure theorem above enables us to
classify centreless Lie $n$-tori of type $\BCr$, $r\geq 3$.
We show that they are determined by pairs consisting of a quadratic form $\kappa$  on an $n$-dimensional
$\mathbb Z_2$-vector space and of an orbit of the orthogonal group of $\kappa$.    We use that
result  to construct extended affine Lie algebras of type $\BCr$, $r \geq 3$.
Our article completes a large project involving many earlier papers and many authors to
determine the centreless Lie $n$-tori of all types.
\end{abstract}
\maketitle
\vspace{-.2 truein}

\tableofcontents

\section[Introduction] {{\cm INTRODUCTION}}
\label{sec:intro}
In Lie theory it is known
that a finite-dimensional simple Lie
algebra over a (not necessarily algebraically closed)
field of characteristic 0 having a root system of type
$\BCr$  or $\Br$, $r\ge 3$,  relative
to a maximal split toral subalgebra
is isomorphic to the special unitary Lie algebra of a nondegenerate
hermitian form of Witt index~$r$ (\cite[Chap.~V]{S}, \cite{T}). In
this paper, we prove an infinite-dimensional graded analogue of that
result.  More specifically, we show that any centreless Lie torus of
type $\BCr$ for $\rk\ge 3$ is  bi-isomorphic to the special
unitary Lie algebra of a nondegenerate graded hermitian form
of Witt index
$r$.\footnote{For Lie tori,  and more generally for root graded Lie algebras,
type $\BCr$ contains type $\Br$ as a special case (see Remark \ref{rem:Btype}).  In contrast, for   finite-dimensional simple  Lie algebras and
for extended affine Lie algebras the convention
 is that type $\Br$ and type $\BCr$ are disjoint.
Consequently, extended affine Lie algebras of type $\Br$ \emph{and} $\BCr$ are constructed from Lie tori of type $\BCr$ (see
Section~\ref{subsec:EALA}).}

The  notion of a Lie torus was  first introduced by
Y.~Yoshii (\cite{Y3}, \cite{Y4}).   An equivalent definition was later
formulated by E.~Neher  in \cite{N1}.
By   definition, a \emph{Lie $\G$-torus} $\cL$ \emph {of type} $\Dl$
has two compatible gradings, one a \emph{root grading} by the root lattice of a finite irreducible
(not necessarily reduced) root system $\Dl$ and the other an \emph{external grading}
by an arbitrary abelian group $\G$.  Because of the double grading, there is a natural notion of equivalence for Lie tori called
\emph{bi-isomorphism}.

The motivation for the study of
Lie tori came from extended affine Lie algebras (EALAs), which are natural
generalizations of the affine and toroidal Lie algebras.  There is a construction due to Neher
of a family of EALAs starting from
a centreless Lie $\G$-torus with $\G$ isomorphic to $\bbZ^n$  for some $n\ge 0$ (or what is referred to as a centreless Lie $n$-torus).  Moreover,
any EALA occurs in one such family \cite{N2}.
So an understanding of the structure of centreless Lie $n$-tori  yields
a corresponding understanding of the structure of EALAs.  More generally,
for an arbitrary torsion-free abelian group   $\G$, centreless Lie $\G$-tori that possess an invariant form
can be used to construct what are called  \emph{invariant affine reflection algebras}.
(See  \cite[\S 6.7]{N3} and Remark \ref{rem:IARA} below.)

This paper is devoted to describing the structure of Lie $\G$-tori of type $\BCr$, $\rk \ge 3$.
In  Chapters \ref{sec:prelim}--\ref{sec:Herm} we present  the necessary background on root graded Lie algebras,
Lie tori, and associative tori with involution, as well as hermitian forms and unitary Lie algebras
over associative tori with involution.

Chapter  \ref{sec:main} contains the main results
of the paper.  We first construct a centreless Lie $\G$-torus of type $\BCr$, which we call a \emph{unitary Lie $\G$-torus},
as it is  a special unitary  Lie algebra $\cS$ of a nondegenerate $\G$-graded hermitian form
of Witt index $r$ over an associative torus with involution.  The root grading of $\cS$ is the root space decomposition relative to an ad-diagonalizable subalgebra of $\cS$, and the external grading is induced from the  $\G$-grading of the hermitian form.  We then prove
a structure theorem (Theorem \ref{thm:structure}) for centreless Lie $\G$-tori of type $\BCr$, $\rk \ge 3$,
that states that any such Lie torus is bi-isomorphic to a unitary Lie $\G$-torus.  Our  next
main result (Theorem \ref{thm:biisomorphism})
provides necessary and sufficient conditions for two unitary Lie $\G$-tori to be bi-isomorphic.

In Chapter \ref{sec:EALA} we specialize  to the case when
$\G$ is isomorphic to $\bbZ^n$.  We obtain a classification of  centreless Lie $n$-tori of type
$\BCr$, $r \geq 3$,  in Theorem \ref{thm:classn}.   These Lie tori are determined  by
pairs consisting of a quadratic form $\kappa$ on a $n$-dimensional vector space over
$\bbZ_2$ and an orbit under a certain action of the orthogonal group of~$\kappa$.  We
apply Neher's method
and our results to construct maximal EALAs of type $\Br$ and $\BCr$, $r \geq 3$.
Using coordinates in the grading group $\G$ ($\simeq \bbZ^n$),
we give explicit expressions
for the product and invariant bilinear form on the EALA, in the spirit of   \cite{BGK}.
Chapter \ref{sec:conclude} contains some concluding remarks and some possible directions
for future investigations.

By treating the case of type $\BCr$, $r \geq 3$, our structure theorem completes a program
to describe the centreless Lie $n$-tori of all types.
This effort, which has involved many authors, began in 1993 with the seminal paper on EALAs of type
$\AL_\rk$, $\rk \ge 3$, by Berman, Gao and Krylyuk \cite{BGK}. (For
an overview of the program and relevant references  see \cite[\S 7--11]{AF}.)

Our work on Lie algebras graded by the root system $\BCr$ was started together with
Yun Gao.     Due to other commitments, he felt he could not devote time to this present project
and urged us to proceed without him.
We value his contributions to our monograph  \cite{ABG}
and to the initial investigations that ultimately led to our present paper;
and we thank him for his enthusiastic support of our efforts.

\section[Preliminaries]{\cm PRELIMINARIES}
\label{sec:prelim}

\subsection{Notational conventions}\

We begin with some conventions and definitions that will be  used throughout the paper.

All algebras and vector spaces  are over $\F$,  \emph {a field of characteristic different from 2}.
In Sections \ref{subsec:BCunitary}, \ref{subsec:Lientori}, \ref{subsec:EALA} and
Chapters \ref{sec:RGLT}, \ref{sec:main},  \ref{sec:conclude}  we assume that  $\F$ has characteristic 0,
and we prove the main  results  of the paper under that hypothesis.
This additional assumption on the field will always be stated explicitly.

Unless indicated to the contrary, all associative algebras are  unital,
and by a module for an associative algebra $\cA$,  we mean a right module for $\cA$.
If $X$ is an $\cA$-module,  then $\gl_\cA(X)$ is the Lie algebra with underlying space
$\End_\cA(X)$  under the commutator product.
The  \emph{centre}  of an associative or Lie algebra $\cA$ is
denoted  by $\centre(\cA)$.  A Lie algebra $\cL$ is said to be
\emph{centreless} if  $\centre(\cL) = 0$.
The \emph{centroid} of any algebra $\cA$ is the associative algebra $\Cent(\cA)$
consisting of all endomorphisms of $\cA$ that commute with all left and right multiplications.
If $\cA$ is a unital  associative algebra, then $Z(\cA)$ and $\Cent(\cA)$ are isomorphic under the map
which sends $\al$ to left multiplication by~$\al$.

If $S$ is any subset of a group $\G$, then $\langle S \rangle$ stands for
the subgroup generated by~$S$.

\subsection{Associative algebras with involution and hermitian forms}\
\label{subsec:assocherm}

An \emph{associative algebra with involution} is a pair $(\cA,-)$
consisting of an associative algebra  $\cA$ and a period 2 anti-automorphism ``$-$''  of $\cA$.   We adopt  the notation
\begin{equation*}
\label{eq:Apm}
\cA_+ = \set{\al\in \cA \suchthat \overline \al  = \al} \andd
\cA_-  = \set{\al\in \cA \suchthat \overline \al  = -\al},
\end{equation*}
for the symmetric and skew-symmetric elements relative to
the involution, so that  $\cA = \cA_+ \oplus \cA_-$.
The \emph{centre} of $(\cA,-)$ is defined as
\[Z(\cA,-) = Z(\cA)\cap \cA_+.\]

If $(\cA,-)$ is an associative algebra with involution,
a map $\xi : X \times X \to \cA$ is called a
\emph{hermitian form} over $(\cA, -)$ if
$X$ is a (right)  $\cA$-module and
$\xi : X \times X \to \cA$
is a bi-additive map such that
\begin{equation*}
\label{eq:skewherm}
\xi(x.\al,\,y) = \overline\al \xi(x,y), \quad \xi(x, \,y. \al) = \xi(x,y)\al \andd \xi(y,x) = \overline{\xi(x,y)}
\end{equation*}
for $\al\in \cA$ and $x,y\in X$.  If $Y$  is an $\cA$-submodule of $X$, then
\[Y^\perp := \set{x\in X \suchthat \xi(x,y)  = 0 \text{ for all } y \in Y}\]
is an $\cA$-submodule of $X$.   The form $\xi$ is \emph{nondegenerate} if $X^\perp = 0$.
If $\al\in \cA$, we say that $\xi$ \emph{represents}
$\al$ if $\xi(x,x) = \al$ for some $x\in X$.

\subsection{Graded structures}\

Let $\G$ be an additive abelian group.
\begin{pgraph}
\label{pgraph:graded} We have the following basic terminology:

(a) A vector space $X$ over $\F$ is \emph{$\G$-graded}
if $X$ has a decomposition
$X = \bigoplus_{\sg\in \G} X^\sg$  into subspaces indexed
by $\G$.%
\footnote{We write the degrees of the graded spaces
as superscripts except in the case of root gradings (see Section \ref{subsec:rootgraded}),
where it  is more customary to use subscripts.}
If $x\in X$,  by  $\deg_\G(x) = \sg$ we mean that $x\in X^\sg$.
The $\G$-\emph{support} of  $X$ is
\[\supp_\G(X) = \set{\sg\in \G \suchthat X^\sg \ne 0}.\]
If the subgroup $\langle \supp_\G (X) \rangle$ of $\G$ generated by  $\supp_\G (X)$
equals $\G$,  then
$X$   is said to have \emph{full support} in
$\G$.%
\footnote{Often we assume a graded space has full support,
since if this condition is not satisfied, we can always replace $\G$ by the group
$\langle \supp_\G (X) \rangle$. }
When $\dim_\F(X^\sg)$ is finite for all $\sg\in \G$, then $X$ is said to have \emph{finite graded $\F$-dimension},  and
when  $\dim_\F X^\sg \le 1$ for all $\sg\in  \G$,  then  $X$ is called
\emph{finely $\G$-graded}.
If $L$ is a subgroup of an abelian group $\G$ and $X$ is  an $L$-graded vector space, we
regard $X$ as an $\G$-graded  vector space by setting $X^\sg = 0$  for $\sg\in \G \setminus L$.

(b)
An \emph{algebra $\cA$ is  $\G$-graded}   if
$\cA = \bigoplus_{\sg\in \G} \cA^\sg$  is graded as a vector space
and $\cA^\sg \cA^\tau \subseteq \cA^{\sg+\tau}$ for $\sg,\tau\in \cA$.

(c)
An \emph{associative algebra with involution
$(\cA,-)$ is
$\G$-graded} if $\cA$
is $\G$-graded as an algebra,  and the involution preserves the grading.
Then  $\cA_-$ and $\cA_+$  are graded subspaces of $\cA$,
and we set
\[\G_- = \G_-(\cA,-)= \supp_\G(\cA_-) \andd \G_+ = \G_+(\cA,-)= \supp_\G(\cA_+)\]
so that
\[\supp_\G(\cA)  = \G_- \cup \G_+.\]
Note that in general $\G_-$ and $\G_+$
may not be subgroups of $\G$.

(d)
If $\cA$ is a $\G$-graded algebra and $\cA'$ is a $\G'$-graded algebra,
an \emph{isograded-isomorphism} of  $\cA$ onto $\cA'$ is a pair
$(\ph,\ph_\gr)$,   where $\ph: \cA \to \cA'$ is an algebra isomorphism,
$\ph_\gr : \G \to \G'$ is a group isomorphism and
$\ph(\cA^\sg) =
\cA'^{\ph_\gr(\sg)}$ for $\sg\in \G$.
If such a pair exists, we say that
$\cA$ and $\cA'$ are  \emph{isograded-isomorphic}.
If $\cA$ has full support in $\G$,
then $\ph_\gr$ is determined by $\ph$ and we can
abbreviate the pair $(\ph,\ph_\gr)$  as $\ph$.
When  $\G = \G'$ and $\ph_\gr = {\rm id}$,
then $\cA$ and $\cA'$ are said to be  \emph{graded-isomorphic}.  The notions of
isograded-isomorphic and graded-isomorphic for graded associative algebras  with involution
are defined similarly (by insisting that the map $\ph$ respects the involutions).

(e)
If $\cA$ is a $\G$-graded associative algebra,
 \emph{an $\cA$-module $X$ is $\G$-graded}
if $X= \bigoplus_{\sg\in\G} X^\sg$ is graded  as a vector space and
$X^\sg . \cA^\tau \subseteq X^{\sg+\tau}$ for $\sg,\tau \in  \G$.

(f)
If $(\cA,-)$ is a $\G$-graded associative algebra with involution,
\emph{a hermitian form $\xi : X \times X \to \cA$ over $(\cA,-)$
is said to be $\G$-graded}  if the  $\cA$-module $X$ is $\G$-graded
and $\xi(X^\sg,X^\tau) \subseteq  \cA^{\sg+\tau}$ for $\sg,\tau\in  \G$.
We then say  that $\xi$ is \emph{of finite graded $\F$-dimension} (resp.~\emph{finely $\G$-graded})
if $X$ is \emph{of finite graded $\F$-dimension} (resp.~\emph{finely $\G$-graded}).

(g)
If $X$  is a $\G$-graded $\cA$-module,  we set
\[\End_\cA(X)^\sg = \set{ T\in \End_\cA(X) \suchthat T(X^\tau) \subseteq X^{\sg+\tau} \text{ for } \tau\in \G },\]
for $\sg\in \G$, and we let $\End^\gr_\cA(X) = \bigoplus_{\sg\in\G} \End_\cA(X)^\sg$.
Then $\End^\gr_\cA(X)$ is a $\G$-graded associative algebra under composition.
We further let $\gl^\gr_\cA(X)$ be the $\G$-graded Lie algebra with underlying graded space
$\End^\gr_\cA(X)$ under the commutator product.
We say that the gradings on $\End^\gr_\cA(X)$  and $\gl^\gr_\cA(X)$
 are \emph{induced} by the grading on $X$.
If  $\supp_\G(X)$ is finite or if $X$ is a finitely generated $\cA$-module,
then $\End_\cA(X)= \End^\gr_\cA(X)$ is $\G$-graded \cite[Cor.~2.4.4 and 2.4.5]{NvO},
and hence $\gl_\cA(X) = \gl^\gr_\cA(X)$ is  $\G$-graded.
\end{pgraph}

\section[Root graded Lie algebras and Lie tori]{\cm ROOT GRADED LIE ALGEBRAS AND LIE TORI}
\label{sec:RGLT}

Lie tori are root graded
Lie algebras with additional structure.
In this  chapter, we recall the notions of root graded Lie algebras
and Lie tori.

We suppose throughout the  chapter that   \emph{$\F$ is a field of characteristic 0 and that
$\Dl$ is a finite irreducible (not necessarily reduced)
root system  in a finite-dimensional vector space
$\F \Dl$ (defined for example as in
\cite[Chap. VI, \S 1, Def.~1]{Bo})}.\footnote{In \cite{N1} and in other papers on
Lie tori and extended affine Lie algebras, it has been convenient to adopt the convention that
0 is a root.  However,
for compatibility with \cite{ABG}, we do not do that here.}

\subsection{Root systems} \quad
\label{subsec:rootsystem} \smallskip

Our notation for root systems is standard.  Let
\[Q = Q(\Dl) := \spann_{\mathbb Z}(\Dl)\]
be the \emph{root lattice} of $\Dl$, and
\[\Dlind := \left\{\mu\in \Dl \,\Big | \, \textstyle \frac 12 \mu \notin \Dl\right\}\]
be the set of \emph{indivisible} roots in $\Dl$.
For
$\mu\in \Dl$, $\mu^\vee$ will denote the \emph{coroot} of
$\mu$. That is, $\mu^\vee$ is the element of the dual space
of the vector space $\F \Dl$ so that $\nu \mapsto \nu - \langle
\nu\mid \mu^\vee \rangle\mu$ is the reflection corresponding
to $\mu$ in the Weyl group of $\Dl$, where $\langle\,\, \mid \,\,
\rangle$ is the natural pairing of $\F \Dl$ with its
dual space.

The root system $\Dl$  has type $\text X_\rk$, where
$\text X_\rk = \text A_\rk$, $\text B_\rk$, $\text C_\rk$, $\text D_\rk$, $\text E_6$, $\text E_7$,
$\text E_8$, $\text F_4$, $\text G_2$ or $\BC_\rk$.  If X$_\rk \ne \BCr$, then
$\Dl$
is reduced  (that is $2\mu\notin \Dl$ for $\mu\in \Dl$) and $\Dl = \Dlind$.
On the other hand,  if $\text X_\rk = \BCr$, then $\Dlind$ is an irreducible root system
of type $\Br$ (see \pref{pgraph:BCr} below).

\begin{pgraph}
\label{pgraph:BCr}
If $\Dl$ has type $\BCr$,
we may choose a $\bbZ$-basis
$\ep_1,\dots,\ep_r$ for $Q$ so that
\begin{equation*}
\Dl = \set{\pm \ep_i\mid 1  \le i \le \rk}\cup\{\pm (\ep_i\pm\ep_j)\mid 1\le i < j \le \rk\}
\cup\set{\pm 2\ep_i\mid 1\le i \le \rk}\ \  \hbox{\rm and }
\end{equation*}
\[\Dlind =\set{\pm \ep_i\mid 1  \le i \le \rk}
\cup\{\pm (\ep_i\pm\ep_j)\mid 1\le i < j \le \rk\}.\]
For this basis,   we define a permutation $i\mapsto \prmi$
of $\set{1,\dots,\twor}$ by $\prmi = \twor+1-i$ and set
$\ep_{\bar\imath} = -\ep_i$ for $1\le i \le \rk$,
so that
\[\ep_{\bar \imath} =   -\ep_i \qquad \hbox{\rm for all} \ \ 1 \leq i \leq \twor, \ \ \hbox{\rm and} \]
\begin{equation*}
\label{eq:BCr}
\begin{aligned}
\Dl &= \set{\ep_i\mid 1  \le i \le 2\rk}\cup\{\ep_i+\ep_j\mid 1\le i,j\le 2\rk,\ j\ne \prmi\},\\
&= \set{\ep_i\mid 1  \le i \le 2\rk}\cup\{\ep_i+\ep_j\mid 1\le i \le j\le 2\rk,\ j\ne \prmi\},
\end{aligned}
\end{equation*}
where the expressions on the last line are unique.
\end{pgraph}
\begin{pgraph}
When $\Dl$ is of type $\BCr$ in  various examples
(such as in Sections  \ref{subsec:BCunitary} and \ref{subsec:BCconstruct} below), we do not
assume
a priori that a choice of
basis for $Q$ as in \pref{pgraph:BCr} has been made, but rather instead
use a basis that arises naturally.
\end{pgraph}

\subsection{Root graded Lie algebras} \quad
\label{subsec:rootgraded} \smallskip

\begin{definition} (\cite[Chap.~1]{ABG})
\label{def:BCgraded}
A \emph{$\Dl$-graded (or root graded) Lie algebra} (with grading subalgebra of type $\Dlind$)
is a $Q$-graded Lie algebra $\cL = \bigoplus_{\mu\in Q} \cL_{\mu}$ over $\F$ satisfying
\begin{description}
\item[\textbf{(RG1)}] $\cL$ has a split simple subalgebra $\gd$ with splitting Cartan subalgebra
$\hd$,  and there exists an $\F$-linear isomorphism $\mu\mapsto \rtid{\mu}$ of $\F\Dl$ onto $\hd^*$
such that the root system of $\gd$ with respect to $\hd$ is
$\rtid{\Dlind}$,  and such that

\noindent $\cL_{\mu} = \set{x\in \cL \suchthat [h,x] = \rtid{\mu}(h)x \text{ for } h\in \hd}$  for $\mu\in Q$;
\item[\textbf{(RG2)}] $\supp_Q(\cL) \subseteq \Dl\cup\set{0}$;
\item[\textbf{(RG3)}] $\cL$ is generated as a Lie algebra by the spaces $\cL_\mu$, $\mu\in \Dl$.
\end{description}
In that case, we say that $(\gd,\hd)$ is the \emph{grading pair} for $\cL$,
$\gd$ is the  \emph{grading (simple) subalgebra}
of $\cL$, and $\hd$ is the \emph{grading ad-diagonalizable subalgebra}
of $\cL$.
Also, when $\Delta$ has type X$_\rk$,   we often refer to a  $\Delta$-graded Lie algebra  as
an \emph{$\text X_\rk$-graded Lie algebra}.
\end{definition}

\begin{pgraph}
\label{pgraph:modproperties}
If $\cL$ is a $\Dl$-graded Lie algebra with grading pair $(\gd,\hd)$, then $Z(\cL)\subseteq \cL_0$
and $\cL/Z(\cL)$ is $\Dl$-graded Lie  algebra with the induced $Q$-grading
and with grading pair $(\pi(\gd),\pi(\hd))$, where $\pi : \cL \to \cL/Z(\cL)$ is the canonical map.
\end{pgraph}  \smallskip

In this paper we are primarily  interested in the case that  $\Dl$ is of type $\BCr$ for  $\rk\ge 3$.
Such $\Dl$-graded Lie algebras are described in \cite[Chap.~2 and 3]{ABG},
and we will recall that description in Section
\ref{subsec:BCunitary}.

\subsection{Lie tori} \label{subsec:LTandDG}   \qquad

Let \emph{$\G$ be an arbitrary additive abelian group and $Q$ be the root lattice of
a root system $\Dl$.}   If $\cL$ is a $(Q\times \G)$-graded Lie algebra,  we write
$\cL_\mu^\sg$ for the $(\mu,\sg)$-component of $\cL$
(rather than $\cL^{(\mu,\sg)}$ or $\cL_{(\mu,\sg)}$) and
adopt the notation
\[\cL^\sg = \bigoplus_{\mu\in Q} \cL_\mu^\sg \ \  \text{ for } \sg\in \G
\quad \andd \quad \cL_\mu = \bigoplus_{\sg\in \G} \cL_\mu^\sg \ \ \text{ for }\mu\in Q.\]
In this way,  $\cL =  \bigoplus_{\mu \in Q} \cL_\mu= \bigoplus_{\sg \in \G} \cL^\sg$ is both a $Q$-graded algebra and
a $\G$-graded algebra, and these gradings are compatible  in the sense that
each $\cL_\mu$ is $\G$-graded (or equivalently,  each $\cL^\sg$ is
$Q$-graded).  Conversely, compatible
gradings by  $Q$ and $\G$ on an algebra $\cL$ determine a $(Q\times \G)$-grading on $\cL$.

Next we present the definition of a Lie torus following \cite{N1}.
\begin{definition}
\label{def:Lietorus}
A \emph{Lie $\G$-torus of type $\Dl$} is a $(Q\times\G)$-graded Lie algebra
$\cL$ over $\F$
satisfying:
\begin{description}
\item[(LT1)]  $\supp_Q(\cL)\subseteq \Dl\cup \zero$.
\item[(LT2)]
\begin{itemize}
\item[(i)] $\cL_\mu^0 \ne 0$ for $\mu\in \Dlind$.
\item[(ii)] If $\mu\in \Dl$, $\sg\in \G$, and $\cL_\mu^\sg \ne 0$, then
$\cL_\mu^\sg = \F e_\mu^\sg$ and $\cL_{-\mu}^{-\sg} = \F f_\mu^\sg,$
where
\[[[e_\mu^\sg,f_\mu^\sg],x_\nu^\tau] =  \langle\nu\mid\mu^\vee\rangle x_\nu^\tau\]
for all $x_\nu^\tau\in \cL_\nu^\tau$, $\nu\in Q$, $\tau\in \G$.
\end{itemize}
\item[(LT3)] $\cL$ is generated as a Lie algebra by the spaces $\cL_\mu$, $\mu\in \Dl$.
\item[(LT4)] $\cL$ has full support in $\G$.
\end{description}
The $Q$-grading (resp.~the $\G$-grading) on $\cL$ is called
the \emph{root grading} (resp.~the \emph{external grading}) of $\cL$.
If $\Dl$ has type  $\text X_\rk$, we often refer to a Lie $\G$-torus
of type $\Dl$ as a \emph{Lie $\G$-torus of type $\text X_\rk$}.  We
use the term \emph{Lie torus} when it is not necessary to specify either $\G$ or $\Dl$.
\end{definition}

\begin{remark}
\label{rem:LTsupport} Suppose that $\cL$ is a Lie $\G$-torus of type $\Dl$.
\begin{itemize}
\item[(a)] It is known (see \cite{N1}) that $\cL$ is a $\Dl$-graded Lie algebra, as defined in Section \ref{subsec:rootgraded}.
(See Proposition \ref{prop:LTbasic}~(g) below for the case when $\cL$  is centreless.)
\item[(b)] By \cite[Lem.~1.1.10]{ABFP}, either
\[\supp_Q(\cL) = \Dl\cup\zero \ \ \hbox{\rm or} \ \ \supp_Q(\cL) = \Dlind \cup \zero. \]
\item[(c)]  If $\cL$ is centreless, then
the centroid $\Cent(\cL)$ of $\cL$ is a $\G$-graded subalgebra of $\End^\gr_\F(\cL)$,
and $\supp_\G(\Cent(\cL))$ is a subgroup of $\G$,
called the \emph{centroidal grading group} of $\cL$ \cite[Prop.~3.13]{BN}
\end{itemize}
\end{remark}

\begin{remark} \label{rem:Btype} Suppose that $\Dl$ is a root system of type
$\BCr$.
The Lie $\G$-tori of type $\Dlind$ are precisely the Lie $\G$-tori of type $\Dl$
whose $Q$-support equals   $\Dlind\cup\zero$. So the class of Lie $\G$-tori of type $\Br$
is contained in the class of Lie $\G$-tori of type~$\BCr$.
\end{remark}

We will use the following natural notion of equivalence for $(Q\times \G)$-graded
algebras and hence in particular for Lie tori  \cite[Sec.~2.1]{ABFP}.

\begin{definition}
\label{def:equivLT} Let $\cL$ be a $(Q\times \G)$-graded Lie algebra
and let $\cL'$ be a $(Q'\times \G')$-graded Lie algebra
(where here $\G$, $Q$, $\G'$,  and $Q'$ can be arbitrary abelian groups).
A \emph{bi-isograded-isomorphism}, or a \emph{bi-isomorphism} for short, of $\cL$ onto $\cL'$
is a triple $(\psi,\psi_\rgr,\psi_\egr)$, where
$\psi : \cL \to \cL'$ is an algebra isomorphism,
$\psi_\rgr: Q \to Q'$ and $\psi_\egr : \G \to \G'$ are group isomorphisms,
and $\psi(\cL_\mu^\sigma) = {\cL'}_{\psi_\rgr(\mu)}^{\psi_\egr(\sigma)}$
for $\mu\in Q$ and $\sigma \in \G$.  If such a triple exists,
we say that $\cL$ and $\cL'$ are bi-isomorphic.
If $\cL$ has full $(Q\times\G)$-support (for example when $\cL$ is a Lie torus),
then $\psi_\rgr$ and $\psi_\egr$ are determined by $\psi$,  and in that case we
abbreviate the triple $(\psi,\psi_\rgr,\psi_\egr)$  as $\psi$ .
\end{definition}

\begin{remark}
\label{rem:rootiso}
Suppose that $\psi$  is a bi-isomorphism of a Lie $\G$-torus $\cL$
of type $\Dl$ onto a Lie $\G'$-torus of type $\Dl'$.  Then $\psi_\rgr(\supp_Q(\cL)) = \supp_{Q'}(\cL')$.
Hence, by Remark \ref{rem:LTsupport}\,(b), if  $\Dl$ and $\Dl'$ are either both reduced or both non-reduced,
we have $\psi_\rgr(\Dl) = \Dl'$, so $\psi_\rgr$ is an isomorphism of the root system $\Dl$ onto the
root system $\Dl'$.
\end{remark}

\subsection{Basics on centreless Lie tori}
\label{subsec:basics}  \qquad

 \smallskip

\emph{Throughout this section,
we assume  that $\cL$ is a centreless Lie $\G$-torus of type $\Dl$.}

Following \cite{N1}, we let  $\gd$ denote the subalgebra of $\cL$ generated by
$\set{\cL_{\mu}^0}_{\mu\in\Dl}$ and set  $\hd = \sum_{\mu\in\Dl} [\cL_{\mu}^0,\cL_{-\mu}^0]$.

\begin{proposition}
\label{prop:LTbasic}\  Assume  that
$\cL = \bigoplus_{(\mu,\sg) \in Q \times \G} \cL_\mu^\sg$ is a centreless Lie $\G$-torus of type $\Dl$.
\begin{itemize}
\item[(a)] If  $\mu\in\Dlind$, then $\cL_{2\mu}^0 = 0$.
\item[(b)]  $\gd$ is a finite-dimensional split simple
Lie algebra with splitting Cartan subalgebra~$\hd$.
\item[(c)]  There is a unique linear isomorphism
$\mu \to \rtid{\mu}$ of $\F \Dl$ onto $\hd^*$
such that $\rtid{\Dlind}$
is the set of roots of $\gd$ relative to
$\hd$ and
$[e_\mu^0,f_\mu^0] = \rtid{\mu}^\vee$
for $\mu\in\Dlind$.
(Here $\rtid{\mu}^\vee\in (\hd^*)^* = \hd$.)
\item[(d)] If $\mu\in Q$, then
$\cL_\mu = \set{x\in \cL \suchthat [h,x] = \rtid{\mu}(h) x \text{ for } h\in \hd}$.
\item[(e)] If $\mu\in \Dl$, $\sg\in \G$,  and $\cL_\mu^\sg \ne 0$,
then
$[e_\mu^\sg,f_\mu^\sg] = \rtid{\mu}^\vee$.
\item[(f)]
$\gd = \cL^0$
and
$\hd = \cL^0_0.$
\item[(g)] As a $Q$-graded Lie  algebra,
$\cL$ is a $\Dl$-graded Lie algebra  with grading pair $(\gd,\hd)$.
\end{itemize}
\end{proposition}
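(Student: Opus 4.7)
The plan is to establish (a)--(g) in the stated order, using only axioms (LT1)--(LT4), elementary $\speciallinear_2$-representation theory in characteristic zero, and the centreless hypothesis $Z(\cL)=0$. Part (a)---the ``$\BC_\rk$ cancellation''---is the key technical ingredient. Assuming $\cL_{2\mu}^0\ne 0$ for some $\mu\in\Dlind$, (LT2)(ii) yields nonzero $e_{2\mu}^0$ and $f_{2\mu}^0$. The $\speciallinear_2$-triple $(e_\mu^0,h_\mu,f_\mu^0)$ with $h_\mu=[e_\mu^0,f_\mu^0]$ assigns $e_{2\mu}^0$ weight $4$ by (LT2)(ii), and $[e_\mu^0,e_{2\mu}^0]\in\cL_{3\mu}=0$ since $3\mu\notin\Dl$, so $e_{2\mu}^0$ is a highest-weight vector. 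Iterating $\ad f_\mu^0$ and using the one-dimensionality of $\cL_{\pm\mu}^0$ from (LT2)(ii), the successive images lie in $\F e_\mu^0$, $\F h_\mu$, and $\F f_\mu^0$, after which $[f_\mu^0,f_\mu^0]=0$ terminates the chain at step four. But in characteristic zero a cyclic highest-weight-$4$ $\speciallinear_2$-module must be the irreducible representation of dimension $5$; this contradiction gives $\cL_{2\mu}^0=0$.

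Parts (b)--(e) then unwind from (LT2)(ii). By (a), $\gd$ is generated by $\{e_\mu^0,f_\mu^0\}_{\mu\in\Dlind}$; choosing a simple system $\{\mu_i\}\subset\Dlind$, (LT2)(ii) supplies Chevalley relations for $\{e_i,f_i,h_i\}$ (with $[h_i,h_j]=0$ by Jacobi, and $[e_i,f_j]=0$ for $i\ne j$ because $\mu_i-\mu_j\notin\Dl$), while the Serre relations $(\ad e_i)^{1-\langle\mu_j\mid\mu_i^\vee\rangle}e_j=0$ follow from (LT1) since the relevant root strings exit $\Dl$. This yields a surjection from the split simple Lie algebra of type $\Dlind$ onto $\gd$, which is an isomorphism because the source is simple and $\gd\ne 0$; this proves (b). The assignment $\mu\mapsto\rtid\mu$, determined on $\hd$-generators by $\rtid\nu(h_\mu)=\langle\nu\mid\mu^\vee\rangle$, is an $\F$-linear isomorphism $\F\Dl\to\hd^*$ taking $\Dlind$ to the $\hd$-roots of $\gd$, which is (c); in this language, (LT2)(ii) reads as the $\hd$-weight-space decomposition (d). For (e), both $[e_\mu^\sg,f_\mu^\sg]$ and $\rtid\mu^\vee$ act on every $\cL_\nu^\tau$ by $\langle\nu\mid\mu^\vee\rangle$, so their difference is central and hence zero.

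Part (f) is the main obstacle. The inclusion $\gd\subseteq\cL^0$ is immediate, and part (a) together with $\cL_\mu^0=\F e_\mu^0\subseteq\gd$ for $\mu\in\Dlind$ reduces the reverse inclusion to $\cL_0^0\subseteq\hd$. Since (LT2)(ii) with $\nu=0$ gives $[\cL_0^0,\hd]=0$, any $x_0\in\cL_0^0$ acts by scalars $\lambda_\mu^\sg(x_0)$ on each one-dimensional $\cL_\mu^\sg$. Choose $h\in\hd$ realizing the same scalars as $x_0$ on the simple-root spaces $\cL_{\mu_i}^0$; then Jacobi together with $[x_0-h,h_i]=0$ shows $x_0-h$ commutes with every $f_i$, and hence with all of $\gd$. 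To promote this to centrality: part (e) gives $\lambda_\mu^\sg(x_0-h)=-\lambda_{-\mu}^{-\sg}(x_0-h)$, Jacobi on nonzero brackets $[e_\mu^\sg,e_\nu^\tau]$ yields additivity of $\lambda(x_0-h)$, and $\lambda_\mu^0(x_0-h)=0$ by construction; combined with (LT3), these propagate $\lambda(x_0-h)\equiv 0$ throughout $\supp_{Q\times\G}(\cL)$, so $x_0-h\in Z(\cL)=0$. Finally (g) is bookkeeping: axiom (RG1) of Definition \ref{def:BCgraded} follows from (b)+(c)+(d), (RG2) is (LT1), and (RG3) is (LT3). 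The hardest steps are the $\speciallinear_2$-cancellation in (a) and the scalar-propagation in (f); everything else is a direct reading of (LT2)(ii).
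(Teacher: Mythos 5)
The paper itself does not prove this proposition; it cites \cite[Sec.~3]{N1} and \cite[Prop.~6.3]{ABFP} for (a)--(f) and only derives (g), so you are supplying an argument the paper omits. Your parts (a)--(e) and (g) are essentially sound: the $\speciallinear_2$ cancellation in (a) is correct (the only slip is that $(\ad f_\mu^0)^2e_{2\mu}^0$ lies in $\cL_0^0$, not necessarily in $\F h_\mu$, but you never use that), and (c)--(e), (g) unwind as you say. In (b) there is a gloss you should fill: Serre's theorem gives an (injective, by simplicity) homomorphism whose image is the subalgebra $\gd'$ generated by the simple-root vectors $e_i,f_i$, and you assert without argument that this is all of $\gd$. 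It is, but one must note that the root spaces of $\gd'$ relative to $\mathrm{span}\{h_i\}$ are homogeneous, hence sit inside the one-dimensional spaces $\cL_\mu^0$, $\mu\in\Dlind$, forcing $\cL_\mu^0\subseteq\gd'$ and so $\gd=\gd'$; this also repairs the later appeal to ``commutes with all of $\gd$'' in (f).

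The genuine gap is the propagation step in (f). You reduce to showing that $y=x_0-h$, which commutes with $\gd$, has $\lambda_\mu^\sg(y)=0$ for all $(\mu,\sg)$ in the support, and you claim this follows from: antisymmetry $\lambda_\mu^\sg=-\lambda_{-\mu}^{-\sg}$, additivity over nonzero brackets, vanishing at $\G$-degree $0$, and (LT3). These constraints cannot force vanishing: for any group homomorphism $\theta\in\Hom(\G,\F)$ the assignment $\lambda_\mu^\sg=\theta(\sg)$ satisfies every one of them (it is exactly the eigenvalue pattern of the degree derivation $\partial_\theta$, which kills $\gd=\cL^0$, is additive on brackets, and is odd under $(\mu,\sg)\mapsto(-\mu,-\sg)$), yet is nonzero whenever $\G\cong\bbZ^n$, $n\ge 1$. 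So any bookkeeping that uses only these spectral properties of $\ad y$ must fail; you never exploit the fact that $y$ is an \emph{element} of $\cL_0^0$ rather than merely a derivation with this spectrum. The repair is short and makes the whole propagation unnecessary: by (LT1) and (LT3), $V+W$ with $V=\bigoplus_{\mu\in\Dl}\cL_\mu$ and $W=\sum_{\mu\in\Dl}[\cL_\mu,\cL_{-\mu}]$ is a subalgebra containing all $\cL_\mu$, hence equals $\cL$, so $\cL_0=\sum_{\mu\in\Dl}[\cL_\mu,\cL_{-\mu}]$ and therefore $\cL_0^0=\sum_{\mu\in\Dl,\,\sg\in\G}[\cL_\mu^\sg,\cL_{-\mu}^{-\sg}]$; by (LT2)(ii) each nonzero summand is $\F\,[e_\mu^\sg,f_\mu^\sg]$, which equals $\F\,\rtid{\mu}^\vee\subseteq\hd$ by your part (e). Hence $\cL_0^0\subseteq\hd$, giving $\hd=\cL_0^0$ and $\gd=\cL^0$ directly.
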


\begin{proof}  Parts (a)--(e) and  (g) were announced
in \cite[Sec.~3]{N1} under the hypothesis that $\G$  is a finitely generated free abelian group.
A proof of (a)--(f) for arbitrary $\G$ can be found in \cite[Prop.~6.3]{ABFP}.  Part
(g) follows from (b), (c), (d) and (LT3).
\end{proof}

Henceforth, we will use the map $\mu \to \rtid{\mu}$   in
Proposition \ref{prop:LTbasic}\,(c) to identify $\F \Dl$ and $\hd^*$ and will
omit the tildes.
Thus, $\Dl$ is a root system in $\hd^*$ and $\Dlind$ is the set of roots of
$\gd$ relative to $\hd$.  Moreover, by Proposition \ref{prop:LTbasic}\,(d),
we have
\begin{equation}
\label{eq:Lroot}
\cL_\mu = \set{x\in \cL \suchthat [h,x] = \mu(h) x \text{ for } h\in \hd}
\end{equation}
for $\mu\in Q$.
Also, if $\cL_\mu^\sg \ne 0$ for $\mu\in \Dl$, $\sg\in \G$,
then by Proposition \ref{prop:LTbasic}\,(e),
\begin{equation*}
\label{eq:ef}
[e_\mu^\sg,f_\mu^\sg] = \mu^\vee,
\end{equation*}
where $\mu^\vee \in (\hd^*)^* = \hd$.  Thus,
$\set{e_\mu^\sg,\mu^\vee,f_{\mu}^{\sg}}$ is an $\speciallinear_2$-triple.

Following \cite[Sec.~II.2]{AABGP} and \cite{Y3},
we set
\[\G_\mu := \supp_\G(\cL_\mu) = \set{\sg\in \G \suchthat \cL_\mu^\sg \ne 0}\]
for  $\mu\in \Dl$.  The following facts are proved in \cite[Sec.~3]{Y3} (see
also \cite[Lem.~1.1.12]{ABFP})):

\begin{lemma}
\label{lem:suppfact}  Suppose that $\cL$ is a centreless Lie $\G$-torus of type $\Dl$ and  $\mu\in \Dl$.
Then
\begin{itemize}
\item[(a)] $\G_\mu$ depends only on the length of $\mu$.
\item[(b)] If $\mu\in\Dlind$, then $0\in \G_\mu$ and $-\G_\mu = \G_\mu$.
\item[(c)] If $\mu, \nu\in \Dl$ with $\G_\nu$ and $\G_\mu$ nonempty, then
$\G_\nu - \langle \nu \vert \mu^\vee\rangle \G_\mu \subseteq \G_{\nu -\langle \nu \vert \mu^\vee\rangle \mu}$.
\item[(d)] If $\mu$ has minimum length in $\Dlind$,
then  $\langle \G_\mu\rangle = \G$.
\end{itemize}
\end{lemma}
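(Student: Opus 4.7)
The plan is to derive all four parts from a single construction of ``reflection-like'' automorphisms built from the $\speciallinear_2$-triples supplied by Proposition~\ref{prop:LTbasic}(e). For each $\mu\in \Dl$ and $\sg\in \G_\mu$, I would form
\[
w_\mu^\sg := \exp(\ad e_\mu^\sg)\,\exp(-\ad f_\mu^\sg)\,\exp(\ad e_\mu^\sg).
\]
Both $\ad e_\mu^\sg$ and $\ad f_\mu^\sg$ are locally nilpotent on $\cL$, since each $\mu$-string of roots in $\Dl\cup\zero$ is finite, so $w_\mu^\sg$ is a well-defined Lie algebra automorphism of $\cL$. For $x\in \cL_\nu^\tau$, the $\speciallinear_2$-submodule generated by $x$ sits inside the finite direct sum $V = \bigoplus_k \cL_{\nu+k\mu}^{\tau+k\sg}$, because $e_\mu^\sg$ and $f_\mu^\sg$ shift the bidegree by $\pm(\mu,\sg)$. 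A standard computation in $\speciallinear_2$-representation theory shows that $w_\mu^\sg$ interchanges the $\mu^\vee$-weight spaces of weights $n$ and $-n$ on any finite-dimensional $\speciallinear_2$-module; with $n = \langle\nu\vert\mu^\vee\rangle$, the weight-$(-n)$ component of $V$ is precisely $\cL_{\nu-n\mu}^{\tau-n\sg}$, and so $w_\mu^\sg$ restricts to an isomorphism
\[
\cL_\nu^\tau\ \xrightarrow{\ \sim\ }\ \cL_{\nu-\langle\nu\vert\mu^\vee\rangle\mu}^{\tau-\langle\nu\vert\mu^\vee\rangle\sg}.
\]

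From this, part (c) is immediate: the isomorphism sends nonzero to nonzero, so $\tau\in \G_\nu$ forces $\tau-\langle\nu\vert\mu^\vee\rangle\sg \in \G_{\nu-\langle\nu\vert\mu^\vee\rangle\mu}$. For (a), I would specialise to $\sg=0$ --- always legal for $\mu\in \Dlind$ by (LT2)(i) --- making $w_\mu^0$ a $\G$-graded automorphism that realises the reflection through $\mu$ on root degrees while fixing external degrees. Since $W(\Dl)$ is generated by such reflections and acts transitively on each length class of roots in an irreducible root system, $\G_\nu$ depends only on the length of $\nu$. For (b), (LT2)(i) directly gives $0\in \G_\mu$ whenever $\mu\in \Dlind$, and (LT2)(ii) pairs each nonzero $e_\mu^\sg\in \cL_\mu^\sg$ with a nonzero $f_\mu^\sg\in \cL_{-\mu}^{-\sg}$, so $-\sg\in \G_{-\mu}$, which equals $\G_\mu$ by (a).

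For (d), fix $\mu$ of minimum length in $\Dlind$. Axioms (LT3) and (LT4) together force $\G=\langle\bigcup_{\nu\in\Dl}\G_\nu\rangle$, since any element of $\cL^\sg$ is a sum of iterated brackets of elements of $\cL_{\nu_i}^{\tau_i}$ with $\nu_i\in \Dl$, $\tau_i\in \G_{\nu_i}$ and $\sum\tau_i = \sg$. Hence it suffices to prove $\G_\nu\subseteq\langle\G_\mu\rangle$ for every $\nu\in \Dl$, and for short indivisible $\nu$ this is already (a). For any other $\nu$ I would apply (c) with the roles of the two roots reversed, choosing a short indivisible $\mu'$ with $\langle\mu'\vert\nu^\vee\rangle\ne 0$; this gives
\[
\G_{\mu'} - \langle\mu'\vert\nu^\vee\rangle\,\G_\nu \ \subseteq\ \G_{\mu'-\langle\mu'\vert\nu^\vee\rangle\nu}.
\]
Because $\mu'-\langle\mu'\vert\nu^\vee\rangle\nu$ has the same length as $\mu'$, part (a) collapses both $\G_{\mu'}$ and $\G_{\mu'-\langle\mu'\vert\nu^\vee\rangle\nu}$ to $\G_\mu$, leaving $\langle\mu'\vert\nu^\vee\rangle\,\G_\nu\subseteq \G_\mu-\G_\mu\subseteq\langle\G_\mu\rangle$. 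The main (and really only) obstacle is to arrange $\vert\langle\mu'\vert\nu^\vee\rangle\vert = 1$, so that this inclusion upgrades to $\G_\nu\subseteq\langle\G_\mu\rangle$. This is handled by a short root-system case check: in every irreducible root system the short-to-long Cartan integers lie in $\{0,\pm 1\}$, and in the non-indivisible $\BCr$ case $\nu = 2\ep_i$ one reads off from \pref{pgraph:BCr} that $\mu'=\ep_i$ gives $\langle\mu'\vert\nu^\vee\rangle = 1$. This yields $\G_\nu\subseteq\langle\G_\mu\rangle$ in all cases and completes (d).
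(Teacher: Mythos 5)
Your argument is correct, but it is worth noting that the paper does not actually prove Lemma \ref{lem:suppfact}: it is quoted from \cite[Sec.~3]{Y3} (see also \cite[Lem.~1.1.12]{ABFP}), so what you have produced is a self-contained proof of a statement the authors only cite. Your route -- building the automorphisms $w_\mu^\sg=\exp(\ad e_\mu^\sg)\exp(-\ad f_\mu^\sg)\exp(\ad e_\mu^\sg)$ from the $\speciallinear_2$-triples of (LT2)(ii) and Proposition \ref{prop:LTbasic}(e), and reading off the action on bidegrees -- is exactly the standard mechanism underlying the cited results (and is the same device the paper invokes in Remark \ref{rem:Weyl}), so in substance you are reconstructing the reference rather than finding a new path. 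The details check out: local nilpotence of $\ad e_\mu^\sg$, $\ad f_\mu^\sg$ follows from (LT1) since the $Q$-support is contained in the finite set $\Dl\cup\zero$; the cyclic $\speciallinear_2$-submodule through a bihomogeneous $x$ is finite-dimensional for the same reason (this is the one place where appealing to the ambient sum $\bigoplus_k \cL_{\nu+k\mu}^{\tau+k\sg}$ alone would be insufficient, since a component of the form $\cL_0^{\tau+k\sg}$ is not known a priori to be finite-dimensional -- but your phrasing via the submodule generated by $x$ avoids this); distinct $k$ give distinct $\ad\mu^\vee$-eigenvalues $\langle\nu\mid\mu^\vee\rangle+2k$, so the weight-$(-n)$ component is indeed $\cL_{\nu-n\mu}^{\tau-n\sg}$, giving (c), and with $\sg=0$ (legal by (LT2)(i)) the transitivity of the Weyl group of $\Dl$ on length classes, which is generated by the $s_\mu$ with $\mu\in\Dlind$ even in the $\BCr$ case, gives (a), and then (b). In (d), two small points should be made explicit but are harmless: the case $\G_\nu=\emptyset$ is vacuous, and the existence of a minimal-length indivisible $\mu'$ with $\langle\mu'\mid\nu^\vee\rangle=\pm1$ is the asserted root-system case check (including $\nu=2\ep_i$, $\mu'=\ep_i$ in type $\BCr$), after which $\G_\nu\subseteq\langle\G_\mu\rangle$ and $\G=\langle\bigcup_{\nu\in\Dl}\G_\nu\rangle$ from (LT3)--(LT4) finish the argument.
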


The next lemma is a consequence of  $\speciallinear_2$-theory.

\begin{lemma}
\label{lem:sl2}
If
$\mu,\nu,\mu+\nu\in \Dl$, $\sg\in \G_\mu$, $\tau\in \G_\nu$,  and
$\sg+\tau\in \G_{\mu + \nu}$, then
\begin{equation}
[\cL_\mu^\sg,\cL_\nu^\tau] = \cL_{\mu+\nu}^{\sg+\tau}.
\end{equation}
\end{lemma}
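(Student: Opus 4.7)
By the $(Q\times\G)$-grading of $\cL$, the bracket $[\cL_\mu^\sg,\cL_\nu^\tau]$ lies in $\cL_{\mu+\nu}^{\sg+\tau}$, and by (LT2)(ii) all three spaces involved are one-dimensional. Hence it suffices to show $[e_\mu^\sg,e_\nu^\tau]\neq 0$, which I plan to establish by exploiting the $\speciallinear_2$-triple $\{e_\mu^\sg,\mu^\vee,f_\mu^\sg\}$ (an $\speciallinear_2$-triple by Proposition~\ref{prop:LTbasic}(e)) acting adjointly on $\cL$. Assume for contradiction that $[e_\mu^\sg,e_\nu^\tau]=0$; set $y=e_\nu^\tau$, $z=e_{\mu+\nu}^{\sg+\tau}$, and $n=\langle\nu\mid\mu^\vee\rangle$, so that $[\mu^\vee,y]=ny$, $[\mu^\vee,z]=(n+2)z$, and (by hypothesis) $y,z\ne 0$.

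In the generic case $\nu\notin\bbZ\mu$, the $\mu$-string submodule $V := \bigoplus_{k\in\bbZ}\cL_{\nu+k\mu}^{\tau+k\sg}$ is $\speciallinear_2$-stable, finite-dimensional, and by (LT2)(ii) has every $\mu^\vee$-weight space of dimension at most one. Decomposing $V$ into $\speciallinear_2$-irreducibles, $[e_\mu^\sg,y]=0$ would make $y$ span the highest-weight line of a summand of highest weight $n$; however, $z\ne 0$ forces a second summand of highest weight at least $n+2$, and that summand would also contribute to the weight-$n$ space, contradicting its one-dimensionality.

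The remaining case $\nu\in\bbZ\mu$ reduces, using $\mu,\nu,\mu+\nu\in\Dl$, to $\nu=-2\mu$ or $\nu=\mu$ (the latter requiring $2\mu\in\Dl$). For $\nu=-2\mu$, the fact that $-3\mu\notin\Dl$ gives $[f_\mu^\sg,y]=0$, so $\F y$ would be an $\speciallinear_2$-submodule on which $e_\mu^\sg$ and $f_\mu^\sg$ both act as zero while $\mu^\vee$ acts as the scalar $-4$, violating the relation $[e_\mu^\sg,f_\mu^\sg]=\mu^\vee$. For $\nu=\mu$, one has $[e_\mu^\sg,z]\in\cL_{3\mu}^{2\sg+\tau}=0$ and $[f_\mu^\sg,z]\in\cL_\mu^\tau$ is a scalar multiple $\alpha y$, so Jacobi yields
\[
\alpha\,[e_\mu^\sg,y] \;=\; [e_\mu^\sg,[f_\mu^\sg,z]] \;=\; [\mu^\vee,z] \;=\; 4z,
\]
forcing $z=0$, again a contradiction. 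The principal obstacle is precisely this degenerate case $\nu\in\bbZ\mu$: the $\mu$-string then meets $\cL_0$ in a component of potentially unbounded dimension, so the finite-dimensional weight-multiplicity argument of the generic case does not apply verbatim and must be replaced by the ad hoc submodule and Jacobi computations sketched above.
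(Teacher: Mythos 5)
Your proof is correct and follows essentially the same route as the paper: the identical $\speciallinear_2(\mu,\sg)$-string module $\bigoplus_{k\in\bbZ}\cL_{\nu+k\mu}^{\tau+k\sg}$ with one-dimensional, same-parity weight spaces settles the case $\nu\notin\bbZ\mu$, and the identical bracket computation with $e_{2\mu}^{\sg+\tau}$ (using $3\mu\notin\Dl$ and $[e_\mu^\sg,f_\mu^\sg]=\mu^\vee$) settles $\nu=\mu$. The only cosmetic difference is that the paper removes the subcase $\nu=-2\mu$ by invoking the symmetry of the statement under $(\mu,\sg)\leftrightarrow(\nu,\tau)$, so that it may assume $\nu\in\bbZ\mu$ and $\mu\in\bbZ\nu$, forcing $\nu=\mu$, whereas you dispose of $\nu=-2\mu$ directly; both handlings are valid.
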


\begin{proof}  By assumption we have
\[\cL_\mu^\sg \ne 0,\quad \cL_\nu^\tau\ne 0 \andd \cL_{\mu+\nu}^{\sg+\tau} \ne 0.\]

Suppose first that $\nu\notin\bbZ \mu$. Let
$\cM = \sum_{k\in \bbZ} \cL_{\nu + k\mu}^{\tau+k\sg}$.
Then, by (LT1) and (LT2)(ii), $\cM$ is a finite-dimensional $\speciallinear_2(\mu,\sg)$-module,
where $\speciallinear_2(\mu,\sg)$ is the Lie algebra spanned by $\set{e_\mu^\sg,\mu^\vee,f_{\mu}^{\sg}}$.
Moreover, by (LT2)(ii),  this module has one-dimensional weight spaces,  and the eigenvalues
of $\ad(\mu^\vee)|_\cM$ are integers of the same parity.  Hence by $\speciallinear_2$-theory,
$\cM$ is irreducible and $\ad(e_\mu^\sg)  \cL_\nu^\tau = \cL_{\mu+\nu}^{\sg+\tau}$, proving the desired fact.

So we can assume that $\nu\in\bbZ \mu$ and similarly that  $\mu\in\bbZ \nu$.  Thus, $\nu = \pm \mu$, and, since
$\mu+\nu \in \Dl$, we have $\nu = \mu$.  Now $\ad(e_\mu^\sg) e_{2\mu}^{\sg +\tau} = 0$ by (LT1), so
$\ad(e_\mu^\sg) \ad(f_\mu^\sg) e_{2\mu}^{\sg +\tau} = \ad(\mu^\vee) e_{2\mu}^{\sg +\tau} = 4e_{2\mu}^{\sg +\tau}$.
Therefore $\ad(e_\mu^\sg) \cL_\mu^\tau \ne 0$,  again proving the conclusion.
\end{proof}

\begin{remark}
\label{rem:Weyl}  If $\omega : Q \to Q$ is in the Weyl group of $\Dl$, then there
exists an isograded isomorphism $\psi$ from $\cL$ to $\cL$
such that $\psi_\rgr = \omega$. (To see this,  one extends an inner
automorphism of $\gd$.  See the argument in the proof
of Lemma 3.8 of \cite{AF}.)
\end{remark}

\section[Unitary Lie algebras]{\cm UNITARY LIE ALGEBRAS}
\label{sec:unitarydef}

Throughout this  chapter   we assume that  \emph{$(\cA,-)$ is
an associative algebra with  involution}.

\subsection{The Lie algebras $\fu(X,\xi)$, $\ffu(X,\xi)$, and $
\fsu(X,\xi)$}
\label{subsec:special}

\begin{definition}
\label{def:unitary}
Suppose that $\xi : X \times X \to \cA$  is a hermitian form over
$(\cA,-)$.  To  construct unitary Lie algebras from $\xi$ we will use
an associative algebra with involution $(\cE,*)$ that is determined by $\xi$.
We recall the definition of   $(\cE,*)$, following \cite[Ex.~2.3]{A}, in (a) and (d) below.
\begin{itemize}
\item[(a)]  For $x,y\in X$,  define $E(x,y)\in \End_\cA(X)$ by
\[
E(x,y)z = x.\xi(y,z).
\]
Then   \begin{eqnarray}\xi(E(x,y)z,w) &=& \xi(z,E(y,x)w),
\label{eq:Eadj}
\\
E(x.\al,\,y) &=&  E(x,y.\overline \al), \andd \label{eq:Eprod}\\
E(x,y)E(z,w) &=& E(x. \xi(y,\,z),\,w)\label{eq:Eprod2}
\end{eqnarray}
hold for all $\al \in \cA$ and  $x,y,z,w\in X$.
We set
\[\cE = \fe(X,\xi) := \spann_\F\set{E(x,y) \suchthat x,y\in X},\]
and note that  by \eqref{eq:Eprod2},
$\cE$ is an associative subalgebra of $\End_\cA(X)$; however $\cE$
may not be unital.

\item[(b)]  Let
\begin{equation}  \cU = \fu(X,\xi) := \set{ T \in \End_\cA(X) \suchthat \xi(Tx,y) +
\xi(x,Ty)=0 \ \forall \ x,y\in X}.
\end{equation}
Then $\cU$ is a Lie subalgebra of $\gl_\cA(X)$, and we
say that $\cU$ is the \emph{unitary Lie algebra} of $\xi$.

\item[(c)]  For $x,y\in X$,  set
\begin{equation*}
\label{eq:Udef}
U(x,y) := E(x,y) - E(y,x),
\end{equation*}
and let
 \[\cF = \ffu(X,\xi) := U(X,X),\]
where $U(X,X) = \spann_\F\set{U(x,y) \suchthat x,y\in X}.$
It follows that
\begin{equation}
\label{eq:Uident1}
U(x.\al,\,y) = U(x,\,y.\overline \al) \andd U(x,y) = -U(y,x)
\end{equation}
for $\al\in \cA$ and $x,y\in X$, and, by \eqref{eq:Eadj},  that  $U(X,X)  \subseteq \cU$.
In particular, if $x\in X$ and $a\in \cA_+$, then
\begin{equation}
\label{eq:Uident2}
U(x.a, \, x) =
U(x, \,x.a) = 0.
\end{equation}
Moreover,
\begin{equation}
\label{eq:Uident3}
[T,U(x,y)] = U(Tx,y) + U(x,Ty)
\end{equation}
for $x,y\in X$ and $T\in \cU$,   so that $\cF$ is  an   ideal of the Lie algebra $\cU$,  referred to as
the \emph{finite unitary Lie algebra} of
$\xi$.

\item[(d)] Suppose $\xi$ is nondegenerate. It follows from \eqref{eq:Eadj} that there
is a well-defined linear map $*:\cE \to \cE$ of period 2 such that $E(x,y)^* = E(y,x)$ for $x,y\in X$.
Using \eqref{eq:Eprod} and  \eqref{eq:Eprod2},    it is easy to check that $*$ is an involution of $\cE$. Further,
by  \eqref{eq:Eadj}, we have
\begin{equation}
\label{eq:Eadj2}
\xi(Tx,y) = \xi(x,T^*y)
\end{equation}
for $T\in \cE$ and $x,y\in X$.  It is then clear that
\begin{equation}
\label{eq:Fchar1}
\cF = \set{T\in \cE \suchthat T^* = -T},
\end{equation}
and hence, using \eqref{eq:Eadj2} and nondegeneracy, that
\begin{equation}
\label{eq:Fchar}
\cF = \cU \cap \cE.
\end{equation}

\item[(e)]  Finally,  we let \[\cS = \fsu(X,\xi) = \cF^{(1)},\]
where $\cF^{(1)} = [\cF,\cF]$ denotes the derived algebra of $\cF$.  The ideal  $\cS$ of  $\cU$ is
called  the \emph{special unitary Lie algebra} of $\xi$.
\end{itemize}
\end{definition}

\begin{example}  Suppose that $\xi: X\times X \to \cA$  is a nondegenerate hermitian form
over an associative division algebra with involution $(\cA,-)$.
Then $\cE$ is the algebra of all finite rank endomorphisms in $\End_\cA(X)$ that have an adjoint
relative to $\xi$ \cite[Prop.~1, \S IV.8]{J}.
Thus, by \eqref{eq:Fchar}, $\cF$ is the Lie algebra of all finite rank endomorphisms in $\cU$.
\end{example}

\begin{pgraph}
\label{pgraph:Zaction}
If $\xi : X\times X \to \cA$ is a hermitian form
over $(\cA,-)$, then $\End_\cA(X)$ is a left module for the center  $Z(\cA,-) = Z(\cA) \cap \cA_+$
with the action given by
$(\fz T)(x) := T(x.\fz) = T(x).\fz$ for $\fz\in Z(\cA,-)$,
$T\in \End_\cA(X)$  and $x\in X$.  Then,
\[\fz E(x,y) = E(x.\fz,y) = E(x,y.\fz) \andd
\fz U(x,y) = U(x.\fz,y) = U(x,y.\fz)\]
for $x,y\in \cA$,
$\fz\in Z(\cA,-)$.
\end{pgraph}

\begin{pgraph}
\label{pgraph:gradedunitary} \ptitle{Gradings on $\cE$, $\cF$, $\cS$ and $\cU$}
Suppose that $\xi : X \times X \to \cA$
is a $\G$-graded hermitian form  over $(\cA,-)$.
Since $\cE$ is spanned by homogeneous elements of $\End_\cA^\gr(X)$,
$\cE$ is a $\G$-graded subalgebra of $\End^\gr_\cA(X)$  with
\[\cE^\tau = \textstyle \sum_{\rho + \sg = \tau} E(X^\rho,X^\sg)\]
for $\tau \in \G$.
Similarly,
$\cF$ is a $\G$-graded subalgebra of $\gl^\gr_\cA(X)$  with
\begin{equation}
\label{eq:gradF}
\cF^\tau =\textstyle \sum_{\rho + \sg = \tau} U(X^\rho,X^\sg)
\end{equation}
for $\tau\in \G$; and $\cS$ is a $\G$-graded subalgebra
of~$\cF$.   Moreover,  if $\supp_\G(X)$ is finite or if  $X$ is a finitely generated $\cA$-module,
then $\cU$ is a $\G$-graded subalgebra of
$\gl_\cA(X) = \gl^\gr_\cA(X)$.
\end{pgraph}

\smallskip

\subsection{The BC$_{\boldsymbol r}$-graded unitary Lie algebra $\fbu(X,\xi)$} \quad
\label{subsec:BCunitary} \smallskip

We have introduced three Lie algebras
$\fu(X,\xi)$, $\ffu(X,\xi)$ and $\fsu(X,\xi)$ associated with a hermitian
form $\xi$.  A fourth Lie algebra $\fbu(X,\xi)$ will play a key role
in the proof (but not the statement) of our  structure theorem about centreless Lie tori of type $\BCr$.

In defining $\fbu(X,\xi)$ we make the following assumptions:
\begin{itemize}
\item[{\rm(i)}] $\F$
has characteristic $0$;
\item[{\rm(ii)}]  $\rk\ge 1$;
\item[{\rm(iii)}]  $(\cA,-)$ is an associative algebra with involution over $\F$;
\item[{\rm(iv)}] $\xi : X \times X \to \cA$ is a hermitian form over $ (\cA,-)$
such that $X = X_\hyp \perp X_\an$,
where $X_\hyp$ and $X_\an$ are $\cA$-submodules
of $X$;\footnote{Later when we discuss unitary
Lie tori in Chapter \ref{sec:main}, the
decomposition $X = X_\hyp \perp X_\an$ will be a  Witt decomposition of $\xi$.}
\item[{\rm(v)}]  $X_\hyp$ has an $\cA$-basis
$\set{x_i}_{i=1}^\twor$  such that
\[\xi(x_i,x_j) = \delta_{i \prmj}\]
for $1\le i,j\le \twor$, where
\begin{equation*}
\label{eq:iopp}  \prmi = \twor+1-i;   \ \ \andd
\end{equation*}
\item[{\rm(vi)}]  $X_\an$ contains an element $\vz$ such that $\xi(\vz,\vz) = 1$.
\end{itemize} \smallskip

Let
\[\cU = \fu(X,\xi),\quad \cF = \ffu(X,\xi),  \andd \cS = \fsu(X,\xi)\]
as in Definition \ref{def:unitary}, and set
\[h_i = U(x_i,x_\prmi) = E(x_i,x_\prmi) - E(x_\prmi,x_i)\]
in $\cF$ for $1\le i \le \rk$.     It is straightforward to verify that
\begin{equation}
\label{eq:hi}
[U(x_i,\vz),U(\vz,x_\prmi)] = h_i,
\end{equation}
and hence $h_i\in \cS$ for \  $1\le i \le \rk$.  Also,
if $1\le i \le \rk$, then
\begin{equation}
\label{eq:hiaction}
h_i v= 0 \text{ for } v\in X_\an
\andd
h_i x_j = \left\{
\begin{array}{ll}
   \delta_{i j}x_j & \  \hbox{if \  $1\le j \le \rk$} \\
   -\delta_{i\prmj}x_j & \  \hbox{if  \  $\rk+1\le j \le \twor$} \\
\end{array}. \right.
\end{equation}

Now let
\[\hd = \textstyle\bigoplus_{i=1}^\rk\F h_i = \bigoplus_{i=1}^\rk \F U(x_i,x_\prmi).\]
By \eqref{eq:hiaction}, $\hd$ is an abelian subalgebra of $\cS$ with basis $\set{h_1,\dots,h_\rk}$,
and $\hd$ acts diagonally on $X$ under the natural action.
 Indeed, for $\mu\in\hd^*$,   suppose
\[X_\mu = \set{x\in X \suchthat hx = \mu(h)x \text{ for } h\in \hd}\]
is  the $\mu$-\emph{weight space} of $\hd$ in $X$.
Let $\set{\ep_1,\dots,\ep_\rk}$  be the dual basis in $\hd^*$ of $\set{h_1,\dots,h_\rk}$, and,
as in \pref{pgraph:BCr}, let $\ep_{\bar\imath} = -\ep_i$ for $1\le i \le \rk$.
Then
\begin{equation}
\label{eq:Xweightdecomp}
X =   \textstyle \big(\bigoplus_{i=1}^{2\rk} X_{\ep_i}\big)\oplus X_0,
\end{equation}
\begin{equation}
\label{eq:Xweightspace}
X_{\ep_i} = x_i.\cA \ \ \hbox{\rm for } \ \ 1\le i \le \twor,   \andd   X_0 =  X_\an.
\end{equation}

Let
\[Q = \bbZ \ep_1\oplus \dots \oplus \bbZ \ep_\rk\]
in $\hd^*$.  By \eqref{eq:Xweightdecomp}, $X$ is $Q$-graded as an $\cA$-module
(if we assign the trivial $Q$-grading $\cA = \cA_0$ to $\cA$), and we have
\[\supp_Q(X) = \set{0}\cup \set{\ep_i \suchthat 1\le i \le 2\rk}.\]
Since $\supp_Q(X)$ is finite,  it follows that the Lie algebra
$\gl_\cA(X) = \bigoplus_{\mu\in Q}\gl_\cA(X)_\mu$
is $Q$-graded with
\begin{equation*}
\label{eq:QgradeEnd}
\gl_\cA(X)_\mu = \set{ T\in \gl_\cA(X) \suchthat TX_\nu \subseteq X_{\mu+\nu} \text{ for } \nu\in Q}
\end{equation*}
(see \pref{pgraph:graded}~(g)).
Moreover, $\xi$ is $Q$-graded (again assigning the trivial $Q$-grading to $\cA$), so
\[\text{$\cU$, $\cF$,  and $\cS$ are $Q$-graded subalgebras of $\gl_\cA(X)$}\]
(see \pref{pgraph:gradedunitary}).
Notice that  $\supp_Q(\gl_\cA(X)) \subseteq \set{\mu-\nu \suchthat \mu, \nu \in \supp_Q(X)}$, so we have
$\supp_Q(\gl_\cA(X)) \subseteq \Dl \cup \set{0}$, where
\[
\Dl = \set{\ep_i\mid 1  \le i \le 2\rk}\cup\{\ep_i+\ep_j\mid 1\le i,j\le 2\rk,\ j\ne \prmi\}
\]
Thus the supports of $\cS$, $\cF$, and $\cU$ are also contained in $\Dl\cup\set{0}$.
Observe also  that $\Dl$ is a root system of type $\BCr$ in $\hd^*$, and $Q = Q(\Dl)$ is the root lattice of $\Dl$.

We are now ready to  introduce  the Lie algebra $\fbu(X,\xi)$.

\begin{definition}
\label{def:UBC} Let
\[\cB = \fbu(X,\xi) := \langle \cU_\mu : \mu\in \Dl\rangle_{\text{alg}}
= \sum_{\mu\in \Dl}\cU_\mu +  \sum_{\mu\in \Dl}[\cU_\mu,\cU_{-\mu}].
\]
Then $\cB$ is a $Q$-graded ideal of $\cS$  which we call
the \emph{$\BCr$-graded unitary Lie algebra} determined by $\xi$.
\end{definition}

\begin{pgraph}
\label{pgraph:Qgrade}
The Lie algebras
$\cB$, $\cS$   and $\cF$ are $Q$-graded ideals of $\cU$ and
\[\hd \subseteq \cB \subseteq \cS \subseteq \cF \subseteq \cU\]
by \eqref{eq:hi}.  So
the $Q$-gradings on the Lie algebras $\cB$, $\cS$, $\cF$, and $\cU$
are the \emph{root gradings relative to the adjoint action of $\hd$}.
\end{pgraph}

\begin{pgraph}  It should be noted that, unlike $\cF$ and $\cS$,
the algebra $\cB$   depends not only on $(\cA,-)$, $X$ and $\xi$,
but also on a decomposition $X = X_\hyp \perp X_\an$, an
$\cA$-basis $\set{x_i}_{i=1}^{\twor}$ for $X_\hyp$, and a distinguished element $\vz\in X_\an$.
However, for simplicity we have suppressed this in the notation
$\fbu(X,\xi)$.
\end{pgraph}

The terminology in Definition \ref{def:UBC} is justified by the following
theorem that comes from  \cite{ABG}.

\begin{theorem}
\label{thm:BCgraded}  Let $\F$ be a field of characteristic 0.
\begin{itemize}
\item[ {\rm (a)}] Suppose   $\rk\ge 1$;
$\xi : X \times X \to \cA$ is a hermitian form
over an associative algebra with involution $(\cA,-)$ such
that $X = X_\hyp \perp X_\an$ where $X_\hyp$ and $X_\an$ are $\cA$-submodules of $X$;
$X_\hyp$ has an
$\cA$-basis $\{x_i\}_{i=1}^{\twor}$  such that  $\xi(x_i,x_j) =
\delta_{i \prmj}$  for all $i,j =1,\dots,\twor$, where $\bar \imath = 2r+1-i$;  and
there is an element  $\vz \in X_\an$ with $\xi(\vz,\vz) = 1$.
Then $\cB = \fbu(X,\xi)$  is a $\BCr$-graded Lie algebra with grading pair $(\gd,\hd)$,
where
\begin{gather}\label{eq:UBCgradesub}
\gd =
\Bigg(\bigoplus_{1 \le i < j \le \twor} \F\, U(x_i,x_j)\Bigg)
\bigoplus
\Bigg(\bigoplus_{i=1}^\twor \F\, U(x_i,\vz)\Bigg),\\
\label{eq:UBChsub}  \hd =
\bigoplus_{i=1}^\rk \F\, U(x_i,x_\prmi).
\end{gather}

\item[{\rm(b)}]  Conversely, if $\rk\ge 3$ and $\cL$ is a $\BCr$-graded Lie algebra,
then $\cL/Z(\cL)$ is isomorphic to $\cB/Z(\cB)$ for some
$\BCr$-graded unitary Lie algebra  $\cB$ as in~\hbox{\rm(a)}.
More precisely, if $\cL$ is a $\BCr$-graded Lie algebra for $\rk \geq 3$
with grading pair $(\gd_\cL,\hd_\cL)$ then there exists
a $\BCr$-graded unitary Lie algebra $\cB$ (as in \hbox{\rm(a)})  and a
Lie algebra isomorphism $\psi : \cL/Z(\cL) \to
\cB/Z(\cB)$ so that $\psi$ maps the canonical image
of $(\gd_\cL,\hd_\cL)$ in $\cL/Z(\cL)$ onto the canonical image of
$(\gd,\hd)$ in $\cB/Z(\cB)$, where $\gd$ and $\hd$  are defined by \eqref{eq:UBCgradesub}
and \eqref{eq:UBChsub}.
\end{itemize}
\end{theorem}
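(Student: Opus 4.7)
The plan is to treat parts (a) and (b) separately.

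For part (a), I would verify axioms (RG1)--(RG3) of Definition~\ref{def:BCgraded} in turn. Axiom (RG3) is immediate from the definition $\cB = \langle \cU_\mu : \mu \in \Dl\rangle_{\alg}$. Axiom (RG2) is already established in the discussion preceding the theorem: the weight decomposition \eqref{eq:Xweightdecomp} forces $\supp_Q(\gl_\cA(X)) \subseteq \Dl \cup \set{0}$, so the same holds for its subalgebra $\cB$.

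The bulk of the work in (a) is (RG1): showing that $\gd$ is split simple of type $\Br$ with splitting Cartan $\hd$ and root system $\rtid{\Dlind}$. The idea is to exploit the natural action of $\gd$ on the $(2r+1)$-dimensional $\F$-subspace $Y := \F x_1 + \cdots + \F x_{2r} + \F v_0$ of $X$. A direct computation using \eqref{eq:Eprod}--\eqref{eq:Eprod2} shows that each generator $U(x_i,x_j)$ and $U(x_i,v_0)$ preserves $Y$; that $\xi|_{Y \times Y}$ is an $\F$-valued nondegenerate symmetric bilinear form of Witt index $r$; and that every element of $\gd$ acts on $Y$ skew-symmetrically with respect to this form. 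This yields a Lie algebra homomorphism $\rho : \gd \to \mathfrak{so}(Y,\xi|_Y)$, the latter being split simple of type $\Br$. Writing $\rho(U(x_i,x_j))$ and $\rho(U(x_i,v_0))$ as explicit matrices in the basis $x_1,\dots,x_{2r},v_0$ shows they are $\F$-linearly independent and span a space of dimension $\binom{2r}{2} + 2r = r(2r+1) = \dim \mathfrak{so}(Y,\xi|_Y)$. Hence $\rho|_\gd$ is an isomorphism, the direct sum in \eqref{eq:UBCgradesub} is genuine, and under this identification $\hd$ becomes a splitting Cartan of $\mathfrak{so}(Y,\xi|_Y)$ with root system $\rtid{\Dlind}$; this gives (RG1) and the root-space decomposition \eqref{eq:Lroot}.

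For part (b), my approach is to invoke the coordinatization theorem for $\BCr$-graded Lie algebras with $\rk \ge 3$ developed in \cite[Chap.~2--3]{ABG}. Given such an $\cL$ with grading pair $(\gd_\cL,\hd_\cL)$, one extracts from the brackets among the root spaces of $\cL$ under $\ad\hd_\cL$ an associative algebra with involution $(\cA,-)$ together with compatible data---an $\cA$-module $X$ with Witt decomposition $X = X_\hyp \perp X_\an$, an $\cA$-basis $x_1,\dots,x_{2r}$ of $X_\hyp$ satisfying $\xi(x_i,x_j) = \delta_{i\bar\jmath}$, an anisotropic $v_0 \in X_\an$ with $\xi(v_0,v_0) = 1$, and a hermitian form $\xi$ consistent with the bracket in $\cL$. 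The $\cB = \fbu(X,\xi)$ built from this data via part (a) then satisfies $\cB/Z(\cB) \cong \cL/Z(\cL)$ via an isomorphism $\psi$ carrying the canonical image of $(\gd_\cL,\hd_\cL)$ onto that of $(\gd,\hd)$.

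The hard part is clearly (b): the reconstruction of $(\cA,-)$ and $\xi$ from the Lie data of $\cL$ is the substantive content of \cite[Chap.~2--3]{ABG} and rests on a detailed case analysis of the root-space brackets and the identification of an associative multiplication compatible with the Lie structure. By contrast, (a) is essentially a direct verification once the auxiliary $(2r+1)$-dimensional representation on $Y$ is adopted.
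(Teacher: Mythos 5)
Your proposal is correct in outline, but it only coincides with the paper on part (b). The paper disposes of \emph{both} parts by a single translation of notation into the setting of \cite{ABG}: setting $C = (\vz.\cA)^\perp$ in $X_\an$ and identifying $X_\hyp\oplus \vz.\cA$ with $\cA^{\twor+1}$ via the ordered basis $x_1,\dots,x_\rk,\vz,x_{\rk+1},\dots,x_{\twor}$, so that $X = \cA^{\twor+1}\oplus C$ and $\xi = \omega_{\twor+1}\perp(-\chi)$; with this dictionary, (a) and the first statement of (b) are precisely \cite[Thm.~3.10]{ABG}, and the sharper statement in (b) (that $\psi$ carries the image of $(\gd_\cL,\hd_\cL)$ to that of $(\gd,\hd)$) is read off from the proof of that theorem rather than its statement. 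Your treatment of (b) is the same citation-based argument. For (a), however, you propose a self-contained verification of (RG1)--(RG3) via the action on the $(2r+1)$-dimensional $\F$-subspace $Y = \F x_1+\cdots+\F x_{\twor}+\F\vz$, identifying $\gd$ with the split orthogonal algebra $\mathfrak{so}(Y,\xi|_Y)$ of type $\Br$. This is a legitimate alternative: it makes the easy direction independent of the machinery of \cite{ABG}, at the cost of an explicit matrix computation that the citation renders unnecessary (and which is in effect already contained in \cite{ABG}).

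One step in your sketch of (a) needs to be made honest: you speak of a Lie algebra homomorphism $\rho:\gd\to\mathfrak{so}(Y,\xi|_Y)$, but a priori $\gd$ is only an $\F$-span, and its closure under brackets is part of what must be proved. The standard repair is to work with the Lie subalgebra of $\fu(X,\xi)$ generated by the elements $U(x_i,x_j)$ and $U(x_i,\vz)$: every element of it preserves $Y$, annihilates $\set{v\in X_\an \suchthat \xi(\vz,v)=0}$ (which together with $\vz.\cA$ and $X_\hyp$ spans $X$, since $v-\vz.\xi(\vz,v)$ always lies in it), and is $\cA$-linear, hence is determined by its restriction to $Y$. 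So restriction to $Y$ is injective on this generated subalgebra; since the restrictions of your $r(2r+1)$ spanning elements are linearly independent and exhaust $\mathfrak{so}(Y,\xi|_Y)$, the generated subalgebra coincides with the span $\gd$, the sum in \eqref{eq:UBCgradesub} is direct, and $\gd$ is split simple of type $\Br$ with splitting Cartan $\hd$ and root system $\rtid{\Dlind}$. With that point supplied (and with $\gd\subseteq\cB$, which follows from \eqref{eq:hi} and the fact that $U(x_i,x_j)\in\cU_{\ep_i+\ep_j}$ and $U(x_i,\vz)\in\cU_{\ep_i}$ for $j\ne\prmi$), your verification of (RG1), together with the support computation for (RG2) and the definition of $\cB$ for (RG3), does yield (a).
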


\begin{proof} The results of \cite{ABG} provide the proof of this theorem, but a translation of
notation is required to apply them.
Let $C = (\vz.\cA)^\perp$ in $X_\an$, and identify
$X_\hyp\oplus \vz.\cA$ with $\cA^{2\rk +1}$ using the ordered $\cA$-basis
$\set{x_1,\dots,x_\rk,\vz ,x_{\rk+1},\dots,x_{2\rk}}$ for $X_\hyp\oplus \vz.\cA$.
Then we have $X =  \cA^{2\rk +1} \oplus C$ and $\xi = \omega_{2\rk+1}\perp(-\chi)$,
where $\omega_{2\rk+1}$ (resp.~$-\chi$) is the restriction of $\xi$
to $\cA^{2\rk +1}$ (resp.~$C$).
With this translation,
(a) and the first statement in (b) comprise \cite[Thm.~3.10]{ABG}.
The last (more explicit) statement in (b) follows from the proof of
the same theorem. (See Paragraph 2.5, Theorem 2.48, Paragraph 2.50,  and
the proof of Proposition 3.9 in \cite{ABG}.)
\end{proof}

\begin{remark}
\label{rem:rootmod}
Suppose that $\cB = \fbu(X,\xi)$ and $\hd$ are as in Theorem
\ref{thm:BCgraded}~(a), and let $\pi : \cB \to \cB/Z(\cB)$
denote the canonical projection.  Now $\hd\cap Z(\cB) = 0$
(since
$0\ne U(\vz,x_i) \in U(X_0,X_{\ep_i}) \subseteq
\cU_{\ep_i} \subseteq \cB$,   and hence any element of
$\hd\cap Z(\cB)$ lies in the kernel of $\ep_i$  for all $i=1,\dots,\rk$).
Thus, $\pi|_\hd$
is a linear isomorphism of $\hd$ onto $\pi(\hd)$.  Consequently, the inverse dual of this map
is a linear isomorphism  of $\hd^*$ onto $\pi(\hd)^*$.  Later we will use
this map to identify $\hd^*$ and $\pi(\hd)^*$.  In this way,
the induced $Q$-grading on $\pi(\cB)$ can be regarded as the root space
decomposition relative to the adjoint action of $\pi(\hd)$.
\end{remark}

The next two results provide us with  detailed information about the root spaces of
$\cB$, $\cS$, $\cF$,  and $\cU$ relative to the adjoint action of $\hd$.

\begin{proposition}
\label{prop:Qgrade}\
\begin{itemize}
\item[(a)]  If $0 \ne \mu\in Q$, then $\cB_\mu = \cS_\mu = \cF_\mu = \cU_\mu$.
\item[(b)] If $\nu\in Q$, then $\cF_\nu = \sum_{\lambda,\mu \in Q,\ \lambda+\mu = \nu}U(X_\lambda,X_\mu)$.
\end{itemize}
\end{proposition}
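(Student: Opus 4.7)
The plan is to prove \hbox{(b)} first as a direct application of the graded-module framework of Chapter \ref{sec:prelim}, and then to deduce \hbox{(a)} from \hbox{(b)} together with the definition of $\cB$ and the support bound on $\cU$.

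For \hbox{(b)}, I would first observe that the decomposition $X = \bigl(\bigoplus_{i=1}^{2r} x_i.\cA\bigr) \oplus X_\an$ of \eqref{eq:Xweightdecomp}--\eqref{eq:Xweightspace} makes $X$ into a $Q$-graded $\cA$-module when $\cA$ is equipped with the trivial $Q$-grading $\cA = \cA_0$: each summand $x_i.\cA$ is an $\cA$-submodule, so $X_{\ep_i}.\cA^0 \subseteq X_{\ep_i}$ and $X_0.\cA^0 \subseteq X_0$. I would then check that with this $Q$-grading the hermitian form $\xi$ is itself $Q$-graded in the sense of \pref{pgraph:graded}\,(f): on $X_\hyp$ the relation $\xi(x_i,x_j) = \delta_{i\prmj}$ lies in $\cA_0$ exactly when $\ep_i+\ep_j = 0$ (i.e.~$j = \prmi$); on $X_\an$ the values already lie in $\cA_0$; and the orthogonality $X_\hyp \perp X_\an$ kills the remaining cross terms. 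Once $\xi$ is known to be $Q$-graded, the formula in \hbox{(b)} is exactly the description of $\cF^\tau$ in \eqref{eq:gradF} from \pref{pgraph:gradedunitary}, applied to this $Q$-grading.

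For \hbox{(a)}, the containments $\cB_\mu \subseteq \cS_\mu \subseteq \cF_\mu \subseteq \cU_\mu$ follow at once from the chain of inclusions $\cB \subseteq \cS \subseteq \cF \subseteq \cU$ noted in \pref{pgraph:Qgrade} together with the fact that each of these Lie algebras is a $Q$-graded subalgebra. For the reverse inclusion $\cU_\mu \subseteq \cB_\mu$ when $\mu \neq 0$, I would argue as follows: if $\cU_\mu = 0$ the claim is trivial, and otherwise the support bound $\supp_Q(\cU) \subseteq \Dl \cup \set{0}$ (established in the paragraph preceding Definition \ref{def:UBC}) forces $\mu \in \Dl$. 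But then $\cU_\mu$ is literally one of the summands appearing in the definition
\[
\cB = \sum_{\nu\in \Dl}\cU_\nu + \sum_{\nu\in \Dl}[\cU_\nu,\cU_{-\nu}],
\]
so $\cU_\mu \subseteq \cB$; since $\cB$ is $Q$-graded and $\cU_\mu$ is $\mu$-homogeneous, this gives $\cU_\mu \subseteq \cB_\mu$ as required.

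There is no real obstacle here: both parts are essentially formal consequences of the definitions and of the general graded-module setup in \pref{pgraph:graded} and \pref{pgraph:gradedunitary}. The only point where one must be slightly careful is verifying that $\xi$ is $Q$-graded with $\cA$ in trivial degree, which amounts to the compatibility $\xi(x_i,x_j) = \delta_{i\prmj}$ being supported on indices with $\ep_i + \ep_j = 0$; this is precisely the reason the weight-space grading was set up via the dual basis of $\set{h_1,\dots,h_r}$ in the first place.
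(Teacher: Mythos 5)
Your proof is correct and is essentially the paper's own argument: part (b) is exactly the application of \eqref{eq:gradF} to the $Q$-grading on $X$ (with $\cA$ in trivial degree), and part (a) comes down to $\cU_\mu \subseteq \cB \subseteq \cU$ for $0 \ne \mu \in Q$, using the support bound $\supp_Q(\cU) \subseteq \Dl \cup \set{0}$ and the definition of $\cB$. You merely spell out the verifications (that $\xi$ is $Q$-graded, and the case split $\mu \in \Dl$ versus $\cU_\mu = 0$) that the paper leaves implicit.
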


\begin{proof}
Part  (a) is clear since $\cU_\mu \subseteq \cB \subseteq \cU$
for $0\ne \mu\in Q$.  Part (b) follows from~\eqref{eq:gradF}.
\end{proof}

\begin{proposition}
\label{prop:Qgradeexp}  We have
\begin{equation}
\label{eq:FU0}
\cF_{0}= \sum_{i=1}^\rk U(x_i.\cA,x_\prmi) + U(X_\an,X_\an).
\end{equation}
Moreover,  for each  $\mu\in \Dl$,
a general element of $\cF_\mu\ (= \cU_\mu =\cS_\mu = \cB_\mu)$
can be expressed uniquely in the form indicated below.
\begin{tabbing}
\hspace{.1truein}
\=\hspace{.25truein}
\=\hspace{2.25 truein}
\=\kill
\> \hbox{\rm (a)}
\> $\mu=\ep_i+\ep_j$, \ $1\le i \ne j \le 2\rk$, $i\ne \bar \jmath$:
\> \quad $U(x_i.\al,x_j)$, \ $\al\in\cA$\\
\> \hbox{\rm (b)}
\> $\mu=2\ep_i$, \ $1\le i  \le 2\rk$:
\> \quad $U(x_i.b,x_i)$, \ $b\in\cA_-$\\
\> \hbox{\rm (c)}
\> $\mu=\ep_i$, \ $1\le i  \le 2\rk$:
\> \quad $U(v,x_i)$, \ $v\in X_\an$.
\end{tabbing}
\end{proposition}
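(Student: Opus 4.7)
The plan is to combine Proposition \ref{prop:Qgrade}\,(b) with the weight decomposition \eqref{eq:Xweightdecomp}--\eqref{eq:Xweightspace} of $X$ and analyse each root space $\cF_\mu$ case by case. Since $\supp_Q(X) = \set{0}\cup\set{\ep_i\suchthat 1\le i\le 2r}$ and $\ep_1,\dots,\ep_\rk$ are linearly independent in $Q$, the pairs $(\lambda,\nu)\in\supp_Q(X)\times\supp_Q(X)$ with $\lambda+\nu=\mu$ can be listed explicitly. For $\mu=\ep_i+\ep_j$ with $j\ne\bar\imath$, only $(\ep_i,\ep_j)$ and its transpose occur; for $\mu=2\ep_i$, only $(\ep_i,\ep_i)$; for $\mu=\ep_i$, only $(0,\ep_i)$ and $(\ep_i,0)$; and for $\mu=0$, the pair $(0,0)$ together with the $2r$ pairs $(\ep_k,\ep_{\bar k})$, $1\le k\le 2r$.

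The existence parts of (a)--(c) and the formula for $\cF_0$ then follow by using the identities \eqref{eq:Uident1} to absorb right-multiplications into the left argument, via $U(x.\al,\,y.\be) = U(x.(\al\bar\be),\,y)$, together with $U(x,y) = -U(y,x)$ to identify symmetric pairs. This immediately collapses the sum supplied by Proposition \ref{prop:Qgrade}\,(b) into a single space of the stated form. In case (b) an extra use of \eqref{eq:Uident1} gives $U(x_i.b,x_i) = -U(x_i.\bar b, x_i)$, whence the $\cA_+$ part of $b$ contributes nothing and one may restrict to $b\in\cA_-$. For $\cF_0$, the pairs $(\ep_k,\ep_{\bar k})$ with $k>\rk$ are absorbed into those with $k\le \rk$ by $U(x,y) = -U(y,x)$, producing the displayed formula \eqref{eq:FU0}.

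For uniqueness in each of (a)--(c), I would evaluate the claimed canonical form on a single well-chosen basis vector of $X_\hyp$ (namely $x_\prmj$ in case (a), $x_\prmi$ in cases (b) and (c)) and use the relations $\xi(x_i,x_\prmj)=\delta_{ij}$ and the orthogonality $\xi(X_\an,X_\hyp)=0$. These computations yield $U(x_i.\al,x_j)x_\prmj = x_i.\al$ in case (a), $U(v,x_i)x_\prmi = v$ in case (c), and $U(x_i.b,x_i)x_\prmi = x_i.(b-\bar b) = 2x_i.b$ in case (b). Since $\al\mapsto x_i.\al$ and $v\mapsto v$ are injective, the natural parametrizing map is injective in each case.

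The only step that requires a moment's care is case (b): both the collapse to $b\in\cA_-$ and the recovery of $b$ from $U(x_i.b,x_i)x_\prmi = 2x_i.b$ use that $2$ is invertible, so the standing assumption $\characteristic\,\F\ne 2$ is essential. Otherwise the argument is a straightforward bookkeeping exercise in weight-space enumeration combined with the identities \eqref{eq:Uident1}--\eqref{eq:Uident2}, with Proposition \ref{prop:Qgrade}\,(b) providing the one nontrivial structural input.
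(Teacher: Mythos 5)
Your proposal is correct and follows essentially the same route as the paper: combine Proposition \ref{prop:Qgrade}\,(b) with the weight-space description \eqref{eq:Xweightspace} and the identities \eqref{eq:Uident1}--\eqref{eq:Uident2} to get \eqref{eq:FU0} and the existence of the canonical forms, then verify uniqueness by evaluating on a suitable basis vector (the paper's own sample computation is exactly your case (c), $U(v,x_i)x_\prmi = v$). Your extra observations — the absorption identity $U(x.\al,y.\be)=U(x.(\al\bar\be),y)$, the computation $U(x_i.b,x_i)x_\prmi = 2x_i.b$, and the role of $\characteristic\F\ne 2$ in case (b) — are accurate elaborations of details the paper leaves implicit.
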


\begin{proof}  Applying   Proposition \ref{prop:Qgrade}\,(b), \eqref{eq:Xweightspace},
\eqref{eq:Uident1} and \eqref{eq:Uident2}, we obtain \eqref{eq:FU0} and
the existence of the expressions in (a)--(c).
Also, the uniqueness is easily checked.  For example,
in (c), suppose that $U(v,x_i) = 0$, where $v\in X_\an$.  Then we have
$0 = U(v,x_i)x_\prmi = v$.
\end{proof}

The
following identities, together with Proposition \ref{prop:Qgradeexp},
allow us to recover $(\cA,-)$, $X$ and $\xi$ from the Lie algebra
$\cB$ and its root space decomposition.  Each of these identities
can be checked directly using  \eqref{eq:Uident1} and \eqref{eq:Uident3}.

\begin{proposition}
\label{prop:identities} Suppose that $\al,\beta\in\cA$, $v,w\in X_\an$ and
$1\le i,j,k \le \twor$.
Then,
\begin{alignat}{2}
\label{eq:Uskew}
U(x_i.\al,x_j) &= - U(x_j.\overline \al,x_i)
\\
\label{eq:Uident4}
[U(x_i.\alpha ,x_j) , U(x_\prmj.\beta,x_k)]  &=\phantom{-}
U(x_i.\alpha\beta,x_k)
&&\quad\text{if $k \ne \prmi, \prmj$},
\\
\label{eq:Uident5}
[U(x_i.\alpha,x_j) , U(v,x_\prmi)] &=-U(v.\al,x_j),
&&\quad\text{if $j\ne i$},
\\
\label{eq:Uident6}
[U(v,x_i), U(w,x_j)] &=  -U(x_i.\xi(v,w),x_j)
&&\quad\text{if $j \ne \prmi$}.
\end{alignat}
\end{proposition}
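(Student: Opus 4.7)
The plan is to verify each of the four identities by direct computation from the two basic inputs \eqref{eq:Uident1} and \eqref{eq:Uident3}, together with the defining formula $U(x,y)z = x.\xi(y,z) - y.\xi(x,z)$ and the two structural facts $\xi(x_i,x_j)=\delta_{i\prmj}$ and $\xi(X_\hyp,X_\an)=0$. Note that every $U(x,y)$ lies in $\cU$ by the observation after \eqref{eq:Uident1}, so \eqref{eq:Uident3} applies in all our applications. Throughout, the implicit constraint from part~(a) of Proposition~\ref{prop:Qgradeexp}, namely $i\ne\prmj$ whenever $U(x_i.\al,x_j)$ appears, will play a role.

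The first identity \eqref{eq:Uskew} is a one-line consequence of \eqref{eq:Uident1}: apply the first relation there to write $U(x_i.\al,x_j)=U(x_i,x_j.\overline\al)$, then apply the skew-symmetry to get $-U(x_j.\overline\al,x_i)$.

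For the remaining three identities I would set $T$ equal to the first bracket factor and apply \eqref{eq:Uident3}, then compute $T$ evaluated at the two arguments of the second factor. For \eqref{eq:Uident4}, with $T=U(x_i.\al,x_j)$, a routine calculation gives $T(x_\prmj)=x_i.\al-\delta_{i\prmj}\,x_j.\overline\al$ and $T(x_k)=\delta_{j\prmk}\,x_i.\al-\delta_{i\prmk}\,x_j.\overline\al$. The hypothesis $k\ne\prmi,\prmj$ kills $T(x_k)$, and because $U(x_i.\al,x_j)$ is an expression as in Proposition~\ref{prop:Qgradeexp}(a) we have $i\ne\prmj$, so the $\delta_{i\prmj}$ term drops out of $T(x_\prmj)$ as well. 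This reduces the bracket to $U(x_i.\al\beta,x_k)$ after one application of sesquilinearity. For \eqref{eq:Uident5}, with $T=U(x_i.\al,x_j)$, one finds $T(v)=0$ (since $X_\an\perp X_\hyp$) and $T(x_\prmi)=-x_j.\overline\al$ (using $j\ne i$ to kill the other term), after which \eqref{eq:Uident1} converts $U(v,x_j.\overline\al)$ into $U(v.\al,x_j)$ with the expected sign. For \eqref{eq:Uident6}, with $T=U(v,x_i)$, the computation yields $T(w)=-x_i.\xi(v,w)$ and $T(x_j)=\delta_{i\prmj}\,v$, and the hypothesis $j\ne\prmi$ kills the latter.

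The main obstacle is not conceptual but combinatorial: keeping careful track of the Kronecker deltas $\delta_{i\prmj}$, $\delta_{j\prmk}$, etc.\ that arise from evaluating $\xi$ on basis vectors, and verifying that the stated side conditions (together with the inherited hypothesis $i\ne\prmj$ on type~(a) expressions) are exactly enough to eliminate each unwanted term. Once that bookkeeping is done, each identity falls out in one line from \eqref{eq:Uident3}.
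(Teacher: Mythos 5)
Your overall route is exactly the paper's: the paper's entire proof of Proposition \ref{prop:identities} is the remark that each identity can be checked directly from \eqref{eq:Uident1} and \eqref{eq:Uident3}, and your verifications of \eqref{eq:Uskew}, \eqref{eq:Uident5} and \eqref{eq:Uident6} are a correct execution of that plan (including the observations $T(v)=0$ from $\xi(X_\hyp,X_\an)=0$ and $T(x_j)=\delta_{i\prmj}\,v$, killed by $j\ne\prmi$).

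In \eqref{eq:Uident4}, however, the bookkeeping you yourself flag as the crux has a slip. Since $\xi(x_i,x_j)=\delta_{i\prmj}$ and the bar is an involution on the index set, $\xi(x_i,x_\prmj)=\delta_{ij}$; hence with $T=U(x_i.\al,x_j)$ one gets $T(x_\prmj)=x_i.\al-\delta_{ij}\,x_j.\overline\al$, not $x_i.\al-\delta_{i\prmj}\,x_j.\overline\al$. So the unwanted term survives precisely when $i=j$, and the condition $i\ne\prmj$ that you invoke does not remove it. This is not a vacuous point: for $i=j$, $k\ne i,\prmi$ and $\al=\beta=1$ the left-hand side of \eqref{eq:Uident4} is $[U(x_i,x_i),U(x_\prmi.\beta,x_k)]=0$ while the right-hand side is $U(x_i,x_k)\ne 0$, so a hypothesis excluding $i=j$ really is needed (in the paper it is implicit; everywhere \eqref{eq:Uident4} is applied, e.g.\ in Proposition \ref{prop:centroid} and in \pref{pgraph:canform}, the indices $i,\prmi,j,\prmj,k,\prmk$ are distinct). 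The repair is immediate: the type-(a) normal form of Proposition \pref{prop:Qgradeexp} that you appeal to carries both $i\ne j$ and $i\ne\prmj$, and it is the former, not the latter, that annihilates the stray term. With that one correction (right delta, right side condition), your argument is complete and matches the paper's intended computation.
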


\begin{remark}
\label{rem:idB} Suppose that $\rk\ge 2$.
It follows
from Proposition 4.2.7, identity
\eqref{eq:Uident5} (with $\al = 1$) and identity \eqref{eq:Uident6} (with $v= \vz$,
$w = \vz.\al$ and $j=i$)
that the ideal of $\cB$ generated by
$\set{\cB_{\ep_i+\ep_j} \suchthat 1\le i,j\le 2\rk,\ i,j,\prmi,\prmj \text{ distinct}}$
contains $\cB_\mu$ for all $\mu\in \Dl$ and hence equals $\cB$.
\end{remark}

For use in the construction of extended affine Lie algebras (see  Section \ref{subsec:EALA}),
we now  determine the centroid of the $\BCr$-graded unitary
Lie algebra $\cB$.
This has been done previously in \cite[Thm.~5.8]{BN}, where $\cB$ is presented in a different way
(using the module decomposition of $\cB$ relative to the adjoint action of the grading subalgebra), and
we could transport that result to our setting.  However, for the convenience of the reader,
instead we  include a direct proof in the present setup.

\begin{proposition}
\label{prop:centroid}
For
$\fz\in Z(\cA,-)$,  define $\Lmult_\fz :  \End_\cA(X) \to \End_\cA(X)$ by
$\Lmult_\fz(T) = \fz T$   for $T\in \End_\cA(X)$ (see \pref{pgraph:Zaction}).
Then the  map
$\fz\to \Lmult_\fz|_\cB$ is an algebra monomorphism of $Z(\cA,-)$ into
$\Cent(\cB)$.
Moreover, when $r\ge 3$,  this map is an isomorphism.
\end{proposition}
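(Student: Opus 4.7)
The plan is to establish four things in turn: that $\Lmult_\fz|_\cB$ lands in $\Cent(\cB)$ for every $\fz \in Z(\cA,-)$, that the correspondence $\fz \mapsto \Lmult_\fz|_\cB$ is multiplicative, that it is injective, and (the main point) that it is surjective when $r \geq 3$. For the first, I would observe that $\Lmult_\fz$ is post-composition with the $\cA$-linear endomorphism $x \mapsto x.\fz$ of $X$, which is central in $\End_\cA(X)$; hence it commutes with $\ad T$ for every $T \in \cB$. That $\Lmult_\fz$ preserves $\cB$ itself follows from the identities $\fz U(x,y) = U(x.\fz, y)$ of \pref{pgraph:Zaction} applied to spanning vectors of each $\cU_\mu$, $\mu \in \Dl$. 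Multiplicativity is just $\Lmult_{\fz_1 \fz_2} = \Lmult_{\fz_1} \circ \Lmult_{\fz_2}$.

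Injectivity is easy: evaluate $\Lmult_\fz$ at $U(\vz, x_1) \in \cU_{\ep_1} \subseteq \cB$ to obtain $U(\vz.\fz, x_1)$. If this is zero, the uniqueness in Proposition \ref{prop:Qgradeexp}(c) forces $\vz.\fz = 0$; but then $0 = \xi(\vz.\fz, \vz) = \overline{\fz}\,\xi(\vz,\vz) = \fz$, using $\overline{\fz} = \fz$.

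For surjectivity, fix $\chi \in \Cent(\cB)$. Since $\chi$ commutes with $\ad h$ for $h \in \hd$, it preserves each root space $\cB_\mu$. On every $\cB_{\ep_i + \ep_j}$ with $i \ne j$ and $j \ne \prmi$, the parametrization of Proposition \ref{prop:Qgradeexp}(a) produces a unique $\F$-linear map $T_{ij} : \cA \to \cA$ satisfying $\chi(U(x_i.\al, x_j)) = U(x_i.T_{ij}(\al), x_j)$. Passing $\chi$ through identity \eqref{eq:Uident4} in the two ways available to a centroid element yields, for all $\al, \beta \in \cA$ and every admissible $k$, the coupled relations $T_{ij}(\al)\beta = T_{ik}(\al\beta) = \al\, T_{\prmj k}(\beta)$. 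Setting $\beta = 1$ shows $T_{ij}$ is independent of $j$, and then $\al = 1$ makes it left multiplication by some $\fz_i := T_{ij}(1) \in \cA$; the right half of the coupled relations then promotes $\fz_i$ to $Z(\cA)$ and forces $\fz_i = \fz_{\prmj}$ for every admissible $j$. Chaining across admissible $(i,j)$ collapses all $\fz_i$ into a single $\fz \in Z(\cA)$. The skew relation \eqref{eq:Uskew} gives $\overline{\fz} = \fz$, placing $\fz$ in $Z(\cA,-)$. Finally, the locus on which $\chi$ and $\Lmult_\fz$ agree is an ideal of $\cB$ containing every $\cB_{\ep_i + \ep_j}$ with $i, j, \prmi, \prmj$ pairwise distinct, so Remark \ref{rem:idB} identifies it with $\cB$.

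The main obstacle I anticipate is the index bookkeeping just described: identity \eqref{eq:Uident4} requires $k \ne \prmi, \prmj$, while for $T_{ik}$ to be defined on all of $\cA$ one needs $k \notin \{i, \prmi\}$ as well. Combined with $j \ne i, \prmi$, these constraints exhaust the $2r$ indices when $r = 2$ and leave no useful $k$; it is precisely for $r \geq 3$ that enough choices of $k$ remain both to render $T_{ij}$ independent of $j$ and to link the $\fz_i$'s into one central scalar.
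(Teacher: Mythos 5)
Your proposal is correct and follows essentially the same route as the paper's proof: centroidality and preservation of root spaces for $\Lmult_\fz$, injectivity via the uniqueness in Proposition \ref{prop:Qgradeexp}(c) and evaluation of $\xi$, and surjectivity by parametrizing the action on $\cB_{\ep_i+\ep_j}$, pushing $\chi$ through \eqref{eq:Uident4} and \eqref{eq:Uskew} (using $r\ge 3$ to chain the maps into a single central, symmetric $\fz$), and finishing with Remark \ref{rem:idB}. Your closing remark correctly pinpoints why $r\ge 3$ is needed, exactly as in the paper's choice of $k$ with $i,\prmi,j,\prmj,k,\prmk$ distinct.
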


\begin{proof}
One can verify directly that the map $\fz\to \Lmult_\fz$
is an algebra homomorphism of $Z(\cA,-)$ into the
centroid of the associative algebra $\End_\cA(X)$.  Further, by
\pref{pgraph:Zaction}, $\Lmult_\fz$ stabilizes $\cF$ for
$\fz\in Z(\cA,-)$.
Therefore,
$\Lmult_\fz|_\cF$ lies in the centroid of $\cF$, so it stabilizes the root spaces of $\cF$ relative
to $\hd$.  Thus, $\Lmult_\fz$ stabilizes $\cB$, and so   $\Lmult_\fz|_{\cB}$  lies in the centroid
of $\cB$. To see that the map $\fz\to \Lmult_\fz|_\cB$ is injective, suppose that  $\Lmult_\fz|_\cB = 0$
where $\fz\in Z(\cA,-)$.  Then $U(v_0.\fz,x_i) = \Lmult_\fz U(v_0,x_i) = 0$ for $1\le i \le 2\rk$,
so $v_0.\fz = 0$ by the uniqueness in Proposition \ref{prop:Qgradeexp}~(c). Hence,
$\fz = \xi(v_0,v_0.\fz) = 0$.

Now let $\rk\ge 3$ and $T\in \Cent(\cB)$.  We show
that $T = \Lmult_\fz$ for some $\fz\in Z(\cA,-)$.  Set
$\ttJ = \set{1,\dots,2\rk}$.    By Proposition \ref{prop:Qgradeexp},
if  $i,\prmi, j$ are distinct in $\ttJ$, then
any element of the root space of $\cB$
corresponding to $\ep_i + \ep_j$
can be written uniquely in the form $U(x_i.\al,x_j)$,
where $\al\in\cA$.  Since $T$ must stabilize root spaces, when  $i,\prmi, j$ are distinct in $\ttJ$,
there exists a unique $\tau_{ij}\in \End_\F(\cA)$ such that
\begin{equation*}
\label{eq:C1}
T(U(x_i.\al,x_j)) = U(x_i.\tau_{ij}(\al),x_j)
\end{equation*}
for $\al\in \cA$.

Applying $T$ to equation
\eqref{eq:Uident4}, we see that
\begin{equation}
\label{eq:C2}
\tau_{i k}(\al\beta) = \al\tau_{\prmj k}(\beta) = \tau_{ij}(\al)\beta
\end{equation}
for $i,\prmi,j,\prmj,k, \prmk $ distinct in $\ttJ$.  The special cases of $\al = 1$ and $\beta = 1$ then tell us
that
\begin{equation}
\label{eq:C3}
\tau_{i k} = \tau_{\prmj k} = \tau_{ij}
\end{equation}
when  $i,\prmi,j,\prmj,k, \prmk$ are distinct in $\ttJ$.
Now if $i,\prmi, j$ are distinct in $\ttJ$, then since $r\ge 3$, we can choose $k\in \ttJ$
with $i,\prmi,j,\prmj,k, \prmk $ distinct,  in which case
$\tau_{ij} =\tau_{ik} = \tau_{i\prmj}$,  and similarly
$\tau_{ij} = \tau_{\prmi j}$.   So any $\tau_{ij}$ with $i,\prmi, j$ distinct in $\ttJ$
equals one with $i,j$ distinct in $\set{1,\dots,r}$, and in that event,  it follows easily from \eqref{eq:C3}
that $\tau_{ij}$ is independent of the choice of $i,j$.  We let
$\tau$  denote this common map.   From  \eqref{eq:C2} we know that
$\tau \in \Cent(\cA)$, and therefore
\[\tau(\al) = \fz \al = \al \fz\]
for $\al\in\cA$, where $\fz = \tau(1) \in Z(\cA)$.
Now applying $T$ to equation   \eqref{eq:Uskew},  we see that
$U(x_i.\tau(\al),x_j) = -U(x_j.\tau(\overline \al),x_i) = U(x_i.\overline{\tau(\overline \al)},x_j)$ for
$\al\in\cA$, $i,\prmi,j$ distinct in $\ttJ$.  So $\tau$ commutes with $-$. Hence
$\fz\in  Z(\cA,-)$.

Finally, $T' :=T - \Lmult_\fz$ sends
the root spaces $\cB_{\ep_i+\ep_j}$, with $i,j,\prmi, \prmj \text{ distinct in \ttJ}$, to 0
so $T' = 0$ by Remark \ref{rem:idB}.
\end{proof}
\smallskip

\section[Unitary Lie algebras over associative tori]{\cm HERMITIAN  FORMS AND UNITARY LIE ALGEBRAS OVER ASSOCIATIVE TORI}
\label{sec:Herm}\

The  results we develop in this  chapter on
graded hermitian forms and unitary Lie algebras
over associative tori with involution will be used in the  next chapter  to
construct unitary Lie tori of type $\BCr$.

\subsection{Associative tori with involution}\
\label{subsec:associativetori}

Suppose that $\Llat$ is an additive abelian group.

\begin{definition}
\label{def:associativetorus}
An \emph{associative $\Llat$-torus} is an
$\Llat$-graded associative algebra $\cA$
such that $\cA^\sg$ is spanned by an invertible
element of $\cA$ for each $\sg\in \Llat$.  (In other language,
an associative  $\Llat$-torus is a \emph{twisted group algebra} of $\Llat$ \cite[\S 1.1.2]{P}.)
If ``$-$'' is a graded involution on such an
$\cA$, then $\cA$ is an \emph{associative $\Llat$-torus with involution}.
\end{definition}

\begin{remark}
If $\Llat$ is free of finite rank, the associative $\Llat$-tori
with involution have been classified by  Yoshii in \cite{Y2}.  We will return  to this setting
in Section \ref{subsec:associativentori}.
\end{remark}

The following fact is stated in \cite[Prop.~2.44\,(iii)]{BGK} when $\Llat$  is free of finite  rank,
and for arbitrary $\Llat$   in \cite[7.7.1]{N3}.  For the convenience of the reader, we
supply a proof.

\begin{lemma}
\label{lem:torusfact}
If $\cA$ is an associative $\Llat$-torus, then $\cA = Z(\cA)\oplus [\cA,\cA]$.
\end{lemma}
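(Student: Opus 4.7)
The plan is to exploit the twisted-group-algebra structure of $\cA$. Fix invertible homogeneous elements $u_\sigma$ spanning $\cA^\sigma$ for each $\sigma\in\Llat$, normalized so that $u_0 = 1$. Since $u_\sigma u_\tau$ is a nonzero element of the one-dimensional space $\cA^{\sigma+\tau}$, there is $c(\sigma,\tau)\in\F^\times$ with $u_\sigma u_\tau = c(\sigma,\tau)u_{\sigma+\tau}$, and I would introduce the commutation factor $\gamma(\sigma,\tau) := c(\sigma,\tau)/c(\tau,\sigma) \in \F^\times$, which satisfies $u_\sigma u_\tau = \gamma(\sigma,\tau) u_\tau u_\sigma$. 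A short computation based on the associativity of $\cA$ and the fact that each $\cA^\sigma$ is one-dimensional shows that $\gamma$ is biadditive on $\Llat$, that $\gamma(\tau,\sigma) = \gamma(\sigma,\tau)^{-1}$, and that $\gamma(\sigma,\sigma) = 1$. Put $R := \{\sigma\in\Llat : \gamma(\sigma,\tau) = 1 \text{ for all }\tau\in\Llat\}$, which is then a subgroup of $\Llat$.

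First I would show $\cA = Z(\cA) + [\cA,\cA]$ by treating each graded component. If $\sigma\in R$, then $u_\sigma$ commutes with every $u_\tau$ and hence lies in $Z(\cA)$, so $\cA^\sigma\subseteq Z(\cA)$. If $\sigma\notin R$, pick $\tau$ with $\gamma(\tau,\sigma) = \alpha\ne 1$; then $u_\tau u_\sigma u_\tau^{-1} = \alpha u_\sigma$, so
\[
[u_\tau,\, u_\sigma u_\tau^{-1}] \;=\; u_\tau u_\sigma u_\tau^{-1} - u_\sigma \;=\; (\alpha-1)u_\sigma,
\]
which exhibits $u_\sigma$ as an element of $[\cA,\cA]$. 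Thus $\cA^\sigma\subseteq[\cA,\cA]$ whenever $\sigma\notin R$, and summing over $\sigma$ yields $\cA = Z(\cA) + [\cA,\cA]$.

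To finish, I would verify that the sum is direct. Both $Z(\cA)$ and $[\cA,\cA]$ are $\Llat$-graded subspaces of $\cA$: the former because the homogeneous components of a central element are themselves central (each equation $[z,u_\tau]=0$ splits into its graded pieces), and the latter because it is spanned by commutators of homogeneous elements, which are homogeneous. The preceding paragraph already yields $Z(\cA) = \bigoplus_{\sigma\in R} \cA^\sigma$, so it remains to show $[\cA,\cA]^\sigma = 0$ for every $\sigma\in R$. This reduces to proving that $[u_\tau,u_\rho] = 0$ whenever $\tau + \rho = \sigma \in R$, which is immediate from biadditivity: since $\gamma(\tau,\tau) = 1$, we obtain $1 = \gamma(\sigma,\tau) = \gamma(\tau,\tau)\gamma(\rho,\tau) = \gamma(\rho,\tau)$, so $u_\tau$ and $u_\rho$ commute.

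I do not anticipate any real obstacle; the only step demanding some care is the biadditivity of $\gamma$, which is a routine consequence of one-dimensionality of the homogeneous components together with associativity. The direct-sum half is the conceptual heart of the argument and hinges on the simple but pleasant observation that $\gamma(\sigma,\sigma)=1$ forces the vanishing of $[u_\tau,u_\rho]$ whenever $\tau+\rho$ lies in the radical $R$.
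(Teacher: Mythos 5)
Your proof is correct. The spanning half coincides with the paper's: your identity $[u_\tau,\,u_\sigma u_\tau^{-1}]=(\alpha-1)u_\sigma$ is exactly the paper's computation $[\al\beta^{-1},\beta]=(1-\delta)\al$ showing every non-central homogeneous element lies in $[\cA,\cA]$. Where you genuinely diverge is the directness: the paper argues by contradiction, taking a nonzero homogeneous $[\al,\beta]\in Z(\cA)\cap[\cA,\cA]$, writing it as $\vartheta\al\beta$ with $\vartheta\in\F^\times$, and deriving $0=[\al,[\al,\beta]]=\vartheta^2\al\al\beta$, which contradicts invertibility — a quick ad hoc element computation needing no extra structure. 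You instead package the twisted-group-algebra structure into the alternating bicharacter $\gamma$ with radical $R$, and kill $[\cA,\cA]^\sigma$ for $\sigma\in R$ via biadditivity and $\gamma(\tau,\tau)=1$. Your route costs the (routine, but necessary to verify) biadditivity of $\gamma$, and it buys a sharper structural statement than the lemma itself, namely $Z(\cA)=\bigoplus_{\sigma\in R}\cA^\sigma$ and $[\cA,\cA]=\bigoplus_{\sigma\notin R}\cA^\sigma$; the paper's argument is shorter but yields only the decomposition as stated. Both are complete proofs.
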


\begin{proof} To show that $Z(\cA)\cap [\cA,\cA] = 0$, suppose the contrary.
Then since $\cA$ is an associative $\Llat$-torus, we know that
$[\cA,\cA]^\sg = Z(\cA)^\sg$ is one-dimensional for some $\sg\in \Llat$.
So  $0\ne [\al,\beta] \in Z(\cA)^\sg$ for some nonzero homogeneous $\al,\beta\in\cA$.
Since  $\cA^{\sg}$ is one-dimensional, $[\al,\beta] = \vartheta \al \beta$ for some
$\vartheta\in \F^\times$.  So $0 = [\al,[\al,\beta]] = \vartheta[\al,\al \beta] = \vartheta \al[\al,\beta] = \vartheta^2\al \al \beta$,
and since $\al,\beta$ are invertible $\vartheta = 0$, a contradiction.

To show that $\cA = Z(\cA)+ [\cA,\cA]$,  it is enough to prove that  any homogeneous element $\al$
of $\cA \setminus Z(\cA)$  is in $[\cA,\cA]$.  Now $\al\beta \ne \beta \al$ for some nonzero homogeneous $\beta$.
So $\beta\al = \delta \al\beta$, where $\delta \in \F^\times$ and $\delta \ne 1$.
Then $[\al\beta^{-1},\beta] = (1-\delta)\al$, so   $\al\in [\cA,\cA]$.  \end{proof}

\begin{pgraph}
\label{pgraph:L+-}
If $\cA$ is an associative
$\Llat$-torus with involution, then since $\cA$ is finely graded with support $\Llat$, we have
$\Llat  = \Llat_+ \distu \Llat_-$,
where $\distu$ denotes disjoint union.  (See  \pref{pgraph:graded}~(c).)
\end{pgraph}

\begin{pgraph}
\label{pgraph:rank}   Now suppose that  $L$ is a subgroup of  an abelian group $\G$ and
$\cA$ is an associative
$\Llat$-torus.     Then
$\cA$ is a graded division algebra (in the sense that each nonzero homogeneous element
in invertible), so $\G$-graded $\cA$-modules
have properties analogous to modules over a division algebra.
In particular,  if $X$ is such a $\G$-graded $\cA$-module,   every homogeneous $\cA$-spanning set of $X$ contains a homogeneous $\cA$-basis of $X$;
every homogeneous $\cA$-independent set in $X$ is contained in a homogeneous $\cA$-basis of $X$;
$X$ is a free $\cA$-module with a homogeneous $\cA$-basis;
and any two homogeneous $\cA$-bases for $\cA$ have the same cardinality  \cite[p.~100]{RTW}.
We define the \emph{rank} of $X$ over $\cA$, denoted by $\rank_\cA(X)$, to be
the cardinality  of a homogeneous $\cA$-basis for~$X$.

If $\sg\in \G$ and $B^\sg$ is an $\F$-basis for $X^\sg$,
then $B^\sg$ is also an $\cA$-basis for $X^\sg.\cA$, so
\[\rank_\cA(X^\sg.\cA) = \dim_\F(X^\sg).\]

Let $S = \supp_\G(X)$.  Then $\Llat+S \subseteq S$, so
$S$ is the union of cosets of $\Llat$ in $\G$. Let $S/\Llat = \{ \sigma + L \mid \sigma \in S\}$
in  $\G/\Llat$.
Then $X = \bigoplus_{\sg\in\Theta} X^\sg.\cA$, where
$\Theta$ is a set of representatives of the cosets in  $S/\Llat$, and
\begin{equation}
\label{eq:rank}
\rank_\cA(X) = \textstyle\sum_{\sg\in \Theta} \dim_\F(X^\sg),
\end{equation}
where the right-hand side uses the arithmetic of cardinal numbers.
\end{pgraph}

\smallskip
\subsection{Hermitian forms over associative tori with involution}\
\label{subsec:herm}

Throughout Section \ref{subsec:herm},  \emph{we assume $\Llat$ is a subgroup
 of an abelian group $\G$ and
$(\cA,-)$ is an associative $\Llat$-torus with involution.}  Recall that we are regarding
$(\cA,-)$ as a $\G$-graded associative algebra with involution (as in \pref{pgraph:graded}\,(a)).

\begin{definition}
\label{def:isotropic}
Suppose that
$\xi : X \times X \to \cA$ is a $\G$-graded hermitian form over $(\cA,-)$.
We say that $\xi$ (or $X$)  is \emph{anisotropic}%
\footnote{If $\G$ is torsion-free,  then $\G$ can be ordered, so it is easy to check that our graded definition
of anisotropic is equivalent to the usual
ungraded definition of anisotropic, namely that $\xi(x,x) \ne 0$ for all
$0\ne x\in X$ \cite[p.~101]{RTW}.}
if  $\xi(x,x)\ne 0$ for all nonzero homogeneous $x\in X$.
An $\cA$-submodule $Y$ of $X$  is
\emph{totally isotropic} if
$\xi(y,y')=  0$ for all  $y,y' \in Y$.   When the form $\xi$ on $X$  is nondegenerate
and $X$ is the orthogonal direct sum
of two totally isotropic graded $\cA$-submodules, then we say that $\xi$ (or $X$)
is \emph{hyperbolic}.   When  $X$ has
a  homogeneous $\cA$-basis
$\set{x,y}$ such that $\xi(x,y) = 1$ and $\xi(x,x) = 0= \xi(y,y)$,
then $\xi$ (or $X$) is called a \emph{hyperbolic plane}.
\end{definition}

\begin{pgraph}
\label{rem:perp}
Suppose that
$\xi : X \times X \to \cA$ is a nondegenerate $\G$-graded hermitian form over $(\cA,-)$, and
$X$ has finite rank over $\cA$.
If $Y$ is a graded $\cA$-submodule
of $X$,  then $Y^\perp$ is a graded $\cA$-submodule of $X$ and
$\rank_\cA(Y^\perp) + \rank_\cA(Y) = \rank_\cA(X)$.  Hence if
$\xi|_{Y\times Y}$ is nondegenerate,  then
$X = Y \oplus Y^\perp$.
\end{pgraph}

Using the results of \cite[Sec.~1]{RTW}, we can prove the following analogues  of classical facts
about finite-dimensional  hermitian forms over division rings:

\begin{theorem}
\label{thm:herm}
Let  $\xi : X \times X \to \cA$ be a nondegenerate $\G$-graded
hermitian form  of finite graded $\F$-dimension over an associative $\Llat$-torus with involution~$(\cA,-)$.
\begin{itemize}
\item[(a)]  If $\xi$ is hyperbolic,  and hence $X = W_1\oplus W_2$, where $W_1$ and $W_2$
are totally isotropic graded $\cA$-submodules of $X$, then $\rank_\cA(W_1) = \rank_\cA(W_2)$,
and $X$ is the graded  orthogonal
sum of hyperbolic planes,  where the sum runs over an index set of cardinality $\rank_\cA(W_1)$.
\item[(b)] If $\xi$ is anisotropic, then $X$ has an
orthogonal homogeneous $\cA$-basis.  In fact, any orthogonal set of  nonzero homogeneous  elements
of $X$ is contained in an orthogonal homogeneous basis of $X$ over $\cA$.
\item[(c)] There exist $\G$-graded $\cA$-submodules $X_\hyp$ and $X_\an$ of $X$ such
that
\[X = X_\hyp \perp X_\an,\]
$\xi_\hyp = \xi|_{X_\hyp \times X_\hyp}$
is hyperbolic, and
$\xi_\an = \xi|_{X_\an \times X_\an}$
is anisotropic.  Moreover, if $X = X'_\hyp \perp X'_\an$ is another such orthogonal decomposition,
then there exists an $\cA$-linear  graded isometry from $X$ to $X$ that
maps $X_\hyp$ to $X'_\hyp$ and $X_\an$ to $X'_\an$.
\end{itemize}
\end{theorem}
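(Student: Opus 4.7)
My plan is to establish all three parts as graded analogues of the classical Witt theory for hermitian forms over a division ring, leveraging the fact from \pref{pgraph:rank} that an associative $\Llat$-torus behaves as a graded division algebra: every homogeneous spanning set contains, and every homogeneous $\cA$-independent set extends to, a homogeneous $\cA$-basis. The key enabling observation is that whenever $b\in X$ is a nonzero homogeneous vector with $\xi(b,b)\ne 0$, the scalar $\xi(b,b)$ is a nonzero homogeneous element of $\cA$ and hence invertible, so graded Gram--Schmidt and graded orthogonal projection run as in the classical setting.

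For (a), I would start from a homogeneous $\cA$-basis $\{e_i\}_{i\in I}$ of $W_1$. Total isotropy of $W_1$ together with nondegeneracy of $\xi$ on $X=W_1\oplus W_2$ forces the induced $\cA$-pairing $W_1\times W_2\to \cA$ to be nondegenerate, so by graded linear algebra there is a homogeneous dual basis $\{f_i\}_{i\in I}$ of $W_2$ with $\xi(e_i,f_j)=\delta_{ij}$. This yields $\rank_\cA(W_1)=\rank_\cA(W_2)$, and each $\cA e_i \oplus \cA f_i$ is a hyperbolic plane orthogonal to the others, giving the required decomposition.

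For (b), I would apply Zorn's Lemma to the collection of orthogonal sets of nonzero homogeneous elements of $X$, ordered by inclusion. For a maximal such set $B$, put $Y=\bigoplus_{b\in B} b\cdot\cA$; since each $\xi(b,b)\ne 0$ by anisotropy, $\xi|_{Y\times Y}$ is nondegenerate. If $Y\ne X$, choose a homogeneous $x\in X\setminus Y$ of degree $\sigma$ and consider the formal sum $p(x):=\sum_{b\in B} b\cdot\xi(b,b)^{-1}\xi(b,x)$; each summand lies in the finite-dimensional space $X^\sigma$ and in linearly independent $\cA$-submodules, so only finitely many are nonzero. Then $z:=x-p(x)$ is a nonzero homogeneous element of $Y^\perp$, and $B\cup\{z\}$ contradicts the maximality of $B$. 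The extension clause follows by initiating the Zorn argument from the prescribed orthogonal set.

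For (c), existence comes from taking a maximal hyperbolic graded submodule $X_\hyp$ via Zorn's Lemma (allowing $X_\hyp=0$); setting $X_\an=X_\hyp^\perp$, the projection argument from (b) yields $X=X_\hyp\perp X_\an$, and any isotropic homogeneous $x\in X_\an$ would pair nontrivially with some homogeneous $y\in X_\an$, enabling the construction of a hyperbolic plane inside $X_\an$ that enlarges $X_\hyp$, a contradiction; hence $X_\an$ is anisotropic. The uniqueness assertion is the main obstacle and amounts to a graded version of Witt's cancellation theorem. My plan is first to show that the rank of a maximal totally isotropic graded submodule is an invariant of $X$, so that $\rank_\cA(X_\hyp)=\rank_\cA(X'_\hyp)$ and part (a) produces a graded isometry $X_\hyp\to X'_\hyp$; extending this isometry to all of $X$ then requires a graded analogue of the Witt extension theorem, which I would prove by transfinite induction on rank using graded reflections to transport one homogeneous vector at a time. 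Once the extension is in hand, it automatically maps $X_\an$ onto $X'_\an$.
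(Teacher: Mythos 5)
Your handling of (a), (b), and the existence half of (c) is essentially sound and is at the level the paper itself dismisses as ``easily checked'': since $\xi(b,b)$ is a nonzero homogeneous, hence invertible, element of $\cA$ for an anisotropic homogeneous $b$, and since finite graded $\F$-dimension forces your projection sums to have only finitely many nonzero terms, the graded Gram--Schmidt and dual-basis arguments go through. (One small repair: run the Zorn argument for $X_\hyp$ on pairs of totally isotropic graded submodules rather than on ``hyperbolic submodules,'' since the union of a chain of hyperbolic submodules does not come equipped with compatible totally isotropic halves.)

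The genuine gap is the uniqueness assertion in (c), which you yourself identify as the main obstacle but then dispose of in one sentence: you assert without proof that the rank of a maximal totally isotropic graded submodule is an invariant (this is already Witt-cancellation-type content, so nothing has been reduced), and you propose to prove a graded Witt extension theorem ``by transfinite induction on rank using graded reflections.'' Note that $X$ may have infinite rank over $\cA$ --- only the graded $\F$-dimension is finite --- and classical Witt extension/cancellation is a finite-rank theorem; in the ungraded infinite-dimensional setting cancellation actually fails (over $\mathbb R$, an infinite orthogonal sum of hyperbolic planes absorbs both $\langle 1\rangle$ and $\langle -1\rangle$), and a one-vector-at-a-time induction does not obviously produce a surjective isometry at limit stages. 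So any correct argument must exploit the finite graded dimension in an essential way, and your sketch does not say where. The missing idea is exactly the reduction on which the paper's proof rests: declare $\tau\sim\sg$ iff $\tau\equiv\pm\sg \pmod{\Llat}$ and set $X(\sg)=\sum_{\tau\sim\sg}X^\tau$; then $X=\perp_{\sg\in R}X(\sg)$, every graded submodule (in particular $X_\hyp$, $X_\an$, $X'_\hyp$, $X'_\an$) decomposes compatibly, and each block $X(\sg)=X^\sg.\cA+X^{-\sg}.\cA$ has finite rank over $\cA$. This reduces all of (c), including the isometry statement, to the finite-rank graded Witt decomposition theorem, which the paper then quotes from \cite[Prop.~1.4(iv)]{RTW} after checking that the torsion-free divisibility hypothesis there is not used. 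Without this reduction (or a genuinely worked-out infinite-rank graded Witt theorem), your proof of (c) is incomplete.
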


\begin{proof}  We may assume that $X\ne 0$.  We define an  equivalence relation $\sim$ on $\G$ by saying
$\tau \sim \sg$ if and only if $\tau \equiv \pm \sg \pmod \Llat$. If
$\sg \in \G$, we set $X(\sg) = \sum_{\tau \sim \sg}X^\tau$.
Let $R$ denote a set of representatives
of the equivalence classes for $\sim$.
Then, any homogeneous element of $X$ is in $X(\sg)$ for some $\sg\in R$;
$X = \perp_{\sg\in R} X(\sg)$; and, if $Y$ is any graded $\cA$-submodule of $X$,
$Y = \perp_{\sg\in R} Y\cap X(\sg)$.
Thus to prove the theorem, we can assume that $X = X(\sg)$ for some
$\sg\in \G$.  Hence, we have $X = X^\sg.\cA + X^{-\sg}.\cA$.
Thus $\rank_\cA(X) = \dim_\F(X^\sg)$ if $2\sg\in \Llat$, whereas $\rank_\cA(X) = 2 \dim_\F(X^\sg)$
if $2\sg\notin \Llat$. So $X$ has finite rank over $\cA$.
Then (a) and (b) are easily checked (using  \ref{rem:perp}).
Moreover, (c) follows from  \cite[Prop.~1.4\,(iv)]{RTW}.
However, a few words are needed to justify the use of this proposition.  Indeed, it is assumed in
\cite{RTW} that the grading group $\G$ is torsion-free and divisible.  But
the proof of \cite[Prop.~1.4\,(iv)]{RTW}
does not use that assumption, provided we interpret
$\frac 12 \Llat$ as $\set{\sg\in \G \suchthat 2\sg \in \Llat}$.
\end{proof}

\begin{definition}
\label{def:Witt}    Suppose the assumptions of Theorem \ref{thm:herm} hold.
\begin{itemize}
\item[(a)]  A decomposition $X = X_\hyp \perp X_\an$ as in
Theorem \ref{thm:herm}\,(c) is said to be   a \emph{Witt decomposition} of $\xi$ (or $X$).
In that case we write
\[\G_\hyp = \supp_\G(X_\hyp) \andd \G_\an = \supp_\G(X_\an).\]
Note that by Theorem \ref{thm:herm}\,(c),
these subsets of $\G$ are independent of the choice of Witt decomposition.
\item[(b)] By assumption
$X_\hyp = W_1 \oplus W_2$, where $W_1$ and $W_2$ are totally isotropic
$\G$-graded $\cA$-submodules of $X$.
The \emph{Witt index of $\xi$} (or $X$) is defined as
$\rank_\cA(W_1) = \rank_\cA(W_2)$.   It follows from
Theorem \ref{thm:herm}~(c)  that this  index is
independent of the choice of Witt decomposition, and it is clear that it does not depend on the
decomposition  $X_\hyp = W_1 \oplus W_2$.
\end{itemize}
\end{definition}

In our setting, anisotropic forms have the following characterization:

\begin{proposition}
\label{prop:anisotropic}  Suppose the assumptions of Theorem \ref{thm:herm} hold.
If  $\xi$ is finely graded and $2\supp_\G(X) \subseteq \Llat$, then $\xi$ is anisotropic.
Moreover,  the converse holds if $\F$ is algebraically closed.
\end{proposition}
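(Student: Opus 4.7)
The plan is to treat the two implications separately, using the Witt decomposition of Theorem~\ref{thm:herm}(c) for the forward direction and an algebraic-closedness argument to solve a quadratic for the converse.

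For the forward direction, assume $\xi$ is finely graded and $2\supp_\G(X)\subseteq \Llat$, and write $X = X_\hyp \perp X_\an$ as in Theorem~\ref{thm:herm}(c); it suffices to show $X_\hyp = 0$, since $\xi|_{X_\an}$ is anisotropic by construction. If $X_\hyp \neq 0$, Theorem~\ref{thm:herm}(a) provides a hyperbolic plane with homogeneous $\cA$-basis $\set{x,y}$ satisfying $\xi(x,x)=\xi(y,y)=0$ and $\xi(x,y)=1$. Writing $\sg := \deg_\G(x)$, we get $\deg_\G(y) = -\sg$, and the support hypothesis yields $-2\sg \in \Llat$, so $\cA^{-2\sg} = \F c$ for some invertible $c$. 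Then $x.c$ is a nonzero element of $X^{-\sg}$, and fine grading forces $X^{-\sg} = \F y$, whence $y \in x.\cA$. But $\xi(x,x)=0$ together with sesquilinearity makes $x.\cA$ totally isotropic, contradicting $\xi(x,y)=1$.

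For the converse, assume $\F$ is algebraically closed and $\xi$ is anisotropic. For any nonzero homogeneous $x$ of degree $\sg$ we have $0 \ne \xi(x,x) \in \cA^{2\sg}$, so $2\sg \in \Llat$, giving $2\supp_\G(X)\subseteq \Llat$ at once. To prove fine grading, suppose some $X^\sg$ contains $\F$-linearly independent $x_1, x_2$. Pick an invertible generator $c$ of $\cA^{2\sg}$ and write $\xi(x_i,x_j) = \al_{ij}c$ with $\al_{ij}\in \F$ and $\al_{11},\al_{22}\ne 0$. Since $\bar c \in \cA^{2\sg} = \F c$ and the involution has period~$2$, we have $\bar c = \ep c$ with $\ep = \pm 1$, whence $\al_{21} = \ep\al_{12}$.

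A straightforward computation, using the centrality of $\F$ in $\cA$ and that the involution fixes $\F$, yields
\[\xi(x_1 + x_2.\mu,\,x_1 + x_2.\mu) = \bigl(\al_{11} + (1+\ep)\al_{12}\mu + \al_{22}\mu^2\bigr)c\]
for $\mu \in \F$. The main obstacle is to find $\mu\in \F$ killing the right-hand side: when $\ep = 1$ this is a nondegenerate quadratic in $\mu$, while when $\ep = -1$ it reduces to $\mu^2 = -\al_{11}/\al_{22}$, so algebraic closedness of $\F$ supplies a root in either case. The resulting nonzero homogeneous element $x_1 + x_2.\mu \in X^\sg$ is isotropic, contradicting anisotropy; this is precisely the step where the hypothesis on $\F$ is essential.
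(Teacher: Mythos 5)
Your argument is correct. For the forward implication you take a different route from the paper: the paper argues directly that a nonzero homogeneous isotropic vector $x\in X^\sg$ would satisfy $X^{-\sg}=x.\cA^{-2\sg}$ (by fine grading and $2\sg\in\Llat$), and the same reasoning on every component forces $\xi(x,\cdot)$ to vanish identically, contradicting nondegeneracy -- no Witt decomposition is invoked. You instead use Theorem~\ref{thm:herm}\,(a) and (c) to extract a hyperbolic plane from $X_\hyp$ and reach the contradiction via $\xi(x,y)=1$; this is legitimate under the standing hypotheses but uses more machinery, and the essential computational point (fine grading plus $2\sg\in\Llat$ force $x.\cA^{-2\sg}$ to exhaust $X^{-\sg}$, so $\xi(x,\cdot)$ vanishes where it must not) is the same in both proofs. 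For the converse your argument is essentially the paper's: after reducing to $X^\sg$, the paper writes $\xi(x,y)=f(x,y)a$ with $0\ne a\in\cA_+^{2\sg}$ and quotes the classical fact that an anisotropic symmetric bilinear form over an algebraically closed field has dimension at most one, whereas you solve the resulting quadratic in $\mu$ explicitly -- the same idea, just spelled out. One small remark: your case $\ep=-1$ cannot actually occur, since $\xi(x_1,x_1)=\al_{11}c$ is fixed by the involution and $\al_{11}\ne 0$ forces $\bar c = c$; carrying the case along is harmless but unnecessary.
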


\begin{proof} ``$\Rightarrow$'' Suppose the contrary that $\xi(x,x) = 0$ for some $0\ne x\in X^\sg$, $\sg\in \G$.
As $2\sg\in \Llat$, it follows that $x.\cA^{-2\sg} \ne 0$.    But since $X$ is finely-graded, $X^{-\sg} = x.\cA^{-2\sg}$ and hence
$\xi(x, X^{-\sg}) = 0$, a contradiction.

``$\Leftarrow$'' By the argument in the proof of Theorem \ref{thm:herm}, we can assume that $X = X^\sg .\cA + x^{-\sg}.\cA$
for some $\sg\in \supp_\G(X)$.  But
$0\ne \xi(X^\sg,X^{\sg}) \subseteq \cA_+^{2\sg}$, so $2\sg \in \Llat$.  Hence,
\[X = X^\sg . \cA,\]
so $2\supp_\G(X) = 2(\sg+\Llat) \subseteq \Llat$. It remains to show that $\dim_\F(X^\sg) = 1$.
For this fix $0\ne a \in \cA_+^{2\sg}$.  We can define   an $\F$-bilinear form $f:X^\sg \times X^\sg \to \F$
such that $\xi(x,y) = f(x,y) a$ for $x,y\in X^\sg$.  Then, $f$ is symmetric and
anisotropic (in the usual ungraded sense).  Hence, since $\F$ is algebraically closed, $\dim(X^\sg) = 1$.
\end{proof}

\subsection{Unitary Lie algebras over associative tori with involution}
\label{subsec:unitary}   \

In Section \ref{subsec:unitary} (except in the final Remark \ref{rem:ungraded}), \emph{we assume  that
$\Llat$ is a subgroup
of an abelian group $\G$, that
$(\cA,-)$ is an associative $\Llat$-torus with involution,
and that $\xi : X\times X \to \cA$ is a nondegenerate $\G$-graded hermitian form of finite graded $\F$-dimension
over $(\cA,-)$; and we fix a  Witt decomposition $X = X_\hyp \perp X_\an$ of $\xi$}.

By Theorem \ref{thm:herm} we can choose a homogeneous $\cA$-basis $\set{x_i}_{i\in \ttI}$ for $X$ and a permutation
$i\mapsto \prmi$
\footnote{We have two maps, the involution $\alpha \to \overline \alpha$ of $\cA$
and the permutation $i \to \prmi$ of $\ttI$.  The context will make it clear which
of these is intended.}
 of period 2 of $\ttI$, such that
\begin{equation}
\label{eq:ximat}
\xi(x_i,x_j) = \delta_{i \prmj}\gm_i
\end{equation}
for $i,j\in \ttI$, where each $\gm_i$ is a homogeneous invertible element of $\cA$,
and such that
\begin{equation}
\label{eq:defhyp}
\textstyle X_\hyp = \bigoplus_{i\in \ttJ} x_i.\cA \andd X_\an = \bigoplus_{i\in \ttK} x_i.\cA.
\end{equation}
where
\[\ttJ = \set{i\in \ttI \suchthat \prmi \ne i} \andd \ttK = \set{i\in \ttI \suchthat \prmi = i}.\]
In this section, \emph{we fix a choice of such a basis $\set{x_i}_{i\in \ttI}$
and we fix a total ordering ``$\le$'' of the (possibly infinite) index set $\ttI$}.
Our goal is to use the basis $\set{x_i}_{i\in \ttI}$ to deduce more information  about
$\ffu(X,\xi)$ and $\fsu(X,\xi)$.

The elements $\gm_i$ and the bijection $i\mapsto
\prmi$ are uniquely determined by the basis $\set{x_i}_{i\in \ttI}$.
Also \eqref{eq:ximat} implies that
\begin{equation}
\label{eq:gmbar}
\gm_{\prmi} = \overline{\gm_i}
\end{equation}
for $i\in \ttI$.

As in Definition \ref{def:unitary}, we let
\[\cE = \fe(X,\xi),\quad \cU = \fu(X,\xi),\quad \cF = \ffu(X,\xi),  \andd \cS = \fsu(X,\xi).\]

\begin{pgraph} \ptitle{Matrix notation and the trace map}
\label{pgraph:matrix}
We adopt  matrix notation relative to the basis $\set{x_i}_{i\in\ttI}$,
which enables us to introduce a trace map on $\cE$.

(a)    For $i,j\in \ttI$ and $\al\in \cA$, let $e_{ij}(\al)$ denote the endomorphism in $\End_\cA(X)$
given by
\begin{equation}
\label{eq:eij}
e_{ij}(\al) x_t = \delta_{jt}x_i.\al
\end{equation}
for $t\in \ttI$.
Then  one verifies using \eqref{eq:ximat} that
\begin{equation}
\label{eq:Etoe}
E(x_i.\al,\,x_j.\beta) = e_{i\prmj}(\al\overline\beta \gm_j)
\quad\text{and so}
\quad e_{ij}(\al) = E(x_i,\,x_\prmj.(\gm_j^{-1}\overline \al))
\end{equation}
for $i,j\in I$, $\al,\beta\in \cA$.  Thus,  each $e_{ij}(\al)$ is in $\cE$ and
\begin{equation}
\label{eq:Esum}
\cE = \sum_{i,j\in\ttI} e_{ij}(\cA).
\end{equation}
In fact,  this sum is direct,  and each element of $\cE$
can be written uniquely in the form $\sum_{ij} e_{ij}(\al_{ij})$, where
$\al_{ij}\in \cA$ for all $i,j$. Moreover, we have
\begin{equation}
\label{eq:eprod}
e_{ij}(\al)e_{k\ell}(\beta) = \delta_{jk}e_{i\ell}(\al\beta)
\end{equation}
for $i,j,k,\ell \in \ttI$, $\al,\beta\in \cA$,
and so $\cE$ can be regarded as the \emph{associative algebra of   $(\ttI \times \ttI)$-matrices
over $\cA$} with finitely many nonzero entries.
\smallskip

(b)   The \emph{trace map} on $\cE$ is the additive map $\tr : \cE \to \cA$
so that
\begin{equation}
\label{eq:trace0}
\tr\big(e_{ij}(\al)\big) = \delta_{ij}\al
\end{equation}
 for $i,j\in \ttI$, $\al\in \cA$. One easily checks using \eqref{eq:eprod} that
\begin{equation}
\label{eq:trace1} \tr(T_1T_2) \equiv \tr(T_2T_1) \  \   \pmd
\end{equation}
for $T_1,T_2\in \cE$.
Also if $i,j\in \ttI$ and $\al,\beta\in \cA$, then  we have
$\tr\big(E(x_i.\al,\, x_j.\beta)\big) =  \tr\big(e_{i\prmj}(\al\overline\beta\gm_j)\big)
= \delta_{i \prmj}\al\overline\beta\gm_j
\equiv \delta_{i \prmj}\overline\beta\gm_j \al \  \pmd$
and so
\[\tr\big(E(x_i.\al,\,x_j.\beta)\big) \equiv \xi(x_j.\beta,x_i.\al)\  \pmd.\]
Hence for all $x,y\in\cA$,
\begin{equation}
\label{eq:trace2}
\tr\big(E(x,y)\big) \equiv  \xi(y,x) \  \  \pmd.
\end{equation}
  Consequently, modulo $[\cA,\cA]$, the trace function is independent of
the choice of basis $\set{x_i}_{i\in \ttI}$.  (This is also easy to
see directly using \eqref{eq:trace1}.)
\smallskip

(c)   It follows from \eqref{eq:trace2} that
\begin{equation}
\label{eq:trace3} \tr\big(U(x,y)\big) \equiv \xi(y,x) - \xi(x,y) \  \  \pmd. \end{equation}
\smallskip

(d) Recall from \pref{def:unitary}\,(d) that  we have an involution $*$ on $\cE$,
which satisfies $E(x,y) = E(y,x)^*$ for $x,y\in X$.  One can check
using \eqref{eq:Eprod}, \eqref{eq:gmbar} and \eqref{eq:Etoe} that
\begin{equation}
\label{eq:eij*}
e_{ij}(\al)^* = e_{\prmj\,\prmi}(\gm_j^{-1}\overline\al\gm_i)
\end{equation}
for $i,j\in \ttI$ and $\al\in \cA$.

(e)    For $i,j\in \ttI$ and $\al\in \cA$,  set
\begin{equation}
\label{eq:udef}
u_{ij}(\al) =  e_{ij}(\al) -  e_{ij}(\al)^* =  e_{ij}(\al) - e_{\prmj\,\prmi}(\gm_j^{-1}\overline\al\gm_i)
\end{equation}
so that
\begin{equation*}
\label{eq:usymm}
u_{ij}(\al) = -u_{\prmj\, \prmi}(\gm_j^{-1}\overline \al \gm_i).
\end{equation*}
Then \eqref{eq:Etoe} can be used to show that
\begin{equation}
\label{eq:Utou}
U(x_i.\al,\,x_j.\beta) = u_{i\prmj}(\al\overline\beta \gm_j)
\quad \text{and so}\quad u_{ij}(\al) = U(x_i,\,x_\prmj.(\gm_j^{-1}\overline \al ))
\end{equation}
for $i,j\in \ttI$, $\al,\beta\in \cA$.
Thus,
\[\cF = \sum_{i,j\in\ttI} u_{ij}(\cA).\]

\end{pgraph}

\begin{remark}  If $X$ has finite rank over $\cA$,
it follows from \eqref{eq:Esum} that
$\cE = \End_\cA(X)$ and hence, by virtue of \eqref{eq:Eadj}, that $\cF = \cU$.
\end{remark}

In the next two propositions, we use $U$-operators and then matrix notation to
to express each element of $\cF$ uniquely  and to calculate its  trace.

\begin{proposition}
\label{prop:FU}   We have
\begin{equation}
\label{eq:FUsum2}
\cF = \sum_{i,j\in \ttI}U(x_i.\cA,\,x_j.\cA)
\end{equation}
with
\begin{equation}
\label{eq:trace4}
\tr\big(U(x_i.\al,x_j.\beta)\big) \equiv \delta_{i\prmj}(\al\overline  \beta \overline{\gm_i} -  \gm_i \beta\overline \al )
\pmd
\end{equation}
for $\al,\beta\in \cA$.
Each element in $\cF$ can be expressed uniquely in the form
\begin{equation}
\label{eq:Uexp}
\sum_{i,j\in\ttI,\, i<j}U(x_i.\al_{ij},\,x_j)
+ \sum_{i\in \ttI} U(x_i.b_i,\,x_i),
\end{equation}
where $\al_{ij}\in \cA$ and $b_i\in \cA_-$.
\end{proposition}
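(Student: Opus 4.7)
The plan is to handle the three claims in order, using the matrix notation of \pref{pgraph:matrix} as the main technical tool.

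For the first claim, since $X = \bigoplus_{i\in\ttI} x_i.\cA$ as $\cA$-modules and $U$ is bi-additive with the tensoring property $U(x.\al,y) = U(x,y.\overline\al)$, any element of $\cF = U(X,X)$ can be expanded as a finite sum of elements of the form $U(x_i.\al,\,x_j.\beta)$. This gives $\cF = \sum_{i,j\in\ttI} U(x_i.\cA,\,x_j.\cA)$ directly.

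For the trace formula, the plan is to apply \eqref{eq:trace2}, namely $\tr(E(x,y)) \equiv \xi(y,x) \pmod{[\cA,\cA]}$, to both summands of $U(x_i.\al,\,x_j.\beta) = E(x_i.\al,\,x_j.\beta) - E(x_j.\beta,\,x_i.\al)$. The hermitian identities give $\xi(x_j.\beta,\,x_i.\al) = \delta_{i\prmj}\overline\beta\,\overline{\gm_i}\al$ (using $\gm_\prmi = \overline{\gm_i}$ from \eqref{eq:gmbar}) and similarly $\xi(x_i.\al,\,x_j.\beta) = \delta_{i\prmj}\overline\al\gm_i\beta$. Cyclically rotating each product modulo $[\cA,\cA]$ converts these into $\delta_{i\prmj}\al\overline\beta\,\overline{\gm_i}$ and $\delta_{i\prmj}\gm_i\beta\overline\al$ respectively, giving \eqref{eq:trace4}.

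For the expression \eqref{eq:Uexp}, I would first prove existence by reducing an arbitrary element of $\cF$ through three moves: replace $U(x_i.\al,\,x_j.\beta)$ by $U(x_i.(\al\overline\beta),\,x_j)$ using \eqref{eq:Uident1}; collapse $i>j$ terms into $i<j$ terms via $U(x_i.\al,x_j) = -U(x_j.\overline\al,x_i)$ from \eqref{eq:Uskew}; and for $i=j$ terms, write $\al = \al_+ + \al_-$ and observe that $U(x_i.\al_+,x_i) = 0$ (since $U(x_i.\al_+,x_i) = U(x_i,x_i.\al_+) = -U(x_i.\al_+,x_i)$), so only the $\cA_-$ part survives. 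This gives an expression of the desired shape.

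Uniqueness is where the main work lies, and I would use the matrix formalism. By \eqref{eq:Utou}, the hypothesis $\sum_{i<j} U(x_i.\al_{ij},\,x_j) + \sum_i U(x_i.b_i,\,x_i) = 0$ becomes
\begin{equation*}
\sum_{i<j}\bigl[e_{i\prmj}(\al_{ij}\gm_j) - e_{j\prmi}(\overline{\al_{ij}}\gm_i)\bigr] + \sum_i 2\,e_{i\prmi}(b_i\gm_i) = 0,
\end{equation*}
where the $e_{i\prmi}$ coefficient uses $b_i\in\cA_-$ to collapse the two $*$-terms together. Since by \eqref{eq:Esum} and the comment following it the $e_{pq}(\al)$ form an $\F$-basis of $\cE$, the coefficient in $\cA$ at each matrix position $(p,q)$ vanishes. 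Reading position $(p,\prmp)$ yields $b_p = 0$ for every $p$ (no off-diagonal contribution reaches the antidiagonal since $i<j$ forbids $i=\prmj = j$). Reading a position $(p,q)$ with $q\ne\prmp$ receives a contribution from exactly one of the pairs $(i,j) = (p,\prmq)$ or $(\prmq,p)$, according to whether $p<\prmq$ or $\prmq<p$, and in either case the coefficient is $\al_{p,\prmq}\gm_\prmq$ or $-\overline{\al_{\prmq,p}}\gm_p$, forcing the corresponding $\al_{ij}$ to vanish. The main obstacle will be carefully tracking how $(i,j)$ with $i<j$ produces entries at the two matrix positions $(i,\prmj)$ and $(j,\prmi)$, and checking that the cases $q = \prmp$ versus $q\ne \prmp$ partition the indexing cleanly so that no cancellation between the off-diagonal and antidiagonal terms occurs.
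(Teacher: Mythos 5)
Your proposal is correct and follows essentially the same route as the paper's proof: \eqref{eq:FUsum2} from the definition of $\cF$, the trace formula via \eqref{eq:trace2} (equivalently \eqref{eq:trace3}), existence of \eqref{eq:Uexp} from \eqref{eq:Uident1} and \eqref{eq:Uident2}, and uniqueness by passing to the matrix units through \eqref{eq:Utou} and \eqref{eq:udef} --- which is exactly what the paper's terser proof invokes. The only blemish is cosmetic: at position $(p,q)$ the coefficient coming from the pair $(\bar q,p)$ is $-\overline{\al_{\bar q\, p}}\,\gm_{\bar q}$ rather than $-\overline{\al_{\bar q\, p}}\,\gm_{p}$ (as your own displayed identity shows), which does not affect the conclusion since the $\gm_i$ are invertible.
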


\begin{proof}  Equation \eqref{eq:FUsum2} follows from the definition of $\cF$.
By \eqref{eq:trace3}, we have
\begin{eqnarray*} \tr\big(U(x_i.\al,x_j.\beta)\big) &\equiv&  \xi(x_j.\beta,x_i.\al) - \xi(x_i.\al,x_j.\beta)
\equiv   \overline \beta\delta_{j\prmi}\gm_j \al - \overline  \al \delta_{i\prmj} \gm_i\beta \\
&\equiv&  \delta_{i\prmj}( \al\overline \beta\overline {\gm_i} - \gm_i\beta\overline \al)\ \pmd.\end{eqnarray*}
Finally, by \eqref{eq:FUsum2}, \eqref{eq:Uident1},  and \eqref{eq:Uident2}, each element of $\cF$ can be expressed in the form given in
\eqref{eq:Uexp}.  Uniqueness of the expression in \eqref{eq:Uexp} comes from
\eqref{eq:udef} and \eqref{eq:Utou}.
\end{proof}

\begin{proposition}
\label{prop:FUmatrix} We have
\begin{equation}
\label{eq:FUmatrix} 
\cF = \sum_{i,j\in \ttI}u_{ij}(\cA)= \bigg\{\sum_{i,j\in \ttI} e_{ij}(\al_{ij})
\, \bigg | \,\al_{ij}\in \cA,
\ \al_{\prmj\,\prmi} = -\gm_j^{-1}\overline{\al_{ij}}\gm_i \text{ for } i,j\in \ttI\bigg\},
\end{equation}
with
\begin{equation}
\label{eq:trace5}
\tr\big(u_{ij}(\al)\big) \equiv \delta_{ij}(\al -\overline \al)\pmd
\end{equation}
for $\al,\beta\in \cA$ and $i,j \in \ttI$.  Each element of $\cF$
can be expressed uniquely in the form
\begin{equation}
\label{eq:uexp}
\sum_{i,j\in\ttI,\, i< \prmj} u_{ij}(\al_{ij}) + \sum_{i\in\ttI} u_{i\prmi}(b_i\gm_i)
\end{equation}
where $\al_{ij}\in\cA$ and $b_i\in \cA_-$.
\end{proposition}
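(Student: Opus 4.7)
The plan is to derive all three claims from the matrix description of $\cE$ together with the involution formula \eqref{eq:eij*} and the characterization $\cF = \{T\in\cE \suchthat T^* = -T\}$ from \eqref{eq:Fchar1}.

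First I would establish $\cF = \sum_{i,j}u_{ij}(\cA)$. One inclusion is immediate from the definition of $u_{ij}(\al) = e_{ij}(\al) - e_{ij}(\al)^*$, which is $*$-skew and hence in $\cF$. For the reverse inclusion, I would invoke \eqref{eq:FUsum2} together with the first identity in \eqref{eq:Utou}, which shows $U(x_i.\al,x_j.\beta) = u_{i\prmj}(\al\overline{\beta}\gm_j) \in u_{i\prmj}(\cA)$. Next, to obtain the matrix description, write $T = \sum_{i,j}e_{ij}(\al_{ij})$ uniquely using \eqref{eq:Esum} and the directness of that sum. Applying \eqref{eq:eij*} gives $T^* = \sum_{i,j}e_{\prmj\prmi}(\gm_j^{-1}\overline{\al_{ij}}\gm_i)$; after re-indexing $(p,q) = (\prmj,\prmi)$ and using $\gm_{\prmi} = \overline{\gm_i}$ from \eqref{eq:gmbar}, the condition $T^* = -T$ becomes $\al_{\prmj\prmi} = -\gm_j^{-1}\overline{\al_{ij}}\gm_i$, matching the claim.

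For the trace formula \eqref{eq:trace5}, I would just expand $\tr(u_{ij}(\al)) = \tr(e_{ij}(\al)) - \tr(e_{\prmj\prmi}(\gm_j^{-1}\overline{\al}\gm_i))$ via \eqref{eq:trace0}. The first piece is $\delta_{ij}\al$; the Kronecker $\delta_{\prmj\prmi}$ in the second piece equals $\delta_{ij}$, and when $i=j$ the argument is $\gm_i^{-1}\overline{\al}\gm_i$. Since modulo $[\cA,\cA]$ we may cyclically permute factors (this is just $xy \equiv yx$), $\gm_i^{-1}\overline{\al}\gm_i \equiv \overline{\al}$, giving \eqref{eq:trace5}.

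Finally, uniqueness of \eqref{eq:uexp} reduces to bookkeeping via the matrix description. The involutive map $(i,j) \mapsto (\prmj,\prmi)$ on $\ttI\times \ttI$ has fixed points exactly the pairs with $j = \prmi$, and the relation $\al_{\prmj\prmi} = -\gm_j^{-1}\overline{\al_{ij}}\gm_i$ lets me freely choose one representative per non-fixed orbit. Splitting an orbit $\{(i,j),(\prmj,\prmi)\}$ by the total order (choosing the one with $i < \prmj$) and collecting all such contributions $u_{ij}(\al_{ij})$ accounts for the first sum in \eqref{eq:uexp}. For a fixed point $(i,\prmi)$, writing $\al_{i\prmi} = b_i\gm_i$ and plugging into the constraint (using $\gm_{\prmi} = \overline{\gm_i}$) forces $b_i = -\overline{b_i}$, i.e.\ $b_i\in\cA_-$, yielding the second sum. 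The main obstacle, and really the only one, is handling this symmetry carefully so that the total ordering of $\ttI$ selects each orbit exactly once; with the bijection above made explicit, uniqueness follows from the uniqueness of the matrix expansion.
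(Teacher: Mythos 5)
Your proposal is correct, and in substance it consists of the same routine verifications as the paper, reached by a slightly different route at two points. For the matrix description in \eqref{eq:FUmatrix} you work entirely on the $\cE$-side, characterizing $\cF$ as the $*$-skew part of $\cE$ via \eqref{eq:Fchar1} and translating $T^*=-T$ entrywise through \eqref{eq:eij*}, whereas the paper reads both equalities directly off \eqref{eq:Utou} and \eqref{eq:udef}. Likewise, for the unique expression \eqref{eq:uexp} the paper simply transfers the uniqueness statement of Proposition \ref{prop:FU} through the dictionary \eqref{eq:Utou}, while you argue directly with the orbit decomposition of $\ttI\times\ttI$ under $(i,j)\mapsto(\prmj,\prmi)$; both work, and your version has the small merit of making explicit how the total order on $\ttI$ selects one representative per orbit. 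The trace computation is identical to the paper's.

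One detail in your fixed-point bookkeeping needs repair: for $b_i\in\cA_-$ one has, using \eqref{eq:gmbar},
\begin{equation*}
u_{i\prmi}(b_i\gm_i) \;=\; e_{i\prmi}\big((b_i-\overline{b_i})\gm_i\big) \;=\; e_{i\prmi}(2b_i\gm_i),
\end{equation*}
so the $(i,\prmi)$-entry produced by the second sum in \eqref{eq:uexp} is $2b_i\gm_i$, not $b_i\gm_i$. Hence, given $T$ whose fixed-point entry is $\al_{i\prmi}$, the correct coefficient is $b_i=\tfrac12\,\al_{i\prmi}\gm_i^{-1}$ (which does lie in $\cA_-$ by the skew-hermitian constraint and \eqref{eq:gmbar}); your literal substitution $\al_{i\prmi}=b_i\gm_i$ would yield an expression whose fixed-point entries are twice those of $T$. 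This is harmless since $\characteristic(\F)\ne 2$ is a standing hypothesis of the paper, but it is exactly the kind of symmetry bookkeeping you flagged, and the factor $\tfrac12$ should appear.
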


\begin{proof}  The equalities  in \eqref{eq:FUmatrix} follow from
\eqref{eq:Utou} and  \eqref{eq:udef}.
 Also,
since
\[\tr\big(u_{ij}(\al)\big) = \delta_{ij} \al - \delta_{\prmj\, \prmi}\gm_j^{-1}\overline\al\gm_i,\]
we have \eqref{eq:trace5}.  Finally,
the last statement in the proposition follows from the last statement
in  Proposition \ref{prop:FU} and \eqref{eq:Utou}.
\end{proof}

In view of \eqref{eq:FUmatrix},
we can regard $\cF$ as a Lie algebra
of skew-hermitian $(\ttI \times \ttI)$-matrices over
$\cA$.\footnote{If $\prmi = i$ and $\gm_i = 1$ for all $i\in \ttI$, then
$\cF$ is the usual Lie algebra of skew-hermitian matrices.}
\medskip

We now prove the main result of this  chapter  which gives a simple and convenient description
of the Lie algebra $\cS$ and some of its properties. This  theorem was shown to hold in a special case in
\cite[Prop.~III.3.14 and Lem.~III.3.21]{AABGP}.
\medskip

\begin{theorem}
\label{thm:SU}
Suppose that
$\xi : X\times X \to \cA$ is a nondegenerate $\G$-graded hermitian form of finite graded $\F$-dimension
over an associative $\Llat$-torus
with involution $(\cA,-)$,  $X = X_\hyp \perp X_\an$ is a Witt decomposition of $\xi$,
and  $X_\hyp \ne 0$ and $X_\an \ne 0$ (that is $\xi$ is not anisotropic or hyperbolic).
Let $\cS =  \fsu(X,\xi)$ and $\cF  = \ffu(X,\xi)$.
Then
\begin{itemize}
    \item[(a)] $\cS = \set{\ T\in \cF \suchthat \tr(T) \equiv 0 \ \pmd\, }$.
    \item[(b)] $\cS$ is generated as an algebra by $U(X_\hyp,X_\an)$.
    \item[(c)] Suppose that the rank of $X$ over $\cA$ is infinite or finite and not divisible
    by $\characteristic(\F)$ (which holds in particular if $\characteristic(\F) = 0$). Then
    $\centre(\cS) = 0$.
    \item[(d)] Under the assumptions of (c), let $\varpi : \cA \to \F$ be  the $\Llat$-graded projection
onto $\cA^0 = \F1 = \F$ (where $\F$ has the trivial grading). Then
\[(T_1\mid T_2) := \varpi(\tr(T_1T_2))\]
defines a $\G$-graded nondegenerate associative symmetric bilinear form on $\cE$ whose
restriction to $\cF$ and to $\cS$ is nondegenerate.
\end{itemize}
\end{theorem}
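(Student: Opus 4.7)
The plan is to prove parts (a) and (b) first and then use them to handle parts (c) and (d).

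For part (a), I would establish both inclusions separately. The forward inclusion $\cS \subseteq \{T \in \cF : \tr(T) \equiv 0 \pmd\}$ is immediate from \eqref{eq:trace1} combined with $\cS = [\cF,\cF]$. For the reverse inclusion, I would take an element $T \in \cF$ with $\tr(T) \equiv 0$, write it in the normal form of Proposition \ref{prop:FUmatrix}, and express each summand as a Lie commutator: the "off-diagonal" terms $u_{ij}(\al_{ij})$ with $\prmi \ne j$ arise as $[u_{ik}(\al_{ij}), u_{kj}(1)]$ via \eqref{eq:eprod} and \eqref{eq:udef}, where a pivot index $k$ is available because both $X_\hyp$ and $X_\an$ are nonzero; and the "diagonal" terms $u_{i\prmi}(b_i\gm_i)$ are handled by invoking the decomposition $\cA = Z(\cA) \oplus [\cA,\cA]$ from Lemma \ref{lem:torusfact} to isolate the component in $[\cA,\cA]$ (which is precisely where the trace condition forces the element to lie), writing that component itself as a commutator.

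For part (b), let $\cS_0$ denote the Lie subalgebra generated by $U(X_\hyp, X_\an)$. Identity \eqref{eq:Uident6} applied to pairs $v,w \in X_\an$ produces elements of the form $U(x_i.\xi(v,w), x_j) \in \cS_0$ and, as $v,w$ vary, fills out the $U(X_\hyp, X_\hyp)$ component in each root grade $\ep_i + \ep_j$ with $j \ne \prmi$. Identity \eqref{eq:Uident5} then yields $U(X_\an, X_\hyp)$ contributions. Combining these with Proposition \ref{prop:FU} and part (a), one verifies that $\cS_0$ exhausts the trace-kernel of $\cF$, so $\cS_0 = \cS$.

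For part (c), suppose $T \in \centre(\cS)$. Since by part (b) the generators $U(x_i, v) \in U(X_\hyp, X_\an)$ span $\cS$ under bracket, writing $T = \sum e_{ij}(\al_{ij})$ in the matrix form of Paragraph \ref{pgraph:matrix} and imposing $[T, u_{k\ell}(\beta)] = 0$ for all $k,\ell,\beta$ yields strong constraints on the $\al_{ij}$. Following the template of Proposition \ref{prop:centroid}, these constraints force $T$ to act as multiplication by some $\fz \in Z(\cA,-)$; the trace-zero condition inherited from part (a) then gives a relation of the form $\rank_\cA(X) \cdot \fz \equiv 0$ modulo $[\cA,\cA]$, which forces $\fz = 0$ under the rank/characteristic hypothesis. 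The main obstacle is the infinite-rank case: one cannot directly invoke a global trace, so I would instead work locally, restricting to a finite subset of pivot indices at a time, showing that $T$ is "scalar" on each such restriction, and then patching the scalars into a single global $\fz$.

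For part (d), symmetry and the $\G$-grading follow from the definitions once we verify $\varpi([\cA,\cA]) = 0$; this holds because $\cA^0 = \F \cdot 1 \subseteq Z(\cA)$ and $Z(\cA) \cap [\cA,\cA] = 0$ by Lemma \ref{lem:torusfact}, whence $[\cA,\cA]^0 = 0$. Associativity of $(\,\cdot\mid\cdot\,)$ follows from the cyclicity of $\tr$ modulo $[\cA,\cA]$ (a consequence of \eqref{eq:trace1}). Nondegeneracy on $\cE$ is verified using the matrix-unit basis: $\varpi(\tr(e_{ij}(\al)e_{ji}(\beta))) = \varpi(\al\beta)$, and the pairing $(\al,\beta) \mapsto \varpi(\al\beta)$ on the associative torus $\cA$ is nondegenerate since each homogeneous component of $\cA$ is spanned by an invertible element. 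For nondegeneracy on $\cF$, I would use the $*$-invariance of the form together with the $*$-eigenspace decomposition $\cE = \cF \oplus \cE_+$ (where $\cE_+ = \{T\in\cE : T^*=T\}$) to conclude $\cF^\perp \cap \cF = 0$. For $\cS$, by part (a) $\cS$ is the kernel of $\tr$ modulo $[\cA,\cA]$ inside $\cF$, so any $T \in \cF \setminus \cS$ can be paired nontrivially with a diagonal element detecting its nonzero trace component.
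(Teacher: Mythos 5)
The core of this theorem is the inclusion $\set{T\in\cF \suchthat \tr(T)\equiv 0\ \pmd} \subseteq \cS$ in (a), together with the stronger statement in (b) that such $T$ already lie in the subalgebra $\cS_1$ generated by $U(X_\hyp,X_\an)$, and your plan for exactly this step does not work as written. If you expand $T$ in the normal form \eqref{eq:uexp}, then by \eqref{eq:trace5} the trace condition is the single congruence $\sum_{i<\prmi}(\al_{ii}-\overline{\al_{ii}})+\sum_{i=\prmi}(b_i\gm_i+\gm_ib_i)\equiv 0\ \pmd$, which couples \emph{all} the diagonal contributions at once and says nothing about individual summands; those summands are in general not trace-zero, hence cannot each be written as commutators. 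Concretely, your class of ``off-diagonal'' terms $u_{ij}(\al)$ with $j\ne\prmi$ contains the terms $u_{ii}(\al_{ii})$ with $i\in\ttJ$, and for these the proposed identity $u_{ij}(\al)=[u_{ik}(\al),u_{kj}(1)]$ fails: when $i=j$ the bracket acquires an extra term supported at the pivot index $k$, as it must, since (for instance when $\cA$ contains a nonzero central skew element $\zeta$) $u_{ii}(\zeta)$ has trace $2\zeta\not\equiv 0$ and so lies in no commutator space at all. Likewise, Lemma \ref{lem:torusfact} does not let you conclude that each anisotropic diagonal coefficient $b_i\gm_i$ has its trace contribution in $[\cA,\cA]$; only a sum over all diagonal slots does. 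What is missing is a mechanism for collapsing all the diagonal content into one place before the trace condition and Lemma \ref{lem:torusfact} can be applied. The paper does precisely this: by subtracting explicit elements of $\cS_1$ obtained from brackets $[U(x',x''),U(y',y'')]$ with $x',y'\in X_\hyp$, $x'',y''\in X_\an$, it first removes the $U(X_\an,X_\an)$ component, then pushes the whole $U(X_\hyp,X_\hyp)$ component onto $U(x'',x''.\cA_-)$ for a \emph{single} anisotropic vector $x''$ with $\gm=\xi(x'',x'')$; only then does the trace condition give $\gm b\in[\cA,\cA]$, and a final commutator identity, using $\xi(X_\hyp,X_\hyp)=\cA$, shows $U(x'',x''.\gm^{-1}[\cA,\cA])\subseteq\cS_1$. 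The same gap reappears in your (b), where the zero-root-space part (the spaces $U(x_i.\cA,x_\prmi)$ and $U(X_\an,X_\an)$ subject to the trace relation) is exactly what ``one verifies that $\cS_0$ exhausts the trace-kernel'' quietly assumes.

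Two smaller points. In (c) the infinite-rank case is easier than you fear: elements of $\cS\subseteq\cF$ are finite sums of the $e_{ij}(\al)$, so once the centralizer computation forces $T=\sum_{s\in\ttI}e_{ss}(\zeta)$ the scalar $\zeta$ must vanish when $\ttI$ is infinite; no global trace and no patching of local scalars is needed. In (d) your closing sentence does not address the right statement: nondegeneracy of the restriction to $\cS$ means that a nonzero $T\in\cS$ pairs nontrivially with some element \emph{of $\cS$} (equivalently $\cS\cap\cS^{\perp}=0$), whereas you argue that elements of $\cF\setminus\cS$ are detected by the form. The paper disposes of (d) by the argument of \cite[Lem.~III.3.21]{AABGP}, which uses part (c), Lemma \ref{lem:torusfact}, and the identity $\varpi(\tr(T^*))=\varpi(\tr(T))$; some such input beyond your sketch is needed for the $\cS$ case.
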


\begin{proof}  Since $X_\hyp \ne 0$ and $X_\an \ne 0$, the sets
$\ttJ$ and $\ttK$ in \eqref{eq:defhyp}  are nonempty.  Further since $X = X_\hyp\oplus X_\an$ and $\xi(X_\hyp,X_\an)= 0$,
we know
that  $U(X_\hyp,X_\hyp)$ maps $X_\hyp$ to $X_\hyp$ and $X_\an$ to $0$; \ $U(X_\an,X_\an)$ maps $X_\hyp$ to $0$ and $X_\an$ to $X_\an$;
and $U(X_\hyp,X_\an)$ exchanges $X_\hyp$ and $X_\an$.  Therefore
\begin{equation}
\label{eq:FU}
\cF = U(X_\hyp,X_\hyp)\oplus U(X_\an,X_\an) \oplus U(X_\hyp,X_\an).
\end{equation}

(a) and (b):  Let $\cS_1$ denote the subalgebra of $\cF$ generated by $U(X_\hyp,X_\an)$, and  set
\[\cS_2 = \set{\ T\in \cF \suchthat \tr(T) \equiv 0 \pmd\ }.\]
To prove (a) and (b), we must verify that
$\cS_1 = \cS = \cS_2$.     We do this by showing  that
\[\cS_1\subseteq \cS \subseteq \cS_2 \subseteq \cS_1.\]

First, if $i\in \ttJ$ and $x''\in X_\an$, then \eqref{eq:Uident3} implies that
$[U(x_i,x_\prmi),U(x_i,x'')] = U(U(x_i,x_\prmi)x_i,x'') = U(x_i.\overline{\gm_i},\,x'')$.
Thus, $U(x_i.\overline{\gm_i},x'') \in \cF^{(1)} = \cS$. Since $\overline{\gm_i}$ is invertible,
 $U(X_\hyp,X_\an)\subseteq \cS$ by \eqref{eq:Uident1}  and so $\cS_1\subseteq \cS$.

The containment $\cS \subseteq \cS_2$ is a consequence of \eqref{eq:trace1}.

Finally,  we assume that $T\in \cS_2$ and show that $T\in \cS_1$.
Since $U(X_\hyp,X_\an) \subseteq \cS_1$, we can suppose by \eqref{eq:FU} that
\[ T\in \big(U(X_\hyp,X_\hyp)\oplus U(X_\an,X_\an)\big)\cap\cS_2. \]
Now if $x',y'\in X_\hyp$ and $x'',y''\in X_\an$, we have
\begin{align*}
[U(x',x''),U(y',y'')] &= U(U(x',x'')y',y'') +  U(y',U(x',x'')y'')\\
&=-U(x''.\xi(x',y'),\,y'') +  U(y',\,x'.\xi(x'',y''))
\end{align*}
and so
\begin{equation}
\label{eq:inSU1one}
U(x''.\xi(x',y'),\,y'') -  U(y',\,x'.\xi(x'',y''))\in \cS_1.
\end{equation}
If we take $x' = x_i$ and $y' = x_\prmi.\gm_i^{-1}$, where $i\in \ttJ$, then
$\xi(x',y') = 1$ and so $U(x'',y'') - U(y',\,x'.\xi(x'',y''))\in \cS_1$.
Subtracting elements of this form from $T$,  we can assume that
\begin{equation}
\label{eq:inSU1two}
T\in U(X_\hyp,X_\hyp)\cap\cS_2.
\end{equation}
For the rest of  the proof of (b),   we fix  $j\in \ttK$
and let
\[x'' = x_j, \quad \gm = \gm_j = \xi(x'',x'')\andd y'' = x''.\gm^{-1}. \]
Since $\xi(x'',y'') = 1$,   we have
$U(x''.\xi(x',y'),\,x''. \gm^{-1}) -  U(y',x')\in \cS_1$ by \eqref{eq:inSU1one}.
Thus
\begin{equation}
\label{eq:inSU1three}
U(x',y') + U(x'',\,x''. \gm^{-1}\xi(y',x'))\in \cS_1
\end{equation}
for $x',y'\in X_\hyp$.
Subtracting such elements from $T$,  we can suppose
that $T\in U(x'',\,x''.\cA)\cap \cS_2$ (of course we are no longer assuming
\eqref{eq:inSU1two}).    Now by \eqref{eq:Uident2}, we see that
$T\in U(x'',\,x''.\cA_-)\cap \cS_2$, so we can assume
\[T = U(x'',\,x''. b),\]
where $b\in \cA_-$.  Then by \eqref{eq:trace3},
$\tr(T) = -\xi(x'',\,x''. b) + \xi(x''. b,\,x'')
= -\xi(x'',x'')b -b \xi(x'',x'') = -\gm b - b\gm$ and so $\gm b + b\gm \in [\cA,\cA]$.
Therefore $\gm b\in [\cA,\cA]$ and so
\[T \in U(x'',\,x''. \gm^{-1}[\cA,\cA]).\]
Now for $x',y'\in X_\hyp$ and $\al\in \cA$,
\[U(x'.\al,\,y') + U(x'',\,x''. \gm^{-1}\xi(y',\,x'.\al))\in \cS_1\]
and
\[U(x',\,y'.\overline\al) + U(x'',\,x''. \gm^{-1}\xi(y'.\overline \al,x'))\in \cS_1\]
by \eqref{eq:inSU1three}.
Taking the difference,  we get $U(x'',\,x''. \gm^{-1}[\xi(y',x'),\al])\in \cS_1$.
Since $\xi(X_\hyp,X_\hyp) = \cA$, we have $U(x'',\,x''. \gm^{-1}[\cA,\cA])\subseteq \cS_1$;  hence $T\in \cS_1$
and $\cS_2 \subseteq \cS_1$.

(c)  Assume $T = \sum_{i,j \in \ttI}  e_{ij}(\al_{ij}) \in \centre(\cS)$
and set $U_k : = u_{kk}(1) = e_{kk}(1) - e_{\prmk \prmk}(1)$ for $k \in \ttJ$.
By \eqref{eq:trace0},
$\tr(U_k)= 0$,  and so $U_k  \in \cS$ by (a).
The relation $U_k  T = T U_k$  implies that
\begin{equation*}
\sum_j e_{kj}(\al_{kj}) - \sum_{j} e_{\prmk j}(\al_{\prmk j})  = \sum_i e_{ik}(\al_{ik}) - \sum_i
e_{i\prmk}(\al_{i\prmk}).
\end{equation*}
Hence, it must be that $\al_{ik} = 0 = \al_{i\prmk}$ and $\al_{kj} = 0 = \al_{\prmk j}$
for $i,j \neq k,\prmk$ and  that $\al_{k \prmk} = 0 = \al_{\prmk k}$ for all $k \in \ttJ$.
Thus, we may assume
\begin{equation}
\label{eq:zrel1} T = \sum_{\substack{ k \in \ttJ \\ k < \prmk}}
\Big( e_{kk}(\al_k) - e_{\prmk \prmk}(\gm_k^{-1} \overline
{\al_k} \gm_k) \Big) + \sum_{i,j \in \ttK} e_{ij}(\al_{ij}), \end{equation}
where $\al_{ji} = -\gm_j^{-1}\overline {\al_{ij}}\gm_i$ for $i,j \in \ttK$.
Suppose $\ell \in \ttJ$, $\ell < \prml$, and $m \in \ttK$.   Then $u_{\ell m}(\beta) =
e_{\ell m}(\beta) - e_{m \prml}(\gm_m^{-1} \overline \beta \gm_\ell)$ belongs
to $\cS$,  as it has trace 0 for all $\beta \in \cA$.      {F}rom
$u_{\ell m}(\beta) T = T u_{\ell m}(\beta)$ we deduce
\begin{equation}
\label{eq:zrel2}
e_{m \prml}(\gm_m^{-1}\overline \beta \overline {\al_\ell} \gm_\ell)
+ \sum_{j \in \ttK} e_{\ell j}(\beta \al_{m j}) = e_{\ell m}(\al_\ell \beta) - \sum_{i \in \ttK}
e_{i \prml}(\al_{i m} \gm_m^{-1} \overline \beta \gm_\ell).
\end{equation}
Therefore, $\beta \al_{m j} = 0$ for all $j \in \ttK, j \neq m$, and all $\beta$,
forcing $\al_{m j} = 0$ for all $j$ different from $m$.       Moreover, it follows from \eqref{eq:zrel2} that
$\beta \al_{m m} = \al_\ell \beta$ for all $\beta \in \cA$.   In particular
when $\beta = 1$, we obtain $ \al_\ell = \al_{m m}$ for $\ell \in \ttJ$, $m \in \ttK$.
Since both $\ttJ$ and $\ttK$ are assumed to be nonempty,  there is a unique element of $\cA$,
call it $\zeta$, so that $\zeta = \al_\ell = \al_{m m} $ for all $\ell \in \ttJ$, $m \in \ttK$,
and $\zeta \in \centre(\cA)$,  as it commutes with all $\beta$.    The $(m,\prml)$-term
of \eqref{eq:zrel2} gives $\gm_m^{-1} \overline \beta \, \overline \zeta \gm_\ell
= -\zeta \gm_{m}^{-1} \overline \beta \gm_\ell$ for all $\beta \in \cA$.
Taking $\beta = 1$ and using the fact that $\zeta$ is central, we see that
$\overline \zeta = - \zeta$, so that $\zeta \in \cA_-$.   Substituting these results
back into \eqref{eq:zrel1}, we have
\begin{equation}
\label{eq:zrel3}
T = \sum_{\substack{k \in \ttJ\\ k < \prmk}}
\Big(e_{k k}(\zeta) - e_{\prmk \prmk}(\gm_k^{-1}
\overline \zeta \gm_k)\Big) + \sum_{j \in \ttK}  e_{j j}(\zeta) = \sum_{s \in \ttI} e_{s s}(\zeta).
\end{equation}
Since elements of $\cS$ are finite sums, it must be that $\zeta = 0$ when $| \ttI | = \infty$.
Thus, the centre is trivial when $\ttI$ is infinite.  So we may assume that $| \ttI |$ is finite and not
divisible by the characteristic of $\F$.  Then  since
$\tr(T) \equiv 0  \ \ \pmd$ by (a), we see from \eqref{eq:zrel3}  that $0 \equiv | \ttI | \zeta  \ \ \pmd$.
Then, $\zeta \in [\cA,\cA]$
and $T = R_\zeta$, where  $R_\zeta(x) = x.\zeta$ for all $x \in X$.  We have proved that
$\centre(\cS) \subseteq \set{R_\zeta \suchthat \zeta \in \centre(\cA)\cap\cA_- \cap[\cA,\cA]}$.
Thus, by Lemma  \ref{lem:torusfact}, $Z(\cS) = 0$.

(d) The same proof given for \cite[Lem.~III.3.21]{AABGP} works here.    That  argument uses
part (c) above, Lemma \ref{lem:torusfact} and the equation $\varpi(\tr(T^*)) = \varpi(\tr(T))$ for $T\in\cE$.
This last equation can be checked for example
using \eqref{eq:trace2}.
\end{proof}

\begin{corollary}
\label{cor:SU}
 Under the hypotheses of Theorem \ref{thm:SU},  suppose that
$T\in \cF$ has an expression
\[T = \textstyle\sum_{i<j}U(x_i.\al_{ij},\,x_j)
+ \sum_{i} U(x_i.b_i,\,x_i)\]
as in \eqref{eq:Uexp}
(resp.~$T = \sum_{i< \prmj} u_{ij}(\al_{ij}) + \sum_{i} u_{i\prmi}(b_i\gm_i)$ as in \eqref{eq:uexp}).
Then $T \in \cS$ if and only if
\[\textstyle \sum_{i< \prmi}(\al_{i\prmi}\overline{\gm_i}- \gm_i\overline{\al_{i\prmi}})
+ \sum_{i=\prmi}  (b_i\gm_i + \gm_i b_i) \equiv 0 \ \  \pmd\]
(resp.,
$\sum_{i< \prmi}(\al_{ii}- \overline{\al_{ii}}) + \sum_{i=\prmi}  (b_i\gm_i +  \gm_i b_i) \equiv 0 \ \ \ \pmd$).
\end{corollary}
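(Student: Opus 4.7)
The proof is a direct computation of $\tr(T) \pmod{[\cA,\cA]}$ in each of the two expressions, followed by an application of Theorem \ref{thm:SU}(a), which tells us that $T \in \cS$ if and only if $\tr(T) \equiv 0 \pmod{[\cA,\cA]}$.

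For the first expression, I would compute the trace term by term using identity \eqref{eq:trace4}. Since $\tr(U(x_i.\al,x_j.\beta)) \equiv \delta_{i\prmj}(\al\overline\beta\overline{\gm_i} - \gm_i\beta\overline\al) \pmod{[\cA,\cA]}$, only the off-diagonal terms indexed by $i < j$ with $j = \prmi$ (that is, $i < \prmi$) contribute, yielding $\sum_{i < \prmi}(\al_{i\prmi}\overline{\gm_i} - \gm_i\overline{\al_{i\prmi}})$. For the diagonal terms $U(x_i.b_i,x_i)$, only the case $i = \prmi$ survives, and using \eqref{eq:gmbar} we have $\gm_i = \overline{\gm_i}$ when $i = \prmi$, while $\overline{b_i} = -b_i$ since $b_i \in \cA_-$. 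Hence the contribution is $\sum_{i=\prmi}(b_i\gm_i + \gm_i b_i)$. Summing and invoking Theorem \ref{thm:SU}(a) delivers the first claimed criterion.

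For the second expression, I would proceed analogously using \eqref{eq:trace5}, which gives $\tr(u_{ij}(\al)) \equiv \delta_{ij}(\al - \overline\al) \pmod{[\cA,\cA]}$. Among the terms $u_{ij}(\al_{ij})$ with $i < \prmj$, only those with $i = j$ (which also satisfy $i < \prmi$) give a nonzero trace contribution, namely $\al_{ii} - \overline{\al_{ii}}$. For the terms $u_{i\prmi}(b_i\gm_i)$, the trace is nonzero only when $i = \prmi$, in which case $\gm_i = \overline{\gm_i}$ and $\overline{b_i} = -b_i$, giving $b_i\gm_i - \overline{b_i\gm_i} = b_i\gm_i + \gm_i b_i$. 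Adding and again applying Theorem \ref{thm:SU}(a) yields the second criterion.

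The proof involves no real obstacle beyond careful bookkeeping of indices; the only small point to watch is recognising that $j = \prmi$ (resp.\ $i = j$) is exactly the condition that singles out surviving terms under $\tr \pmod{[\cA,\cA]}$, and that the hypotheses $b_i \in \cA_-$ together with $\gm_i = \overline{\gm_i}$ (when $i = \prmi$) convert the raw traces into the symmetric-looking expressions stated in the corollary.
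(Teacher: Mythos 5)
Your proposal is correct and is exactly the paper's argument: the paper proves the corollary by invoking Theorem \ref{thm:SU}\,(a) and computing $\tr(T)$ via \eqref{eq:trace4} (resp.\ \eqref{eq:trace5}), which is precisely the term-by-term bookkeeping you carry out, including the use of $\gm_i=\overline{\gm_i}$ when $i=\prmi$ and $\overline{b_i}=-b_i$.
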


\begin{proof}  This follows from Theorem \ref{thm:SU}~(a) and a calculation of $\tr(T)$
using  \eqref{eq:trace4}   (resp.~\eqref{eq:trace5}).
\end{proof}

We  now use matrix notation to give an explicit
description   of the $\G$-gradings on $\cE$ and $\cF$ induced by the $\G$-grading on $X$.

\begin{proposition}
\label{prop:gradingmatrix}  Let $\rho_i = \deg_\G(x_i)$ for $i\in \ttI$.
Then, $\deg(\gm_i) = \rho_i + \rho_\prmi$ for $i\in \ttI$.
Moreover, the $\G$-gradings on $\cE$ and $\cF$
are given respectively by
\begin{gather}
\notag
\deg_\G(e_{ij}(\al)) = \rho_i - \rho_j + \deg_\G(\al),
\\
\deg_\G(u_{ij}(\al)) = \rho_i - \rho_j + \deg_\G(\al)
\end{gather}
for $\al$ homogeneous in $\cA$, $i,j\in \ttK$.
\end{proposition}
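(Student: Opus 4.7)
The plan is to verify the three claims by unwinding the definitions and invoking the $\G$-graded nature of $\xi$.

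First I would establish $\deg_\G(\gm_i) = \rho_i + \rho_{\prmi}$. Since $\xi$ is a $\G$-graded hermitian form and $x_i, x_{\prmi}$ are homogeneous of degrees $\rho_i, \rho_{\prmi}$ respectively, the identity $\xi(x_i, x_{\prmi}) = \gm_i$ (coming from \eqref{eq:ximat}) forces $\gm_i \in \cA^{\rho_i + \rho_{\prmi}}$. Since $\gm_i$ is a nonzero homogeneous element (the torus $\cA$ has one-dimensional graded pieces, and $\gm_i$ is invertible), this determines its degree.

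Second, I would compute $\deg_\G(e_{ij}(\al))$ for $\al$ homogeneous. By \pref{pgraph:graded}\,(g), an element $T\in \End_\cA(X)$ has degree $\sg$ exactly when $T(X^\tau)\subseteq X^{\sg+\tau}$ for all $\tau\in \G$. From the defining formula $e_{ij}(\al)x_t = \delta_{jt}\, x_i.\al$ in \eqref{eq:eij}, we see that $e_{ij}(\al)$ annihilates every $X^\tau$ not containing a nonzero $\cA$-multiple of $x_j$, while the image $x_i.\al$ lies in $X^{\rho_i + \deg_\G(\al)}$. Because $\set{x_i}_{i\in \ttI}$ is a homogeneous $\cA$-basis, $e_{ij}(\al)$ sends the homogeneous component $x_j.\cA^{\tau - \rho_j}$ into $x_i.\cA^{\tau - \rho_j + \deg_\G(\al)}$, which lies in $X^{\rho_i - \rho_j + \deg_\G(\al) + \tau}$. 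Hence $e_{ij}(\al)$ is homogeneous of degree $\rho_i - \rho_j + \deg_\G(\al)$.

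Third, I would verify the formula for $u_{ij}(\al)$. By \eqref{eq:udef}, $u_{ij}(\al) = e_{ij}(\al) - e_{\prmj\,\prmi}(\gm_j^{-1}\overline\al\gm_i)$. The first summand has degree $\rho_i - \rho_j + \deg_\G(\al)$ by the previous step. For the second, the coefficient $\gm_j^{-1}\overline\al\gm_i$ is homogeneous of degree $-(\rho_j + \rho_{\prmj}) + \deg_\G(\al) + (\rho_i + \rho_{\prmi})$, using the step one formula and the fact that the involution preserves the grading. Thus the second summand has degree
\[
\rho_{\prmj} - \rho_{\prmi} + \bigl(-(\rho_j + \rho_{\prmj}) + \deg_\G(\al) + (\rho_i + \rho_{\prmi})\bigr) = \rho_i - \rho_j + \deg_\G(\al),
\]
matching the first. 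Hence $u_{ij}(\al)$ is homogeneous of the asserted degree.

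Nothing here is difficult; the only subtle point is checking that the two summands defining $u_{ij}(\al)$ indeed carry the same $\G$-degree, which is precisely where the identity $\deg_\G(\gm_i) = \rho_i + \rho_{\prmi}$ from step one is essential. That compatibility is the content of the claim, and the computation would not close without it.
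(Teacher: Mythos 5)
Your argument is correct and amounts to the same definition-unwinding the paper uses: the degree of $\gm_i = \xi(x_i,x_{\prmi})$ from the gradedness of $\xi$, and the degree of the matrix units from the induced grading on $\End^\gr_\cA(X)$. The only cosmetic difference is that you compute $\deg_\G(e_{ij}(\al))$ directly from its action on the homogeneous components of $X$ and then handle $u_{ij}(\al)$ via the two-term expression \eqref{eq:udef}, whereas the paper reads both degrees off from \eqref{eq:Etoe} and \eqref{eq:Utou} together with the description of $\cE^\tau$ and $\cF^\tau$ in \pref{pgraph:gradedunitary}; both routes are equally valid.
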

\begin{proof} The first relation follows from the fact that $\gm_i = \xi(x_i,x_\prmi)$,
The remaining equations come  from the definition of the gradings (see \pref{pgraph:gradedunitary}),
\eqref{eq:Etoe} and \eqref{eq:Utou}.
\end{proof}

\begin{remark}
\label{rem:gradingmatrix}
If $\xi$ not hyperbolic or anisotropic, Proposition \ref{prop:gradingmatrix} also tells us
the $\G$-grading on $\cS$,
since we know the precise form of elements of $\cS$ by Corollary~\ref{cor:SU}.
\end{remark}

\begin{remark}
\label{rem:ungraded} For the sake of clarity and
ease of reference,  in this section  we have chosen to work in the
context of graded hermitian forms over associative tori with involution.
However,  most of the results of  Section \ref{subsec:unitary}
hold in a more general ungraded setting.  Indeed,  suppose
that $\xi: X\times X \to \cA$ is a hermitian form over an associative algebra with involution $(\cA,-)$ on a free $\cA$-module
$X$ with $\cA$-basis $\set{x_i}_{i\in \ttI}$ satisfying \eqref{eq:ximat},
where $\gm_i$ is an invertible element of $\cA$ for
$i\in \ttI$,  and $i\mapsto \prmi$ is a permutation of period 2 of
$\ttI$; and suppose that we \emph{define} submodules $X_\hyp$ and $X_\an$ by
\eqref{eq:defhyp}.  Then all of the results of the section
hold (with the same proofs) except for  Theorem \ref{thm:SU}~(c),
Theorem \ref{thm:SU}~(d) and Proposition \ref{prop:gradingmatrix}.
The last two of these make no sense in the ungraded setting.
In the case of Theorem \ref{thm:SU}~(c), the same proof shows that the following
statement is true:  If $\left| \ttI \right|$ is infinite, then $\centre(\cS) = 0$; and if
$\left| \ttI \right|$ is finite and  not divisible by $\characteristic(\F)$,
then
\[\centre(\cS) = \set{R_\zeta \suchthat \zeta \in \centre(\cA)\cap\cA_- \cap[\cA,\cA]},\]
where $R_\zeta:X \to X$ is defined by $R_\zeta x = x.\zeta$ for $x\in X$.
Indeed, the proof of Theorem \ref{thm:SU}\,(c) establishes  all of this except
for the inclusion ``$\supseteq$'' which is easily checked using the fact that $T = R_\zeta$
can be expressed as   in \eqref{eq:zrel3}.
\end{remark}

\section[Unitary Lie tori]{\cm UNITARY LIE TORI AND THE MAIN THEOREMS}
\label{sec:main}

Throughout this  chapter, we assume that \emph{$\F$ has characteristic 0}.

We use the special unitary Lie algebra to
give a construction of a centreless Lie $\G$-torus of type $\BC_r$ and refer to the
resulting algebra as a
unitary Lie $\G$-torus.    Then we  show that if $r\ge 3$, any  centreless Lie $\G$-torus of
type $\BC_r$
is obtained in this way, and we determine when two such unitary Lie $\G$-tori are bi-isomorphic.

\subsection{Construction of unitary Lie $\G$-tori}
\label{subsec:BCconstruct}  \

\begin{assumptions}\label{asspts:uLietorus}\
Throughout Section \ref{subsec:BCconstruct}  we assume that:
\begin{itemize}
\item[(H1)]   $\rk$ is an integer $\ge 1$ and $\G$ is an abelian group.
\item[(H2)] $\Llat$ is a subgroup of $\G$, and
$(\cA,-)$ is an associative $\Llat$-torus with involution over $\F$  (which we regard as $\G$-graded by setting  $\cA^\sg = 0$ for $\sg \in \G \setminus L$).
\item[(H3)]   $\xi : X\times X \to \cA$ is a nondegenerate
$\G$-graded hermitian form of finite graded $\F$-dimension and Witt index $r$.
\item[(H4)]
For some Witt decomposition $X = X_\hyp \perp X_\an$ (and hence for
all Witt decompositions of $\xi$ by  Theorem \ref{thm:herm}\,(c)),
\begin{itemize}
\item[(a)]  $\G_\hyp = \Llat$,
\item[(b)] the restriction of $\xi$ to $X_\an^0$  represents $1$,
\item[(c)] $X_\an$ is finely graded.
\end{itemize}
\item[(H5)] $X$ has full support in $\G$.
\end{itemize}
\end{assumptions}

\begin{remark}  It follows from Proposition
\ref{prop:anisotropic} that (H4)(c) is redundant if $\F$ is algebraically closed.
\end{remark}

\begin{pgraph} \ptitle{The construction}
\label{con:uLietorus}
Let
\[\cF = \ffu(X,\xi) \andd \cS = \fsu(X,\xi),\]
so $\cS \subseteq \cF$.

To define gradings on $\cF$ and $\cS$, and to study the properties of $\cF$ and $\cS$,
we select an $\cA$-basis for $X$.  Indeed, by Theorem \ref{thm:herm}\,(a) and (b)
and by (H4)(a) and (b), we can choose a homogeneous $\cA$-basis  $\set{x_i}_{i\in\ttI}$ for
$X$  satisfying the following properties:
$X= X_\hyp \perp X_\an$ is a Witt decomposition of $\xi$ with
\[X_\hyp = \bigoplus_{i\in \ttJ} x_i.\cA \andd X_\an = \bigoplus_{i\in \ttK} x_i.\cA,\]
where $\ttI = \ttJ \distu \ttK$ with $\ttJ = \set{1,\dots,\twor}$;
$\deg_\G(x_i) = 0$ for $i\in \ttJ$ and $\deg_\G(x_{k_0}) = 0$ for some $k_0\in \ttK$;
and
\begin{equation*}
\label{eq:hypbasis}
\xi(x_i,x_j) = \gamma_i\delta_{i \prmj}
\end{equation*}
for $i,j\in \ttI$, where  $i\mapsto \bar\imath$ is the permutation of $\ttI$ defined by
\[\prmi = \twor + i - 1 \text{ for $i\in \ttJ$} \andd \prmi = i \text{ for $i\in \ttK$,}\]
and where $\gm_i$ is nonzero and homogeneous in $\cA$ for $i\in \ttI$ with
\begin{equation}
\label{eq:gammacond}
\gm_i = 1 \qquad\text{for } i\in \ttJ\cup\set{k_0}.
\end{equation}
We note that $\ttJ$, $\ttK$, $X_\hyp$, $X_\an$, $k_0$, the permutation $i\mapsto \prmi$ of $\ttI$,
and the  set $\set{\gm_i}_{i\in \ttI}$ are completely determined by this choice of $\cA$-basis.
We call  $\set{x_i}_{i\in \ttI}$ a \emph{compatible} $\cA$-basis for $\xi$.

We let
\begin{equation}
\label{eq:Slat}
\rho_i = \deg_\G(x_i) \text{ for $i\in \ttI$},
\end{equation}
so that
\begin{equation}
\label{eq:rhogam}
\rho_i = 0 \text{ for } i\in \ttJ\cup\set{k_0} \andd \gm_i = \xi(x_i,x_\prmi)\in \cA_+^{2\rho_i} \text{ for } i\in \ttI.
\end{equation}
We set $\vz= x_{k_0}$ and observe that then $\xi(v_0,v_0) = 1$.

We are now ready to define the gradings on $\cF$ and $\cS$.  First, we have the assumptions of
Section  \ref{subsec:BCunitary}, and as in that section,  we
let
\begin{equation}
\label{eq:hdef}
\hd = \bigoplus_{i=1}^\rk\F h_i, \quad \text{where }
h_i = U(x_i,x_\prmi) \text{ for } 1\le i \le \rk,
\end{equation}
and
\begin{equation} \label{eq:BCr2}
\Dl
= \set{\ep_i\mid i\in\ttJ}\cup\{\ep_i+\ep_j\mid i,j\in\ttJ,\ j\ne \prmi\},
\end{equation}
where $\set{\ep_1,\dots,\ep_\rk}$ in $\hd^*$ is the dual basis of $\set{h_1,\dots,h_\rk}$
and $\ep_\prmi = -\ep_i$ for $1\le i \le \rk$.
Thus, $\Dl$ is a root system of type $\BCr$ in $\hd^*$ with root lattice
\[Q = \bbZ \ep_1\oplus \dots \oplus \bbZ \ep_\rk.\]
The natural action of $\hd$ on $X$ gives the
weight space decomposition,
 \[X = \textstyle\bigoplus_{\mu\in Q}X_\mu\]
with
$\supp_Q(X) = \set{0}\cup \set{\ep_i \suchthat i\in\ttJ}$ and
\[
X_{\ep_i} =  x_i.\cA \ \hbox{\rm for}\  \  i \in \ttJ \  \andd X_0 =  X_\an.
\]
It is clear that this grading is compatible with the given $\G$-grading
on $X$, so $X$ is $(Q\times \G)$-graded with
\[X_\mu^\sigma = X_\mu \cap X^\sigma\]
for $\mu\in Q$, $\sigma\in \G$.
The $(Q\times \G)$-grading on $X$ induces a $(Q\times \G)$-grading
on $\cF$ with
\begin{equation}
\label{eq:FUgrading}
\cF_\nu^\tau = \textstyle \sum_{\rho+\sg=\tau}  \sum_{\lambda+\mu=\nu} U(X_\lambda^\rho,X_\mu^\sg),
\end{equation}
for  $\nu\in Q$, $\tau \in \G$ (see \pref{pgraph:gradedunitary}).
Also, since $\cS$ is generated by homogeneous elements of $\cF$,
it follows that $\cS$ is a $(Q\times \G)$-graded subalgebra of $\cF$.
The $\G$-gradings on $\cF$ and $\cS$  are induced by the $\G$-grading
on $X$, so they do not depend on our choice of compatible basis;
 and the $Q$-gradings on these algebras are the root space decompositions
relative to the adjoint action of $\hd$ (as in Section \ref{subsec:BCunitary}).
\end{pgraph}

\begin{pgraph}
We will see in
Theorem \ref{thm:biisomorphism} that the graded Lie algebras
$\cF$ and $\cS$ just described are independent up to bi-isomorphism
of the choice of a compatible $\cA$-basis $\set{x_i}_{i\in \ttI}$ for $\xi$.
We will also see in Theorem \ref{thm:structure}~(a) that $\cS$ is a centreless
Lie $\G$-torus of type $\BCr$.
 In anticipation of these results, we adopt the following terminology:

\begin{definition}
\label{def:uLie}
The $(Q\times \G)$-graded Lie algebra
$\cS = \fsu(X,\xi)$ constructed in \pref{con:uLietorus}
is called the \emph{unitary Lie $\G$-torus} (of type $\BCr$) constructed from $\xi$.
\end{definition}
\end{pgraph}

\subsection{Properties of unitary Lie tori}\
\label{subsec:propunitaryLietorus}

Our assumptions throughout this section are those of \pref{asspts:uLietorus}.  We fix
a compatible $\cA$-basis $\set{x_i}_{i\in\ttI}$ for $\xi$ and use the notation of
\pref{con:uLietorus}.  We record some properties of the $(Q\times \G)$-graded Lie algebras
$\cF$ and $\cS$ as well as of the $\G$-graded $\cA$-module $X$.

We have the assumptions of Section \ref{subsec:BCunitary}, so we can construct $\BCr$-graded
unitary Lie algebra $\cB = \fbu(X,\xi)$ as in that section.    Also, we have
the assumptions of Section \ref{subsec:unitary}, so we can apply the results of that section.

\begin{proposition} \
\label{prop:propSU}
\begin{itemize}
\item[(a)] $\cS = \set{\ T\in \cF \suchthat \tr(T) \equiv 0\ \pmd}$.
\item[(b)]
$\cS$ is generated as an algebra by  $\sum_{i\in\ttJ} \cS_{\ep_i}$.
Consequently $\cS = \cB$.
\item[(c)] $Z(\cS) = 0$.
\end{itemize}
\end{proposition}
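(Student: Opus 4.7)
The plan is to deduce all three parts essentially as specializations or easy consequences of Theorem \ref{thm:SU}, together with the identifications of root spaces from Section \ref{subsec:BCunitary}.

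For part (a), I would first verify that the hypotheses of Theorem \ref{thm:SU} hold in this setting. Since $r\ge 1$ by (H1), the index set $\ttJ = \set{1,\dots,2r}$ is nonempty, so $X_\hyp \ne 0$; and since $\vz = x_{k_0}\in X_\an$ with $\xi(\vz,\vz) = 1$, we have $X_\an \ne 0$. Thus Theorem \ref{thm:SU}\,(a) applies and gives the description of $\cS$ as the kernel (modulo $[\cA,\cA]$) of the trace on $\cF$.

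For part (b), the key observation is to identify $\sum_{i\in\ttJ}\cS_{\ep_i}$ with $U(X_\hyp,X_\an)$. By Proposition \ref{prop:Qgrade}\,(a) we have $\cS_{\ep_i} = \cF_{\ep_i}$, and Proposition \ref{prop:Qgradeexp}\,(c) describes $\cF_{\ep_i}$ as $\set{U(v,x_i)\suchthat v\in X_\an}$. Using the skew-symmetry $U(v,x_i) = -U(x_i,v)$ and the bilinearity identity \eqref{eq:Uident1} (namely $U(x_i.\al,w) = U(x_i,w.\overline\al)$), one checks the two inclusions
\[
\textstyle
\sum_{i\in\ttJ}\cS_{\ep_i} = \sum_{i\in\ttJ}U(X_\an,x_i) = U(X_\hyp,X_\an).
\]
Then Theorem \ref{thm:SU}\,(b) tells us that $\cS$ is generated as an algebra by $U(X_\hyp,X_\an)$, hence by $\sum_{i\in\ttJ}\cS_{\ep_i}$. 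For the identification $\cS = \cB$: on the one hand, by \pref{pgraph:Qgrade} we have $\cB\subseteq \cS$; on the other, since $\cS_{\ep_i} = \cU_{\ep_i}\subseteq \cB$ by the definition of $\cB = \fbu(X,\xi)$, the generation statement just proved gives $\cS\subseteq \cB$.

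For part (c), apply Theorem \ref{thm:SU}\,(c). Since $\F$ has characteristic $0$ by the standing assumption of this chapter, the rank condition on $X$ over $\cA$ is automatically satisfied, so $Z(\cS) = 0$.

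No step is really an obstacle here; the only point that requires a small computation is the identification $\sum_{i\in\ttJ}\cS_{\ep_i} = U(X_\hyp,X_\an)$ in part (b), and this is immediate from the $U$-operator identities of Proposition \ref{prop:identities} combined with the explicit form of root-space elements in Proposition \ref{prop:Qgradeexp}.
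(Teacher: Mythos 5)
Your proposal is correct and follows essentially the same route as the paper: parts (a) and (c) are obtained by invoking Theorem \ref{thm:SU}\,(a) and (c) (with the hypotheses $X_\hyp\ne 0$, $X_\an\ne 0$ and $\characteristic(\F)=0$, which you rightly check), and part (b) comes from identifying $\sum_{i\in\ttJ}\cS_{\ep_i}$ with $U(X_\hyp,X_\an)$ via Proposition \ref{prop:Qgradeexp}\,(c) and then applying Theorem \ref{thm:SU}\,(b), with $\cS=\cB$ following from the definition of $\fbu(X,\xi)$. You simply spell out a few details (the two inclusions for $\cS=\cB$ and the $U$-operator manipulations) that the paper leaves implicit.
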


\begin{proof} Parts (a) and (c) are proved in Theorem \ref{thm:SU}\,(a) and (c).
For (b), we have by Proposition \ref{prop:Qgradeexp}\,(c) that  $U(X_\hyp,X_\an) = \textstyle U(\sum_{i\in\ttJ} x_i.\cA ,X_{\an})
\textstyle =  \sum_{i\in\ttJ} \cS_{\ep_i}.$
Thus, Theorem \ref{thm:SU}\,(b) gives us the first statement of (b).
The second assertion in (b) then follows from the definition of $\cB = \fbu(X,\xi)$.
\end{proof}

Next we describe some  properties of the groups $\G$ and $\Llat$, the support sets  for $X$, $X_\hyp$
and $X_\an$, and the   modules $X$ and $X_\an$.

\begin{lemma}
\label{lem:suppcompare} \
\begin{itemize}
\item[(a)] $\Llat = \G_\hyp \subseteq \G_\an$, $\supp_\G(X) = \G_\an$ and $\langle \G_\an \rangle = \G$.

\item[(b)] $2\G_\an \subseteq \Llat_+$ and $2\G\subseteq \Llat \subseteq \G$.

\item[(c)]
$\G_\an = \distu_{i\in \ttK} (\rho_i + \Llat) = \distu_{i\in \ttK} (-\rho_i + \Llat) $.

\item[(d)] If $\G$ is free of rank $n$, then
$\Llat$ is free of rank $n$.

\item[(e)] $\rank_\cA(X_\an) = \order{\G_\an/\Llat}$
and $\rank_\cA(X) = 2\rk + \rank_\cA(X_\an)$.

\item[(f)] $\rank_\cA(X)$ is finite if and only if $|\G/\Llat |$ is finite.

\item[(g)] If $\G$ is finitely generated,  then $\rank_\cA(X)$ is finite.
\end{itemize}
\end{lemma}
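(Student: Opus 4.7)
The plan is to prove the seven parts in the stated order, since each builds on the previous ones, and to leverage three ingredients throughout: Assumptions (H4)(a)--(c), property (H5), and the structural description of the $\cA$-basis $\set{x_i}_{i\in \ttI}$ given in \pref{con:uLietorus}.

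For part (a), I would first observe that $X_\hyp = \bigoplus_{i\in \ttJ} x_i.\cA$ with $\deg_\G(x_i) = 0$ for $i\in \ttJ$ immediately yields $\G_\hyp = \supp_\G(\cA) = \Llat$, which gives the first equality using (H4)(a). The inclusion $\G_\hyp \subseteq \G_\an$ follows by picking $v_0 = x_{k_0}\in X_\an^0$ (provided by (H4)(b) together with anisotropy) and noting that for any $\sg\in \Llat$ and nonzero $\al\in\cA^\sg$, the element $v_0.\al$ lies in $X_\an^\sg$. Then $\supp_\G(X) = \G_\hyp\cup\G_\an = \G_\an$, and $\langle \G_\an\rangle = \G$ follows from (H5).

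For part (b), I would pick any $\sg\in \G_\an$ and a nonzero homogeneous $x\in X_\an^\sg$. Since $X_\an$ is finely graded (H4)(c) and anisotropic by Theorem \pref{thm:herm}\,(c), Proposition \pref{prop:anisotropic} applies (or one argues directly: if $\xi(x,x) = 0$, fine grading would force $\xi(x,X_\an^{-\sg}) = 0$, contradicting nondegeneracy), so $\xi(x,x)$ is a nonzero element of $\cA_+^{2\sg}$, forcing $2\sg\in \Llat_+$. Combined with $\langle \G_\an\rangle = \G$ from (a), this gives $2\G\subseteq \Llat$. Part (c) then comes from $X_\an = \bigoplus_{i\in\ttK} x_i.\cA$ together with fine grading: the union $\bigcup_{i\in\ttK} (\rho_i+\Llat)$ must be disjoint since a common element would make some $X_\an^\sg$ at least two-dimensional; and $-\rho_i+\Llat = \rho_i+\Llat$ because $2\rho_i\in \Llat$ by (b).

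For part (d), I would use $2\G\subseteq \Llat\subseteq \G$ from (b) to deduce $[\G:\Llat]\le 2^n$ is finite when $\G\simeq \bbZ^n$, and then invoke the standard fact that a finite-index subgroup of a free abelian group of rank $n$ is itself free of rank $n$. For part (e), the formula \eqref{eq:rank} combined with fine grading (so $\dim_\F(X_\an^\sg) = 1$ for $\sg\in \G_\an$) and part (c) gives $\rank_\cA(X_\an) = |\ttK| = |\G_\an/\Llat|$; the rank additivity over the orthogonal decomposition $X = X_\hyp\perp X_\an$ with $\rank_\cA(X_\hyp) = 2\rk$ yields the second formula.

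For part (f), one direction is immediate from (e) and $\G_\an\subseteq \G$. The converse is the main obstacle I anticipate: I would exploit the fact that $\G/\Llat$ has exponent 2 (by (b)), so $\G/\Llat$ equals the $\bbZ_2$-span of the image of $\G_\an$, and since $\langle \G_\an\rangle = \G$, a finite $\G_\an/\Llat$ forces $\G/\Llat$ to be a finitely generated $\bbZ_2$-vector space, hence finite. Finally, (g) is immediate: $\G$ finitely generated and $2\G\subseteq \Llat$ makes $\G/\Llat$ a quotient of the finite group $\G/2\G$, so (f) applies. The one subtlety to keep track of throughout is that all the arguments using fine grading depend critically on $X_\an$ being finely graded, which is precisely what (H4)(c) supplies, and that $\ttK$ may be infinite but is indexed compatibly with the coset structure of $\G_\an/\Llat$.
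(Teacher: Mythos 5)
Your proposal is correct and follows essentially the same route as the paper: (a) from (H4)(a),(b) and (H5); (b) from anisotropy of $X_\an$ in the Witt decomposition; (c) from fine grading of $X_\an$ and $2\G\subseteq \Llat$; and (d)--(g) from the exponent-2 quotient $\G/\Llat$ together with \eqref{eq:rank}. The only small caveat is that your appeal to Proposition \pref{prop:anisotropic} in (b) is unnecessary (and its converse direction would require $\F$ algebraically closed, while its forward direction would be circular here); the anisotropy of $X_\an$ already comes directly from Theorem \pref{thm:herm}\,(c), which is what both you and the paper actually use.
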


\begin{proof} (a) The first statement follows from (H4)(a) and (b); the second statement follows
from the first; and then the third statement follows from (H5).

(b)  If $\sg\in \G_\an$, then there exists a nonzero $x\in X_\an^\sg$, so
$0\ne \xi(x,x)\in \cA^{2\sg}_+$ and hence $2\sg\in \Llat_+$.   So
$2\G_\an \subseteq \Llat_+$; hence $2\G\subseteq \Llat$ since $\langle \G_\an \rangle = \G$.

(c) This is clear since $\set{x_i}_{i\in \ttK}$ is an $\cA$-basis for $X_\an$.
(We know that $\rho_i + \Llat = -\rho_i + \Llat$ for $i\in\ttK$  since $2\G \subseteq \Llat$.)

(d)  follows from (b).

(e) follows from \eqref{eq:rank} since $X_\an$ is finely  graded.

(f) The group $\G/\Llat$ is generated by the set $\G_\an/\Llat$ by (a).  Hence, since $\G/\Llat$ has exponent 2
by (b), we see that $\G/\Llat$ is finite if and only if $\G_\an/\Llat$ is finite.  The claim now
follows from (e).

(g) Since $\G/\Llat$ has exponent 2, (g) follows from (f).
\end{proof}

The  next example, which  is a special case of the construction in
\cite[\S 7]{Y3}, shows that $X$ may have infinite rank over $\cA$.

\begin{example}
Suppose
$\G$ is an abelian group and $\Llat $ is a subgroup of $\G$ containing $2\G$.
Let $\cA = \F[\Llat] = \bigoplus_{\sg\in \Llat}\F t^\sg$ (the group algebra of $\Llat$) with the identity involution~$-$; let
$X = \cA^{2r}\oplus \F[\G]$ with the natural $\G$-grading and the natural right $\cA$-module structure; and let
$\xi : X \times X \to \cA$ be the unique symmetric $\cA$-bilinear form such that
$\xi(e_i,e_j) = \delta_{i\prmj}$, where $e_1,\dots,e_{2\rk}$ is the standard basis for $\cA^{2\rk}$,
$\xi(\cA^{2\rk},\F[\G]) = 0$, and $\xi(t^\sg,t^\tau) = \delta_{\sg+\Llat,\tau+\Llat}t^{\sg + \tau} $
for $\sg,\tau\in \G$.
Then Assumptions \ref{asspts:uLietorus}, with $X_\hyp = \cA^{2\rk}$ and $X_\an = \F[\G]$, are easily verified.
If $\G/\Llat$ is infinite, $X$ has infinite rank over  $\cA$.
\end{example}

If $\mu\in \Dl$ and $\sg\in \G$, then $\cF_\mu^\sg = \cS_\mu^\sg$ by Proposition \ref{prop:Qgrade}~(a).
We now describe these spaces explicitly.

\begin{proposition}
\label{prop:SUgradingexp}
Let $\mu\in \Dl$ and $\sg\in\G$.
In  each of the following cases,
a general element of $\cS_\mu^\sg$ has a unique expression in the indicated form.
Moreover, if  $\mu$ and $\sg$ are not covered by any of the cases below,
then  $\cS_\mu^\sg = 0$.
\begin{tabbing}
\hspace{.1truein}
\=\hspace{.25truein}
\=\hspace{2.5truein}
\=\kill
\> {\rm (a)}
\> $\mu=\ep_i+\ep_j$, \ $1\le i < j \le 2\rk$,\ $j \ne \bar \imath$,\  $\sg\in \Llat$:
\> \quad $U(x_i.\al,x_j)$, \ $\al\in\cA^\sg$\\
\> {\rm (b)}
\> $\mu=2\ep_i$, \ $1\le i  \le 2\rk$,\  $\sg\in \Llat_-$:
\> \quad $U(x_i.b,x_i)$, \ $b\in\cA_-^\sg$\\
\> {\rm (c)}
\> $\mu=\ep_i$, \ $1\le i  \le 2\rk$,\  $\sg\in \G_\an$:
\> \quad $U(v,x_i)$, \ $v\in X_\an^\sg$.
\end{tabbing}
\end{proposition}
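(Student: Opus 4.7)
The plan is to deduce the result directly from the $(Q \times \G)$-graded refinement of Proposition \ref{prop:Qgradeexp}, combined with the explicit weight and $\G$-grading description of $X$ from \pref{con:uLietorus}. First I would note that since each $\mu$ in (a), (b), (c) is a nonzero root, Proposition \ref{prop:Qgrade}(a) gives $\cS_\mu^\sg = \cF_\mu^\sg$, so it suffices to work with $\cF$. The key ingredients are: the weight-space description $X_{\ep_i} = x_i . \cA$ for $i \in \ttJ$ with $\deg_\G(x_i) = 0$ (hence $X_{\ep_i}^\rho = x_i . \cA^\rho$, which is nonzero exactly when $\rho \in \Llat$) and $X_0 = X_\an$; the graded formula \eqref{eq:FUgrading}; and the $U$-operator identities \eqref{eq:Uident1}, \eqref{eq:Uident2}.

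For case (a), with $i < j$, $j \neq \bar\imath$, the only decomposition $\mu = \lambda + \nu$ in $\supp_Q(X)+\supp_Q(X)$ is (up to swap) $\lambda = \ep_i$, $\nu = \ep_j$, so \eqref{eq:FUgrading} gives $\cF_\mu^\sg = \sum_{\rho+\tau=\sg}U(x_i . \cA^\rho,\, x_j . \cA^\tau)$. The identity $U(x_i . \al, x_j . \beta) = U(x_i . (\al\overline\beta), x_j)$ collapses this to $U(x_i . \cA^\sg, x_j)$, which requires $\sg \in \Llat$. For case (b), only $\lambda = \nu = \ep_i$ works; using \eqref{eq:Uident1} we reduce $U(x_i . \cA^\rho, x_i . \cA^\tau)$ to $U(x_i . \cA^{\rho+\tau}, x_i)$, and \eqref{eq:Uident2} (i.e.\ the vanishing on $\cA_+$) shows this is $U(x_i . \cA_-^\sg, x_i)$, nonzero only for $\sg \in \Llat_-$. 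For case (c), the only ways to split $\ep_i$ among the weights of $X$ are $(0, \ep_i)$ and $(\ep_i, 0)$, so after applying \eqref{eq:Uident1} to absorb the right-hand scalar, $\cF_{\ep_i}^\sg = U(X_\an^\sg, x_i)$, nonzero exactly when $\sg \in \G_\an$.

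Uniqueness in each case is checked by evaluating the given operator on $x_{\bar\jmath}$ or $x_{\bar\imath}$ and using \eqref{eq:ximat} with $\gm_i$ invertible. Concretely: in (a), $U(x_i . \al, x_j) x_{\bar\jmath} = x_i . \al \gm_j - x_j . \overline\al\,\xi(x_i, x_{\bar\jmath}) = x_i . \al \gm_j$ because $i \neq j$; in (b), $U(x_i . b, x_i) x_{\bar\imath} = x_i . (b - \overline b)\gm_i = 2x_i . b\gm_i$, which uses $\characteristic(\F) = 0$; in (c), $U(v, x_i) x_{\bar\imath} = v . \gm_i - x_i . \xi(v, x_{\bar\imath}) = v . \gm_i$ because $v \in X_\an$ is orthogonal to $X_\hyp$. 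Finally, for the ``otherwise zero'' claim, one observes that the three cases (a)--(c) exhaust $\Dl$ (short roots $\ep_i$, doubled roots $2\ep_i$, long roots $\ep_i + \ep_j$ with $i \neq j$, $j \neq \bar\imath$), and in each case the homogeneous component $\cF_\mu^\sg$ written above is automatically zero when $\sg$ lies outside the prescribed subset of $\G$ since then $\cA^\sg$, $\cA_-^\sg$, or $X_\an^\sg$ is zero.

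No serious obstacle is anticipated: the content is a bookkeeping refinement of Propositions \ref{prop:Qgrade} and \ref{prop:Qgradeexp} by the extra $\G$-grading, and the only mild subtlety is being careful that the choice $\deg_\G(x_i) = 0$ for $i \in \ttJ$ makes the $\G$-degree of a root vector $U(x_i . \al, x_j)$ coincide with $\deg_\G(\al)$ — exactly the content recorded in Proposition \ref{prop:gradingmatrix}.
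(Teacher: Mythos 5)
Your proposal is correct and follows essentially the same route as the paper, which deduces the statement from Proposition \ref{prop:Qgradeexp} together with the definition of the $\G$-grading on $\cF$ in \pref{pgraph:gradedunitary}; you have simply spelled out the bookkeeping (via \eqref{eq:FUgrading}, \eqref{eq:Uident1}, \eqref{eq:Uident2}, and the evaluation-at-$x_{\bar\jmath}$ uniqueness checks) that the paper leaves implicit.
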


\begin{proof} This follows from Proposition \ref{prop:Qgradeexp} and the definition
of the $\G$-grading on $\cF$ (see \pref{pgraph:gradedunitary}).
\end{proof}

\subsection{Statement of the structure theorem} \label{subsec:mainthm}  \qquad

We now state our first main theorem,  a structure theorem for centreless Lie tori
of type $\BCr$, $\rk \ge 3$.

\begin{theorem}[\textbf{Structure theorem}]
\label{thm:structure}  \ Let $\F$ be a field of characteristic 0.
\begin{itemize}
\item [{\rm (a)}]  Suppose that  $\rk\ge 1$, $\G$ is an abelian group  and $\Llat$ is a subgroup of $\G$.   Assume that
$ (\cA,-)$ is  an associative $\Llat$-torus with involution, that $\xi : X\times X \to \cA$   is a nondegenerate
$\G$-graded hermitian form of finite graded $\F$-dimension and Witt index $r$
over $(\cA,-)$, and that (H4) and (H5) in \pref{asspts:uLietorus}  hold.
Let $\cS = \fsu(X,\xi)$ be
the special unitary Lie algebra of $\xi$ with a $(Q\times\G)$-grading defined
using a compatible $\cA$-basis for $\xi$ as in
\pref{con:uLietorus}.
Then $\cS$
is a centreless Lie $\G$-torus of type $\Dl$, where
$\Dl$ is the root system of type $\BCr$ defined by~\eqref{eq:BCr2}.

\item[{\rm (b)}] Conversely, if $r\ge 3$, then
any centreless Lie $\G$-torus of type $\BCr$ is  bi-isomorphic
to a centreless Lie $\G$-torus $\cS$ constructed  as in {\rm (a)}.
\end{itemize}
\end{theorem}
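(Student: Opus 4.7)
My plan splits the two parts of the theorem.

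For part (a), I would verify centrelessness and the four axioms (LT1)--(LT4) directly, drawing on the propositions established in Section~\ref{subsec:propunitaryLietorus}. Centrelessness is Proposition \ref{prop:propSU}(c). Axiom (LT1) is built into the construction, since $\supp_Q(\cS) \subseteq \Dl \cup \{0\}$ follows from the weight decomposition of $X$ under $\hd$. For (LT2)(ii), Proposition \ref{prop:SUgradingexp} parametrizes each nonzero $\cS_\mu^\sg$ by a single element of $\cA^\sg$, $\cA_-^\sg$, or $X_\an^\sg$; these spaces are one-dimensional---the first two because $\cA$ is an $\Llat$-torus, and the third by hypothesis (H4)(c). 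The required $\speciallinear_2$-triple behaviour is a direct bracket computation with the $h_i$ using identity \eqref{eq:Uident3}. For (LT2)(i), nonvanishing of $\cS_\mu^0$ on $\Dlind$ follows from Proposition \ref{prop:SUgradingexp} at $\sg = 0$ together with (H4)(b) (in the short-root case $\mu = \ep_i$, taking $v = \vz$). Axiom (LT3) is Proposition \ref{prop:propSU}(b), and (LT4) follows from case (c) of Proposition \ref{prop:SUgradingexp} combined with Lemma \ref{lem:suppcompare}(a).

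For part (b), the plan has three stages. First, since $\cL$ is centreless, Proposition \ref{prop:LTbasic}(g) identifies $\cL$ with a $\BCr$-graded Lie algebra (with a specific grading pair coming from its Lie-torus structure); then Theorem \ref{thm:BCgraded}(b) produces an associative algebra with involution $(\cA,-)$, a hermitian $\cA$-module $(X, \xi)$ with a decomposition $X = X_\hyp \perp X_\an$ satisfying the hypotheses of Section \ref{subsec:BCunitary}, and a $Q$-graded Lie isomorphism $\psi\colon \cL \to \cB/Z(\cB)$, where $\cB = \fbu(X, \xi)$, matching the grading pairs (using Remark \ref{rem:rootmod} to identify $\hd^*$ with $\pi(\hd)^*$). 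Pushing forward through $\psi$, the $\G$-grading on $\cL$ endows $\cB/Z(\cB)$ with a compatible $(Q \times \G)$-grading.

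Second, I would pull this $\G$-grading back onto the coordinate data. For $i, \bar\imath, j$ distinct in $\ttJ$, Proposition \ref{prop:Qgradeexp}(a) provides a linear bijection $\al \mapsto U(x_i.\al, x_j) + Z(\cB)$ from $\cA$ onto $(\cB/Z(\cB))_{\ep_i+\ep_j}$. Transporting the $\G$-grading through this bijection yields a $\G$-grading on $\cA$; identity \eqref{eq:Uident4} shows the result is independent of the choice of $i, j$ and is multiplicative, while identity \eqref{eq:Uskew} shows the involution is graded. Analogously, $v \mapsto U(v, x_i) + Z(\cB)$ induces a $\G$-grading on $X_\an$ (independent of $i$ by \eqref{eq:Uident5}), and $X_\hyp = \bigoplus_{i\in\ttJ} x_i.\cA$ is graded by placing each $x_i$ in degree $0$. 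Identity \eqref{eq:Uident6} then shows $\xi$ is $\G$-graded.

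Third, I would verify that $(\cA,-)$ is an associative $\Llat$-torus with involution for $\Llat := \supp_\G(\cA)$, that hypotheses (H1)--(H5) of Assumptions \ref{asspts:uLietorus} are met, and that $\cB = \fsu(X, \xi)$. One-dimensionality of $\cA^\sg$ in each nonzero degree comes from the Lie torus axiom (LT2)(ii) for the long root space $\cL_{\ep_i+\ep_j}$. Invertibility of a nonzero homogeneous element of $\cA$ and closure of $\Llat$ under the group operation are extracted from the $\speciallinear_2$-triple structure of (LT2)(ii) combined with the multiplicative identity \eqref{eq:Uident4} and Lemma \ref{lem:sl2}. Finely-gradedness of $X_\an$---hypothesis (H4)(c)---is obtained from (LT2)(ii) on the short root spaces $\cL_{\ep_i}$, and (H4)(a), (H4)(b) follow by choosing the compatible basis so that $\vz \in X_\an^0$ has $\xi(\vz,\vz)=1$ (which is available by (LT2)(i)). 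For the identification $\cB = \fsu(X, \xi)$, the containment $\cB \subseteq \fsu(X, \xi)$ is \pref{pgraph:Qgrade}, while the reverse follows from Theorem \ref{thm:SU}(b): $\fsu(X, \xi)$ is generated by $U(X_\hyp, X_\an)$, whose elements are root vectors already lying in $\cB$. The main obstacle is this third stage, where the torus structure on $\cA$ and the finely-graded structure on $X_\an$ must be deduced purely from the abstract Lie torus axioms transported across the identifications, rather than being given at the outset.
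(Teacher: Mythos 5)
Your plan is correct and follows essentially the same route as the paper: part (a) by checking centrelessness and (LT1)--(LT4) from Propositions \ref{prop:propSU} and \ref{prop:SUgradingexp}, and part (b) by combining Theorem \ref{thm:BCgraded}(b) with a transfer of the $\G$-grading onto $\cA$, $X_\an$, $X_\hyp$ via the canonical forms and the identities of Proposition \ref{prop:identities}, then verifying Assumptions \ref{asspts:uLietorus} and using Proposition \ref{prop:propSU}(b),(c) to identify $\cB$ with $\fsu(X,\xi)$. The one step you leave tacit---anisotropy of $\xi|_{X_\an\times X_\an}$, needed so that $X=X_\hyp\perp X_\an$ is a Witt decomposition of Witt index $r$---is handled in the paper with exactly the tools you already cite (Lemma \ref{lem:sl2}, the support inclusion $2S\subseteq L$ from Lemma \ref{lem:suppfact}(c), and identity \eqref{eq:Uident6}).
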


\subsection{Proof of part~(a)  of the structure theorem}
\label{subsec:parta}  \

\begin{proof} Under the assumptions of (a) of Theorem \ref{thm:structure}, we
see that \ $\cS$ is centreless by Proposition \ref{prop:propSU}\,(c).
Thus, what is required to be shown is that $\cS$ satisfies (LT1)--(LT4)
of Definition \ref{def:Lietorus}.

For (LT1), we have seen in Section   \ref{subsec:BCunitary}
that $\cS_\mu = 0$ for $\mu\in Q\setminus(\Dl\cup\zero)$.     For (LT2)(i),
we know by Proposition \ref{prop:SUgradingexp}
that $\cS_\mu^0 \ne 0$ for $\mu\in \Dl$.   From Proposition \ref{prop:propSU}~(b) we see that (LT3) holds.
For (LT4), observe that $\supp_\G(X) = \G_\an \subseteq \supp_\G(\cS)$   by
Lemma \ref{lem:suppcompare}~(a) and Proposition \ref{prop:SUgradingexp}~(c).   So since
$X$ has full support in $\G$ by (H5),  we have (LT4).

It remains to confirm  that (LT2)(ii) holds.   For this,  recall that $\mu^\vee\in (\hd^*)^* = \hd$
denotes the coroot of $\mu$ for $\mu\in\Dl$.  Therefore,
\begin{equation}
\label{eq:coroot}
\begin{gathered}
\ep_i^\vee = 2 h_i,\quad (2\ep_i)^\vee =  h_i,\quad \text{for \ $1\le i \le \rk$},\\
(\ep_i-\ep_j)^\vee =  h_i-h_j,\quad (\ep_i+\ep_j)^\vee =  h_i+h_j\quad\text{for \ $1\le i < j \le \rk$},
\end{gathered}
\end{equation}
and $(-\mu)^\vee = -\mu^\vee$ for $\mu\in \Dl$.
Now $[\mu^\vee, x_\nu^\tau] = \langle\nu \mid \mu^\vee\rangle x_\nu^\tau$ for
$x_\nu^\tau\in \cS _\nu^\tau$, $\nu\in Q$, $\tau\in \G$.  Thus, since $Z(\cS) = 0$,
(LT2)(ii)  is equivalent to the following statement:
If $\mu\in \Dl$, $\sg\in \G$,  and $\cS_\mu^\sg \ne 0$, then
$\cS_\mu^\sg = \F e_\mu^\sg$ and $\cS_{-\mu}^{-\sg} = \F f_\mu^\sg$, where
\[[e_\mu^\sg,f_\mu^\sg] = \mu^\vee.\]
To verify that this is true, suppose that
$\mu\in \Dl$, $\sg\in \G$ and $\cS_\mu^\sg \ne 0$.
Since $X_{\an}$ is finely graded by (H4)(c),
it follows from Proposition \ref{prop:SUgradingexp}
that $\cS_\mu^\sg$ and $\cS_{-\mu}^{-\sg}$ are one-dimensional.  Hence, it suffices
to prove   that
\begin{equation}
\label{eq:LT2check}
\mu^\vee\in[\cS_\mu^\sg,\cS_{-\mu}^{-\sg}].
\end{equation}
For  this, we may assume that
$\mu$ is one of the following: $\ep_i$, $2\ep_i$,  where $1\le i \le \rk$,
or $\ep_i-\ep_j$, $\ep_i+\ep_j$, where $1\le i< j \le \rk$.
In each of these cases, one can check
\eqref{eq:LT2check} directly using  Proposition \ref{prop:SUgradingexp}
and \eqref{eq:coroot}.  We present only one argument, the others being similar.
Suppose that $\mu = \ep_i$, where $1\le i \le r$.
Then, by Proposition \ref{prop:SUgradingexp}\,(c),  $\sg\in \G_\an$.
So we may choose $0\ne v \in X_{\an}^\sg$
and take $0\ne \al = \xi(v,v)\in \cA_+^{2\sg}$.  Then
$U(v,x_i)\in \cS_\mu^\sg$, $U(v.\al^{-1},x_\prmi)\in \cS_{-\mu}^{-\sg}$,
and  since $\overline \al = \al$  we have
\begin{align*}
[U(v,x_i),&U(v.\al^{-1},x_\prmi)]
= U(U(v,x_i)(v.\al^{-1}),x_\prmi) +U(v.\al^{-1},U(v,x_i)x_\prmi)\\
&= U(-x_i,x_\prmi) +U(v.\al^{-1},v)
= -U(x_i,x_\prmi) + 0 = -h_i = -\frac 12 \ep_i^\vee.\qedhere
\end{align*}
\end{proof}

\subsection{Proof of part (b) of the structure theorem} \
\label{subsec:partb}

\begin{proof}

In order to prove part (b) of Theorem \ref{thm:structure}, we assume in this section  that
\[\cL = \textstyle\bigoplus_{\mu\in Q_\cL,\, \sg\in\G} \cL_\mu^\sg\]
is a
centreless Lie $\G$-torus of type $\Dl_\cL$, where $\Dl_\cL$ is a root system of type
$\BCr$, $r\ge 3$, and $Q_\cL$ is the root lattice of $\Dl_\cL$.  For convenience  we set
$\ttJ = \set{1,\dots,2\rk}$ and write $\prmi = 2\rk + 1 - i$ for $i\in \ttJ$.

\begin{pgraph} \ptitle{Preparation}
By Proposition \ref{prop:LTbasic}\,(f) and (g),
$\cL$ is a $\Dl_\cL$-graded Lie algebra with grading pair
$(\gd_{\cL}, \hd_{\cL})$,   where
\begin{equation}
\label{eq:gL}
\gd_\cL = \cL^0 \andd \hd_\cL = \cL_0^0.
\end{equation}
As in Section  \ref{subsec:basics}, we identify $\Dl_\cL$ with a root system in $\hd_\cL^*$.

By Theorem \ref{thm:BCgraded}\, (b), there exists an associative algebra with involution   $(\cA,-)$;  a
hermitian form $\xi : X\times X \to \cA$ over $(\cA,-)$ with
$X = X_\hyp \perp X_\an$;
an
$\cA$-basis $\{x_i\}_{i=1}^{\twor}$  for $X_\hyp$
with   $\xi(x_i,x_j) =
\delta_{i \prmj}$  for all $i,j \in \ttJ$;
an element $\vz\in X_\an$ with
$\xi(\vz,\vz)=1$;
and an isomorphism
\[\psi : \cL \to \cB/Z(\cB),\]
where $\cB = \fbu(X,\xi)$.    Moreover, $\psi$ may be chosen  so that
\[\psi(\gd_\cL) = \pi(\gd) \andd \psi(\hd_\cL) = \pi(\hd),\]
where $\pi : \cB \to \cB/Z(\cB)$ is the canonical projection,
 \begin{equation}
\label{eq:gh}
\gd = \textstyle
\Big(\bigoplus_{i,j \in \ttJ, \ i < j} \F\, U(x_i,x_j)\Big)
\bigoplus
\Big(\bigoplus_{i\in \ttJ } \F\, U(x_i,\vz)\Big),
\qquad
\hd =
\bigoplus_{i=1}^\rk  \F\, h_i,
\end{equation}
and $h_i =  U(x_i,x_\prmi)$, $1\le i \le r$, in $\cB$.
\end{pgraph}

\begin{pgraph} \ptitle{The root gradings}
\label{pgraph:Qgr}
Recall that $\cL$, as a Lie torus, comes equipped with
a grading by the root lattice $Q_\cL$ of the root system $\Dl_\cL$ in
$\hd_\cL^*$.     On the other hand, in Section  \ref{subsec:BCunitary}
we gave $\cB$ a grading by the root lattice $Q = Q(\Dl) = \bbZ\ep_1\oplus \dots  \oplus \bbZ\ep_r$ of $\Dl$,
where
\[\Dl = \set{\ep_i\mid i\in\ttJ} \cup\{\ep_i+\ep_j\mid i,j\in\ttJ,\ j\ne \prmi\},\]
$\ep_1,\dots,\ep_r$ is the dual basis in $\hd^*$
of $h_1,\dots,h_r$, and $\ep_\prmi = - \ep_i$ for $1\le i \le r$.  So as noted in (\pref{pgraph:modproperties}),
$\pi(\cB)$ is also graded by $Q$.
We now check that $\psi$
is an isograded-isomorphism of the $Q_\cL$-graded Lie algebra $\cL$ onto the $Q$-graded
Lie algebra $\pi(\cB)$.

We identify $\pi(\hd)^*$ with $\hd^*$ as in
Remark \ref{rem:rootmod}, and so $\Dl$ and $Q$ lie in
$\pi(\hd)^*$.
Next let $\invd{\psi}:\hd_\cL^* \to \pi(\hd)^* $ denote the
inverse dual of $\psi|_{\hd_\cL} : \hd_\cL \to
\pi(\hd)$. Now, the $Q_\cL$-grading of $\cL$ is the root space
decomposition relative to $\hd_\cL$  by \eqref{eq:Lroot}, and the $Q$-grading of $\pi(\cB)$ is the
root space decomposition relative to $\pi(\hd)$  by Remark
\ref{rem:rootmod}.
So it follows from the definition of $\invd{\psi}$
that
\[\psi(\cL_\mu) = \pi(\cB)_{\invd{\psi}(\mu)}\]
for $\mu\in \hd_\cL^*$.  Hence, we have
\begin{equation}
\label{eq:psisupp}
\invd{\psi} (\supp_{Q_\cL}(\cL)) = \supp_{Q}(\pi(\cB)).
\end{equation}
But $\supp_{Q_\cL}(\cL)$ generates $Q_\cL$ as a group
by (LT1) and (LT2)(i),  and $\supp_{Q} (\pi(\cB))$ generates $Q$ as a group by
Proposition \ref{prop:Qgradeexp}. So
\[\invd{\psi}(Q_\cL) = Q.\]
Thus,  $\psi$ is an isograded-isomorphism of the $Q_\cL$-graded Lie algebra $\cL$ onto the $Q$-graded
Lie algebra $\pi(\cB)$, as desired.

In addition, $\supp_{Q_\cL}(\cL)$ is either $\Dl_\cL\cup\zero$ or
$(\Dl_\cL)_\text{ind}\cup\zero$ by Remark \ref{rem:LTsupport}~(b);
whereas $\supp_Q(\pi(\cB))$ is either $\Dl\cup\zero$ or
$\Dlind\cup\zero$  by Proposition \ref{prop:Qgradeexp}. Thus, since
$\Dl_\cL$ and $\Dl$ are each root systems of type $\BCr$, it follows from
\eqref{eq:psisupp} that
\[\invd{\psi}(\Dl_\cL) = \Dl.\]
\end{pgraph}

\begin{pgraph} \ptitle{Transfer of the $\G$-grading}
\label{pgraph:transfer}
Now $\cL$, as a Lie $\G$-torus, has a given $\G$-grading which is
compatible with the $Q_\cL$-grading.  We use $\psi$
to transfer the $\G$-grading from $\cL$ to a $\G$-grading on $\pi(\cB)$ which
is compatible with the $Q$-grading on $\pi(\cB)$.

Henceforth, for convenience, we use $\psi$ and $\invd{\psi}$  to make the following identifications:
\[\cL = \pi(\cB),\quad \gd_\cL = \pi(\gd), \quad  \hd_\cL  = \pi(\hd),\quad Q_\cL = Q,  \andd \Dl_\cL = \Dl. \]
(These identifications are allowed since we are working up to  bi-isomorphism.)
Note in particular that $\cL$ and $\pi(\cB)$ are identified as $(Q\times \G)$-graded Lie algebras.   Also,
since  $\cL^0 = \gd_\cL = \pi(\gd)$, it follows from \eqref{eq:gL} and \eqref{eq:gh} that
\begin{equation}
\label{eq:deg0}
\pi(U(x_i,x_j))\in \cL^0 \andd \pi(U(x_k,\vz))\in \cL^0
\end{equation}
for $i,j\in \ttJ$, $i\ne j$, $k\in \ttJ$.
\end{pgraph}

\begin{pgraph} \ptitle{Support sets}
\label{pgraph:suppfact}
Recall that  in Section \ref{subsec:basics} we defined
$\G_\mu = \supp_\G(\cL_\mu)$
for $\mu\in \Dl$.  We set
\[S = \G_{\ep_k} \text{ for $k\in \ttJ$} \andd L = \G_{\ep_i+\ep_j} \text{ for $i,\prmi,j$  distinct in $\ttJ$.}  \]
By Proposition \ref{lem:suppfact}, these sets are well defined,  and we have
\begin{equation}
\label{eq:suppfact1}
0\in S, \quad -S = S, \quad 0\in L,\quad -L = L
\end{equation}
and
\begin{equation*}
\label{eq:suppfact2}
\langle S \rangle= \G.
\end{equation*}
Furthermore, taking $\nu = \ep_1+\ep_2$ and $\mu =  \ep_1$ in
Lemma \ref{lem:suppfact}~(c), we have $\Llat - 2\Slat \subseteq \Llat$,
and so
\begin{equation}
\label{eq:suppfact3}
2S \subseteq L.
\end{equation}
\end{pgraph}

\begin{pgraph} \ptitle{Canonical forms for elements of $\cL_\mu$}
\label{pgraph:canform}
Now if $i,j \in \ttJ$ with $j \ne i, \prmi$, then,
by Proposition \ref{prop:Qgradeexp},
elements of  $\cB_{\ep_i+\ep_j}$ can be uniquely expressed in the form
$U(x_i.\al,x_j)$ where $\al\in\cA$. So
we define
\[\U_{ij}(\al) = \pi(U(x_i.\al,x_j)) \in \cL_{\ep_i+\ep_j}\]
for $\al\in\cA$ and $i,j\in \ttJ$ with $j \ne i, \prmi$
(but we  do not define $\U_{i \prmi}(\al)$ and $\U_{ii}(\al)$).
Since $\ker(\pi) = Z(\cB) \subseteq \cB_0$, Proposition \ref{prop:Qgradeexp} tells us that
a general element of $\cL_{\ep_i+\ep_j}$
has a unique expression
\[\U_{ij}(\al), \quad \al\in\cA.\]
Similarly if we define
\[\U_{i}(v) = \pi(U(v,x_i)) \in \cL_{\ep_i}\]
for $i\in \ttJ$ and $v\in X_\an$, then  a general element
of
$\cL_{\ep_i}$
has a unique expression of the form
\[\U_i(v), \quad v \in X_\an.\]
(We could also introduce unique expressions for elements of $\cL_{2\ep_i}$, but that is not
 needed here.)

If $\al,\beta\in\cA$, $v,w\in X$ and $i,\prmi,j,\prmj,k,\prmk$ are distinct in $\ttJ$,
we obtain  the following identities by applying $\pi$ to the
identities in Proposition \ref{prop:identities}:
\begin{align}
\label{eq:Uskewmain}
\U_{ij}(\al) &= - \U_{ji}(\overline \al)\\
\label{eq:Uidentmain4}
[\U_{ij}(\alpha), \U_{\bar \jmath k}(\beta)]  &=
\U_{i k}(\alpha \beta)\\
\label{eq:Uidentmain5}
[\U_{ij}(\alpha), \U_{\bar \imath}(v)] &=- \U_j(v. \alpha),\\
\label{eq:Uidentmain6}
[\U_i(v), \U_j(w)] &=  -\U_{ij}(\xi(v,w)).
\end{align}
\end{pgraph}

\begin{pgraph} \ptitle{The $\G$-grading on $\cA$}
\label{pgraph:Agr}
Now $\cL_{\ep_1+\ep_2}$ is a finely $\G$-graded
vector space with support $\Llat$, and, by \pref{pgraph:canform}, the map $\al \mapsto \U_{ij}(\al)$ is a linear bijection
of $\cA$ onto $\cL_{\ep_1+\ep_2}$.   Hence, there is a unique $\G$-grading
on the vector space $\cA$  such that
\[\U_{12}(\cA^\sg) =  \cL_{\ep_1+\ep_2}^\sg\]
for $\sg\in \G$; and with respect to this grading, $\cA$ is finely graded with support $\Llat$.

Next we argue that
\begin{equation}
\label{eq:Ugr1}
\U_{ij}(\cA^{\sg}) =
\cL_{\ep_i + \ep_j}^{\sg}
\end{equation}
for $i, \prmi, j, \prmj$ distinct and $\sg\in \G$.
Indeed, since $\cL_{\ep_i+\ep_j} = \bigoplus_{\sg\in \G} \U_{ij}(\cA^\sg)$ and
$\cL_{\ep_i+\ep_j} = \bigoplus_{\sg\in \G} \cL_{\ep_i+\ep_j}^\sg$, it is sufficient to show that
\begin{equation}
\label{eq:Ugr2}
\U_{ij}(\cA^{\sg}) \subseteq
\cL_{\ep_i + \ep_j}^{\sg}
\end{equation}
for $i, \prmi, j, \prmj$ distinct and $\sg\in \G$.
But by  \eqref{eq:deg0}, we know that
$\U_{\prmj \ell}(1) \subseteq \cL_{\ep_\prmj + \ep_\ell}^0$ for $j\ne \ell, \bar \ell$.
Thus, using \eqref{eq:Uidentmain4}, we have
\[\U_{ik}(\cA^\sg) = [\U_{ij}(\cA^\sg), \U_{\prmj k}(1)]
\subseteq [\U_{ij}(\cA^\sg), \cL_{\ep_\prmj + \ep_k}^0]\]
for $i,\prmi, j, \prmj, k, \prmk$ distinct.
Hence, if \eqref{eq:Ugr2} holds for the pair $(i,j)$,  it also holds for $i,k$,
provided that $i,\prmi, j, \prmj, k, \prmk$ are distinct.
Similarly from \eqref{eq:Uidentmain4} we see that if
\eqref{eq:Ugr2} holds for the pair $(i,j)$, it also holds for the pair $(k,j)$,
provided that $i,\prmi, j, \prmj, k, \bar k$ are distinct.
Since \eqref{eq:Ugr2} holds for the pair $(1,2)$, it now follows easily that it
holds for all pairs $(i,j)$ with $i,\bar \imath, j, \bar \jmath$ distinct.

Next, if $\sg,\tau\in\G$, we have using
\eqref{eq:Uidentmain4} and \eqref{eq:Ugr1} that
\[\U_{12}(\cA^\sg\cA^\tau) = [\U_{13}(\cA^\sg),\U_{\bar 3 2}(\cA^\tau)] =
[\cL_{\ep_1+\ep_3}^\sg, \cL_{\ep_{\bar 3}+\ep_2}^\tau] \subseteq \cL_{\ep_1+\ep_2}^{\sg+\tau}
= \U_{12}(\cA^{\sg+\tau}),\]
and hence
$\cA^\sg \cA^\tau \subseteq \cA^{\sg+\tau}$.
Thus, $\cA$ is a finely $\G$-graded associative algebra.

Using \eqref{eq:Uskewmain} and \eqref{eq:Ugr1}, we conclude   that  $\U_{12}(\overline{\cA^\sg}) =
\U_{21}(\cA^\sg) = \cL_{\ep_2+\ep_1}^\sg = \U_{12}(\cA^\sg)$ for $\sg\in \G$.
Thus, the involution ``$-$'' on $\cA$ is graded.
\end{pgraph}

\begin{pgraph} \ptitle{$(\cA,-)$ is an  associative $\Llat$-torus with involution}
We have already established that the algebra $(\cA,-)$ is a finely $\G$-graded associative
algebra with involution and that $\supp_\G(\cA) = \Llat$.  To show that
$(\cA,-)$ is an associative $\Llat$-torus with involution,  it remains to show that
$\cA^\sg$ is spanned by an invertible element for $\sg\in \Llat$, since
if  that is true,  it follows that $\Llat$ closed under addition and also
that $\Llat$ is a subgroup of $\G$ using \eqref{eq:suppfact1}.
Let $\sg\in \Llat$.  Then, $-\sg\in \Llat$
and $0\in \Llat$ by \eqref{eq:suppfact1}, and thus by  Lemma \ref{lem:sl2}, we have
$[\cL_{\ep_1+\ep_2}^\sg, \cL_{\ep_{\bar 2}+\ep_3}^{-\sg}] = \cL_{\ep_1+\ep_3}^0$
Therefore,  from  \eqref{eq:Ugr1}, we see $[\U_{12}(\cA^\sg), \U_{\bar 2 3}(\cA^{-\sg})]
= \U_{13}(\cA^0)$.  So, by \eqref{eq:Uidentmain4},  $\cA^\sg \cA^{-\sg} = \cA^0 = \F 1$.
By symmetry we also have  $\cA^{-\sg} \cA^{\sg} = \F 1$ so that $\cA^\sg$ is spanned by an invertible element.
\end{pgraph}

\begin{pgraph} \ptitle{The $\G$-grading on $X$}
\label{pgraph:Xgr}
Now
$\cL_{\ep_1}$ is a finely $\G$-graded
vector space with support $S$, and, by \pref{pgraph:canform}, the map $v \mapsto \U_{1}(v)$ is a linear bijection
of $X_\an$ onto $\cL_{\ep_1}$.   Hence, there is a unique $\G$-grading
on the vector space $X_\an$ such that
\[\U_1(X_\an^\sg) = \cL_{\ep_1}^\sg\]
for $\sg\in \G$; and with respect to this grading,  $X_\an$ is finely graded with support $S$.
Also, arguing as in
\pref{pgraph:Agr}, we see using \eqref{eq:Uidentmain5} in place of \eqref{eq:Uidentmain4} that
\begin{equation}
\label{eq:Ugr3}
\U_i(X_\an^\sg) =  \cL_{\varepsilon_i}^\sg
\end{equation}
for all $i\in \ttJ$, $\sg\in\G$, and also that
$X_\an^\sg.\cA^\tau \subseteq X_\an^{\sg+\tau}$
for $\sg,\tau\in \G$.
Hence,  $X_\an$ is a finely $\G$-graded $\cA$-module.  Note also that
$\U_1(\vz)\in \cL^0_{\ep_1}$ by \eqref{eq:deg0},
so $\vz\in X_\an^0$.

To obtain a $\G$-grading
on the $\cA$-module $X_\hyp$, let $\deg_\G(x_i.\alpha) = \deg_\G(\alpha) = \sg$ for
$i \in \ttJ$, $\alpha \in \cA^\sg$ and $\sg\in \Llat$.

Finally, we give the $\cA$-module $X$
the direct sum grading with $X^\sg = X_\hyp^\sg \oplus X_\an^\sg$ for $\sg\in \G$. It is clear that  $X$ has finite
graded $\F$-dimension since this is true for both $X_\hyp$ and $X_\an$.
\end{pgraph}

\begin{pgraph} \ptitle{$\xi$ is $\G$-graded}
\label{pgraph:Vform}
Equation  \eqref{eq:Uidentmain6} tells  us that $\xi(X_\an^\sg,X_\an^\tau) \subseteq \cA^{\sg+\tau}$ for $\sg,\tau\in \G$,
So the hermitian form $\xi$ on $X_\an$ is $\G$-graded.  Also, if  $\alpha \in \cA^\sg$
and $\beta \in \cA^\tau$, then
$\xi(x_i.\alpha,x_j.\beta) = \delta_{i \prmj}\, \overline \alpha \beta \in \cA^{\sg + \tau}$,
which shows that the form $\xi$ is $\G$-graded on $X_\hyp$.  Thus, $\xi$ is $\G$-graded on $X$.
\end{pgraph}

\begin{pgraph} \ptitle{The Witt decomposition of $\xi$}
\label{pgraph:Witt}
First we argue  that $\xi$ is anisotropic on $X_\an$.
Let $v \in X_\an^\sg$ be nonzero for some $\sg \in \G$.
Then, $\sg\in S$ and $2\sg\in L$  by \eqref{eq:suppfact3}, so
by Lemma \ref{lem:sl2} we know that
$[\cL_{\varepsilon_1}^\sg, \cL_{\varepsilon_2}^\sg] = \cL_{\varepsilon_1+\varepsilon_2}^{2\sg}$.
But by \eqref{eq:Uidentmain6} we have
$$[\U_1(v), \U_2(v)] = \U_{12}(\xi(v,v)).$$
Thus, $\xi(v,v) \neq 0$, and the form on $X_\an$ is anisotropic.

It follows that $\xi$ is nondegenerate. But $X_\hyp$ is certainly  a hyperbolic space,
so  $X = X_\hyp \perp X_\an$ is a Witt
decomposition of $\xi$.  Therefore,
the Witt index of $\xi$ is $\rk$.
\end{pgraph}

\begin{pgraph} \ptitle{Assumptions \ref{asspts:uLietorus} hold}
We have argued that $(\cA,-)$ is an associative $\Llat$-torus with involution;
$\xi$ is a nondegenerate $\G$-graded hermitian form of finite graded $\F$-dimension
with Witt decomposition $X = X_\hyp \perp X_\an$ and Witt index $r$; and that
there exists a nonzero element  $\vz\in X_\an^0 $ such that
$\xi(\vz,\vz) = 1$.
In order to see that Assumptions \ref{asspts:uLietorus} hold,
all that remains is to show that (H4)(a), (H4)(c) and (H5) hold.
But  the first two of these are clear.  Also   since $\langle \supp_\G(X_\an) \rangle = \langle S \rangle  = \G$,
it follows that  $X_\an$ has full support in $\G$, and thus (H5)  holds.
\end{pgraph}

\begin{pgraph} \ptitle{Conclusion of the proof of Theorem \ref{thm:structure}\,(b)}
In \pref{pgraph:Qgr}, we  identified $\cL$  and  $\pi(\cB)$ as $(Q\times \G)$-graded Lie algebras.
We have just demonstrated that the assumptions in  \textbf {\ref{asspts:uLietorus}} hold.   Hence,
by Proposition \ref{prop:propSU}\,(b) and (c), we have $\cB = \cS$ and $Z(\cB) = 0$.
So with the obvious identification $\pi(\cB) = \cB$, we have
\[\cL = \pi(\cB) = \cB = \cS,\]
as $(Q\times \G)$-graded Lie algebras.

Finally, we know that the $Q$-grading on $\cS$ is the root space decomposition of $\cS$
relative to the adjoint action of $\hd$ (see Section \ref{subsec:BCconstruct}).
So all that remains to be  shown is that the $\G$-grading on $\cS$ (which
was obtained by transferring the given grading on $\cL$ to $\cS$ via the identification  in \pref{pgraph:transfer})
coincides with the $\G$-grading on $\cS$ induced
by the $\G$-grading on $X$.
But both of these $\G$-gradings are compatible with the $Q$-grading; and by Proposition \ref{prop:propSU}\,(b),
the Lie algebra $\cS$ is generated by $\sum_{i=1}^{2\rk} \cS_{\ep_i}$.  Hence, it suffices
to verify that the two $\G$-gradings agree on $\cS_{\ep_i}$ for $i\in \ttJ$.  But this is a consequence
of \eqref{eq:Ugr3} and Proposition \ref{prop:SUgradingexp}\,(c).
\qedhere
\end{pgraph}  \end{proof}

\subsection{The bi-isomorphism theorem}\
\label{subsec:bi}

Our second  main theorem gives necessary and sufficient conditions for two unitary Lie tori
of type $\BCr$, $\rk \ge 3$, to be bi-isomorphic.  Since $\rk$ is a bi-isomorphism invariant,
we can for convenience fix $\rk$ in the discussion.

\begin{theorem}[\textbf{Bi-isomorphism theorem}] \
\label{thm:biisomorphism}
Assume  $\F$ has characteristic 0.  Suppose that
$r$, $\G$, $\Llat$, $(\cA,-)$
and $\xi : X\times X \to \cA$
(resp.~$\rk' = \rk$, $\G'$,  $\Llat'$, $(\cA',-)$ and $\xi' : X'\times X' \to \cA'$) satisfy (H1)--(H5)
in \pref{asspts:uLietorus} and that
$\cF$ and $\cS$ (resp.~$\cF'$ and $\cS'$) are the graded Lie algebras constructed from $\xi$
(resp.~$\xi'$) as in \pref{con:uLietorus} using a compatible $\cA$-basis for $\xi$ (respectively
$\xi'$).
\begin{itemize}
\item[{\rm (a)}]   If there exists
a triple $(\theta,\theta_1,\theta_2)$ of maps, where
$\theta: X \to X'$  is an  $\F$-linear isomorphism,
$\theta_1 : (\cA,-) \to (\cA',-)$ is an isomorphism of algebras with involution,
and $\theta_2 : \G \to \G'$ is a group isomorphism,  which satisfy  \begin{gather}
\label{eq:bi1}
\theta(x.\al) = \theta(x).\theta_1(\al)
\\
\label{eq:bi2}
\theta_1(\xi(x,y)) = \xi'(\theta(x),\theta(y))\\
\label{eq:bi3}
\theta_1(\cA^\sg) = {\cA'}^{\theta_2(\sg)} \andd \theta(X^\sg) = {X'}^{\theta_2(\sg)}
\end{gather}
for $x,y\in X$, $\al\in \cA$ and $\sg\in \G$,
then  $\theta_2(\Llat) = \Llat'$, $\cF$ is bi-isomorphic to $\cF'$,
and $\cS$ is bi-isomorphic to $\cS'$.

\item[{\rm (b)}]  Conversely,
if $\rk \ge 3$ and $\cS$ and $\cS'$ are bi-isomorphic, then
there exists a triple $(\theta,\theta_1,\theta_2)$ of maps   with the indicated properties.
\end{itemize}
\end{theorem}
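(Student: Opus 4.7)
The plan for part~(a) is a direct transport-of-structure argument. Since \eqref{eq:bi1} makes $\theta$ semilinear over $\theta_1$, conjugation $T\mapsto\theta\,T\,\theta^{-1}$ is an $\F$-linear isomorphism $\End_\cA(X)\to\End_{\cA'}(X')$, and a short calculation using \eqref{eq:bi2} shows it sends each $E(x,y)$ to $E'(\theta(x),\theta(y))$ and hence each $U(x,y)$ to $U'(\theta(x),\theta(y))$. It therefore restricts to Lie algebra isomorphisms $\cF\to\cF'$ and $\cS\to\cS'$ preserving the $\G$-grading via $\theta_2$, by \eqref{eq:bi3}; in particular $\theta_2(\Llat)=\Llat'$ since $\Llat=\supp_\G(\cA)$. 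The image $\set{\theta(x_i)}_{i\in\ttI}$ is itself a compatible $\cA'$-basis for $\xi'$, and its associated Cartan subalgebra of $\cS'$ is conjugate to $\hd'$ via an inner automorphism of $\gd'\subseteq(\cS')^0$; composing with this inner automorphism (which is $\G'$-grading preserving) yields the desired bi-isomorphisms.

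For part~(b), let $\psi:\cS\to\cS'$ be a bi-isomorphism with induced group maps $\psi_\rgr:Q\to Q'$ and $\psi_\egr:\G\to\G'$. My first step is to normalise $\psi_\rgr$. By Remark~\ref{rem:rootiso}, $\psi_\rgr$ is an isomorphism of root systems, and since the automorphism group of $\BCr$ equals its Weyl group, I can compose $\psi$ with the isograded-automorphism of $\cS'$ supplied by Remark~\ref{rem:Weyl} to arrange that $\psi_\rgr(\ep_i)=\ep'_i$ for $1\le i\le r$. I then set $\theta_2:=\psi_\egr$; the equality $\theta_2(\Llat)=\Llat'$ follows from the characterisation $\Llat=\supp_\G(\cS_{\ep_1+\ep_2})$ of Proposition~\ref{prop:SUgradingexp}\,(a).

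The algebra map and the module map are then reconstructed exactly as in the proof of the structure theorem. By Proposition~\ref{prop:SUgradingexp}\,(a) applied on both sides, every element of $\cS_{\ep_1+\ep_2}^\sg$ has a unique expression $U(x_1.\al,x_2)$ with $\al\in\cA^\sg$, and the analogous statement holds on the primed side. Define $\theta_1:\cA\to\cA'$ by $\psi(U(x_1.\al,x_2))=U'(x'_1.\theta_1(\al),x'_2)$. Linearity and the grading condition $\theta_1(\cA^\sg)={\cA'}^{\theta_2(\sg)}$ are immediate; multiplicativity and involution-compatibility follow by applying $\psi$ to \eqref{eq:Uidentmain4} (which uses three distinct indices in $\ttJ$, available since $r\ge 3$) and to \eqref{eq:Uskewmain}, after the inductive argument of \pref{pgraph:Agr} shows that $\psi(\U_{ij}(\al))=\U'_{ij}(\theta_1(\al))$ for all admissible pairs $(i,j)$. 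An entirely parallel argument using Proposition~\ref{prop:SUgradingexp}\,(c) defines $\theta_\an:X_\an\to X'_\an$ by $\psi(U(v,x_1))=U'(\theta_\an(v),x'_1)$; applying $\psi$ to \eqref{eq:Uidentmain5} and \eqref{eq:Uidentmain6} then forces $\theta_\an(v.\al)=\theta_\an(v).\theta_1(\al)$ and $\theta_1(\xi(v,w))=\xi'(\theta_\an(v),\theta_\an(w))$. Finally, I extend by $\theta(x_i.\al):=x'_i.\theta_1(\al)$ for $i\in\ttJ$ and set $\theta=\theta_\hyp\oplus\theta_\an$; the required identities \eqref{eq:bi1}--\eqref{eq:bi3} are then immediate.

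The main obstacle is the normalisation of $\psi_\rgr$: one must know that every root-system automorphism of $\BCr$ can be realised by a $\G'$-grading-preserving Lie algebra automorphism of $\cS'$, so that composing with $\psi$ leaves the bi-graded structure intact. Remark~\ref{rem:Weyl} supplies exactly this because the relevant inner automorphisms of $\gd'$ lie in $(\cS')^0$. A secondary technical point is ensuring that the inductive propagation of $\theta_1$ from the pair $(1,2)$ to all admissible $(i,j)$ is consistent with the unique expressions fixed on the target side, as well as checking that the analogous propagation for $\theta_\an$ from the index~$1$ to all indices of $\ttJ$ goes through; both are handled automatically once $\psi_\rgr$ has been normalised to send $\ep_i$ to $\ep'_i$, along the lines of \pref{pgraph:Agr} and \pref{pgraph:Xgr}.
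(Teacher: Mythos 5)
Your part~(a) is correct but takes a slightly different route from the paper. The paper first composes $\theta$ with a $\G'$-graded $\cA'$-linear isometry (available by Theorem \ref{thm:herm}\,(c)) so that $\theta(X_\hyp)=X'_\hyp$, $\theta(X_\an)=X'_\an$ and $\theta(x_i)=x'_i$ for $i\in\ttJ$; after that, conjugation $T\mapsto\theta T\theta^{-1}$ carries $\hd$ exactly onto $\hd'$ and the bi-isomorphism is immediate. You instead conjugate afterwards, using that the Cartan subalgebra attached to the image basis is conjugate to $\hd'$ by an inner automorphism of $\gd'\subseteq(\cS')^0$. That works and the grading-preservation argument is the same as in Remark \ref{rem:Weyl}, but it relies on the conjugacy theorem for splitting Cartan subalgebras of a split simple Lie algebra over a non-algebraically-closed field of characteristic $0$, an external input the paper's normalization avoids.

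In part~(b) there is a genuine gap at exactly the point you declare ``handled automatically.'' Defining $\theta_1$ by $\psi(U(x_1.\al,x_2))=U(x'_1.\theta_1(\al),x'_2)$, your multiplicativity and involution arguments need $\psi(U(x_i.\al,x_j))=U(x'_i.\theta_1(\al),x'_j)$ for \emph{all} admissible $(i,j)$ with the \emph{same} map $\theta_1$, and the propagation argument of \pref{pgraph:Agr} does not give this: that argument only tracks graded subspaces, whereas here the element $\psi(U(x_i.1,x_j))$ is merely constrained to lie in $\cS'^{\,0}_{\ep'_i+\ep'_j}$, i.e.\ it equals $U(x'_i.b_{ij},x'_j)$ for some scalar $b_{ij}\in\F^\times$ which may vary with $(i,j)$. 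Concretely, compose any bi-isomorphism with the bi-automorphism of $\cS'$ induced (via part~(a)) by the graded isometry $x'_1\mapsto\lambda x'_1$, $x'_{\bar 1}\mapsto\lambda^{-1}x'_{\bar 1}$ (other basis vectors fixed), $\lambda\ne\pm1$: then your $\theta_1$ sends $1$ to $\lambda\ne1$, so it is not an algebra homomorphism, the maps attached to different pairs $(i,j)$ genuinely differ, and likewise $\psi(U(\vz,x_i))$ determines $\vz'$ only up to scalars $c_i$. This is precisely what the paper's proof handles: it introduces all the maps $\eta_{ij}$ and $\eta_i$ via the unique expressions of Proposition \ref{prop:SUgradingexp}, records $\eta_{ij}(1)=b_{ij}1$ and $\eta_i(\vz)=c_i\vz'$, sets $\theta_1=b_{ij}^{-1}\eta_{ij}$ and $\theta=c_i^{-1}\eta_i$, and checks (using the transported identities \eqref{eq:Uidentmain4}--\eqref{eq:Uidentmain6}, in the style of the centroid computation in Proposition \ref{prop:centroid}) that these normalized maps are independent of the indices and that the scalar relations such as $b_{ik}=b_{ij}b_{\prmj k}$ and $c_ic_j=b_{ij}$ hold; it is these relations that make \eqref{eq:bi1} and \eqref{eq:bi2} come out. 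Your outline is salvageable, but this scalar normalization must be added -- it is not a consequence of normalizing $\psi_\rgr$.
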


\begin{proof}  We assume that $\cS$ has been constructed
using a compatible $\cA$-basis $\set{x_i}_{i\in \ttI}$ for $\xi$ and we adopt
the notation of \pref{con:uLietorus}.  We do the same for $\xi'$ with primed notation.

(a) Suppose we have a triple of maps
$(\theta,\theta_1,\theta_2)$  as described in (a).
By \eqref{eq:bi3}, we have $\theta_2(\supp_\G(\cA)) = \supp_{\G'}(\cA')$ so $\theta_2(\Llat) = \Llat'$.  Next
by \eqref{eq:bi1}--\eqref{eq:bi3},
\[X' = \theta(X_\hyp) \oplus \theta(X_\an)\]
is a $\G'$-graded orthogonal
decomposition of $\cA'$-modules, $\theta(X_\an)$ is anisotropic, and $\set{\theta(x_i)}_{i\in \ttJ}$
is an $\cA'$-basis for $\theta(X_\hyp)$ relative to which the matrix of $\xi'|_{X_\hyp}$ is the $(\twor\times\twor)$-matrix
$(\delta_{i\prmj})$.  Then, by Theorem \ref{thm:herm}\,(c),
we can compose $\theta$ with a $\G'$-graded $\cA'$-linear isometry and assume that
\[\theta(X_\hyp) = X'_\hyp, \quad \theta(X_\an) = X'_\an \andd \theta(x_i) = x_i' \text{ for } i \in \ttJ.\]

Now define $\psi : \End_\F(X) \to \End_\F(X')$ by $\psi(T) = \theta T \theta^{-1}$.  By \eqref{eq:bi1},
we have $\psi(E(x,y)) = E(\theta(x),\theta(y))$;  hence
\[\psi(U(x,y)) = U(\theta(x),\theta(y))\]
for $x,y\in X$.  So $\psi(\cF) = \cF'$. Also,
\[\psi(h_i) = \psi(U(x_i,x_\prmi)) = U(\theta(x_i),\theta(x_\prmi))= U(x_i',x_\prmi') = h_i' \]
for $1\le i \le \rk$.  So $\psi(\hd) = \hd'$; and the inverse dual $\hat\psi : \hd^* \to {\hd'}^*$
of $\psi|_\hd$ maps $\ep_i$ to $\ep'_{i}$ for $1\le i \le r$.   Thus, $\hat\psi(Q) = Q'$ and
$\theta(X_\mu) = X_{\hat\psi(\mu)}'$  for $\mu\in Q$ so that
$\theta(X^\sg_\mu) = {X'}^{\theta_2(\sg)}_{\hat\psi(\mu)}$
for $\sg\in \G$, $\mu\in Q$.  Therefore
$\psi(\cF^\sg_\mu) = {\cF'}^{\theta_2(\sg)}_{\hat\psi(\mu)}$
for $\sg\in \G$, $\mu\in Q$.  Thus, $\psi$ is a bi-isomorphism
of $\cF$ onto $\cF'$, and its restriction maps $\cS$ onto $\cS'$.

(b) For the converse, suppose that $\rk\ge 3$ and $\psi$ is
a bi-isomorphism from $\cS$ to $\cS'$.  Then,  by Remark \ref{rem:rootiso},
$\psi_\rgr : Q\to Q'$  is an isomorphism of the root system
$\Dl$ onto the root system $\Dl'$, so $\psi_\rgr(\set{\ep_1,\dots,\ep_\twor}) = \set{\ep'_1,\dots,\ep'_\twor}$.
Therefore,
there exists an element $\omega'$ of the  Weyl group of $\Dl'$ such that
$\omega'(\psi_\rgr(\ep_i)) = \ep'_i$ for $1\le i \le 2\rk$. Hence,
by Remark \ref{rem:Weyl}, we can assume that $\psi_\rgr(\ep_i) = \ep'_i$ for $1\le i \le \twor$.

By Proposition \ref{prop:SUgradingexp}\,(a), there  exist unique linear bijections $\eta_{ij} : \cA \to \cA'$ for $i,\prmi, j,\prmj$
distinct in $\ttJ$ such that
\begin{equation}
\label{eq:ph1}
\psi(U(x_i\al,x_j)) = U(x_i'.\eta_{ij}(\al),x_j') \andd \eta_{ij}(\cA^\sg) = \cA'^{\theta_2(\sg)}
\end{equation}
for $\al\in \cA$ and $\sg\in \G$, where
\[\theta_2 = \psi_\egr.\]
Similarly, there exist unique linear bijections $\eta_i : X_\an \to X_\an'$ for $i\in \ttJ$ such that
\begin{equation}
\label{eq:ph2}
\psi(U(v,x_i)) = U(\eta_{i}(v),x'_i) \andd  \eta_i(X_\an^\sg) = {X'_\an}^{\theta_2(\sg)}
\end{equation}
for $v\in X_\an$ and $\sg\in \G$. Note that it follows from the second equation   in \eqref{eq:ph1}
and in  \eqref{eq:ph2} that
\[\eta_{ij}(1) = b_{ij}1 \andd \eta_{i}(\vz) = c_i v'_0,\]
where $b_{ij}\in \F^\times$ and $c_i\in \F^\times$.

If $\al,\beta\in\cA$, $v,w\in X_\an$ and
$i,j,k\in \ttJ$, then  applying $\psi$ to the identities in
Proposition \ref{prop:identities} yields:
\begin{alignat}{2}
\notag
\overline{\eta_{ij}(\al)} &= \eta_{ji}(\overline \al)
&&\quad\text{if $i,\prmi,j,\prmj$ are distinct},
\\
\notag
\eta_{ij}(\al) \eta_{\prmj k}(\al) &=
\eta_{ik}(\al\beta)
&&\quad\text{if $i,\prmi,j,\prmj,k, \prmk $ are distinct},
\\
\notag
\eta_\prmi(v) . \eta_{ij}(\al) &= \eta_j(v.\al),
&&\quad\text{if $i,\prmi,j, \prmj$ are distinct,}\\
\notag
\xi'(\eta_i(v) ,\eta_j(w))  &=  \eta_{ij}(\xi(v,w))
&&\quad\text{if $i,\prmi,j,\prmj$ are distinct.}
\end{alignat}

It is easy to verify now that
$\theta_1 := b_{ij}^{-1}\eta_{ij}$ is independent of $i,j$;
$\theta := c_{i}^{-1}\eta_i$ is independent of $i$;   and
the triple $(\theta,\theta_1,\theta_2)$ satisfies all of the required conditions, except that
so far $\theta$ is only defined on $X_\an$.
(We omit the proof of these assertions since the argument  is similar to the one used
in the proof of  Proposition \ref{prop:centroid}.  See also the reasoning in \pref{pgraph:Agr}.)
Finally,  we extend $\theta$ to $X$, by defining $\theta: X_\hyp \to X'_\hyp$
by  $\theta(x_i.\al) = x_i.\theta_1(\al)$ for $\al\in \cA$, $i\in \ttJ$.
The conditions that $\theta$ must satisfy   are then easily checked for this extension.
\end{proof}

\begin{pgraph}
\label{pgraph:invariant}
Suppose $\cL$ is a centreless Lie $\G$-torus of type $\BCr$,
where $\rk \ge 3$.  By Theorem \ref{thm:structure} we know that $\cL$ is bi-isomorphic to
a unitary Lie $\G$-torus $\cS$ constructed from some
$\xi : X\times X \to \cA$ over $(\cA,-)$.  By
Theorem \ref{thm:biisomorphism},  the associative $\Llat$-torus with involution $(\cA,-)$ (up to isograded-isomorphism)
and the $\cA$-rank of $X_\an$ are bi-isomorphism invariants of $\cL$.
We refer to  $(\cA,-)$ as the \emph{coordinate torus} and  $\rank_\cA(X_\an)$ as
the \emph{anisotropic rank} of $\cL$.
\end{pgraph}

With some additional assumptions, we can give a simpler version of the  bi-isomorphism theorem.  For this purpose, we will
use the following lemma:

\begin{lemma}
\label{lem:group} Suppose that $\G$ is an abelian group without $2$-torsion,
$\Llat$ is a subgroup of $\G$ containing $2\G$, and $\Slat$ is a subset of
$\G$ such that $\G = \langle \Llat \cup \Slat \rangle$.  Suppose further
$\G'$, $\Llat'$ and $\Slat'$ satisfy the same assumptions.  Then any isomorphism
of $\Llat$ onto $\Llat'$ mapping $2\Slat$ onto $2\Slat'$ extends uniquely
to an isomorphism of
$\G$ onto $\G'$ which  maps $\Slat$ onto $\Slat'$.
\end{lemma}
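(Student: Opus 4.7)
The plan is to exploit the hypothesis $2\G\subseteq \Llat$ together with the absence of $2$-torsion in $\G$ and $\G'$, which forces any $\sigma\in \G$ to be determined uniquely by its double $2\sigma\in \Llat$. Uniqueness of the extension is then immediate: if $\phi,\phi'\colon \G\to \G'$ are two homomorphisms extending the given isomorphism $\phi_\Llat\colon \Llat\to \Llat'$, then for $\sigma\in \G$ we have $2(\phi(\sigma)-\phi'(\sigma))= \phi_\Llat(2\sigma)-\phi_\Llat(2\sigma)=0$, and lack of $2$-torsion in $\G'$ forces $\phi(\sigma)=\phi'(\sigma)$.

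For existence, I would define $\psi\colon \G\to \G'$ by declaring $\psi(\sigma)$ to be the unique element of $\G'$ whose double equals $\phi_\Llat(2\sigma)$; uniqueness of such an element, once produced, is again by lack of $2$-torsion in $\G'$. To produce it, write $\sigma=\ell+\sum_i n_i\sigma_i$ with $\ell\in\Llat$, $\sigma_i\in \Slat$, $n_i\in\bbZ$, and for each $i$ use the hypothesis $\phi_\Llat(2\Slat)=2\Slat'$ to choose $\sigma_i'\in \Slat'$ with $2\sigma_i'=\phi_\Llat(2\sigma_i)$. Then $\sigma':=\phi_\Llat(\ell)+\sum_i n_i\sigma_i'$ satisfies $2\sigma'=\phi_\Llat(2\sigma)$, as required.

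It then remains a routine verification that $\psi$ is a homomorphism extending $\phi_\Llat$ that maps $\Slat$ bijectively onto $\Slat'$. Additivity: if $\sigma_1+\sigma_2=\sigma_3$, doubling gives $2\psi(\sigma_1)+2\psi(\sigma_2)=2\psi(\sigma_3)$, and lack of $2$-torsion in $\G'$ yields $\psi(\sigma_1)+\psi(\sigma_2)=\psi(\sigma_3)$. For $\ell\in\Llat$, $2\psi(\ell)=\phi_\Llat(2\ell)=2\phi_\Llat(\ell)$ forces $\psi(\ell)=\phi_\Llat(\ell)$. For $\sigma\in \Slat$, $\psi(\sigma)\in \Slat'$ holds by construction, and surjectivity onto $\Slat'$ follows from $\phi_\Llat(2\Slat)=2\Slat'$ combined with the fact that $\sigma'\mapsto 2\sigma'$ is injective on $\Slat'$. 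Finally, injectivity of $\psi$ on $\G$ uses lack of $2$-torsion in $\G$, while surjectivity uses $\G'=\langle \Llat'\cup \Slat'\rangle$ together with the fact that the image of $\psi$ contains $\Llat'$ and $\Slat'$.

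The main subtlety, which I would highlight, is the well-definedness of $\psi$ as a function: decompositions $\sigma=\ell+\sum_i n_i\sigma_i$ are far from unique, so a priori the candidate $\sigma'$ could depend on the decomposition. The key observation that removes this issue is that $\sigma'$ is characterized among elements of $\G'$ by the single relation $2\sigma'=\phi_\Llat(2\sigma)$, a condition which refers only to $\sigma$ and not to any chosen decomposition. Since $\G'$ has no $2$-torsion, there is at most one such element, so any two recipes for $\sigma'$ must agree.
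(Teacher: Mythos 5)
Your proof is correct and is essentially the paper's argument: both define the extension by the single relation $2\psi(\sigma)=\phi(2\sigma)$, using that doubling is injective (no $2$-torsion) and that $2\G\subseteq \Llat$, so the candidate value is characterized independently of any chosen decomposition. The only cosmetic difference is at the end, where the paper deduces bijectivity and $\psi(\Slat)=\Slat'$ by applying the same construction to $\phi^{-1}$ and invoking uniqueness, whereas you verify these properties directly; both routes work.
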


\begin{proof}  The map $\sg \mapsto 2\sg$ is an isomorphism of $\G$ onto $2\G$,
and we write its inverse map from $2\G$ to $\G$ by $\sg \mapsto \frac 12 \sg$.
We use the same notation for $\G'$.

For uniqueness, suppose that $\eta_1$ and $\eta_2$ are homomorphisms from $\G$ onto
$\G'$ that agree on $\Llat$.  Then, if $\sg\in \G$, we have $2\eta_1(\sg) = \eta_1(2\sg)
= \eta_2(2\sg) = 2\eta_2(\sg)$, so $\eta_1(\sg) = \eta_2(\sg)$.

\newcommand\heta{\hat\eta}

For existence, suppose that $\eta : \Llat \to \Llat'$ is an isomorphism such
that $\eta(2\Slat) = 2\Slat'$.  If suffices to show that there exists a homomorphism
$\heta: \G \to \G'$ such that $\heta(\Slat) \subseteq \Slat'$ (since then
the same argument will apply to $\eta^{-1}$). To see this,  observe
first that
\begin{equation}
\label{eq:groupfact}
\eta(2\G) \subseteq 2\G'.
\end{equation}
Indeed, if $\sg\in \Llat$  we have $\eta(2\sg)\in \eta(2\Llat) = 2\eta(\Llat)
= 2\Llat' \subseteq 2\G'$;  whereas if $\sg\in\Slat$ we have
$\eta(2\sg) \in \eta(2\Slat) = 2\Slat' \subseteq 2\G'$, proving \eqref{eq:groupfact}.
Thus, we can define $\heta:\G \to \G'$ by
\[\heta(\sg) = \textstyle \frac 12 \eta(2\sg)\]
for $\sg\in \G$.  Then, if $\sg\in \Llat$, we have
$\heta(\sg) = \frac 12 \eta(2\sg) = \frac 12 2\eta(\sg)= \eta(\sg)$;
and if $\sg\in \Slat$, we have $\heta(\sg) = \frac 12 \eta(2\sg)
\in \frac 12 \eta(2\Slat) = \frac 12 2 \Slat' = \Slat'$.
\end{proof}

We can now prove the following corollary of Theorem \ref{thm:biisomorphism}. Note
that under the assumptions of that theorem we have
$2\supp_\G(X) \subseteq \Llat$ and $2\supp_{\G'}(X') \subseteq \Llat'$ (see Lemma
\ref{lem:suppcompare}\,(b)).

\begin{corollary}
\label{cor:biisomorphism}
Suppose that  the hypotheses of
Theorem \ref{thm:biisomorphism} hold.  In addition assume that
$(\F^\times)^2 = \F^\times$ and that $\G$ and $\G'$ have no 2-torsion.
\begin{itemize}
\item[{\rm (a)}]  If there exists an isograded isomorphism $\ph$ of the $\Llat$-graded
associative algebra with involution $(\cA,-)$ onto the
$\Llat'$-graded associative algebra with involution $(\cA',-)$ such that
\[\ph_\gr(2\supp_\G(X)) = 2\supp_{\G'}(X'),\]
then $\cF$ is bi-isomorphic to $\cF'$ and $\cS$ is bi-isomorphic to $\cS'$.  (Here we are using the notation
$\ph_\gr$ of \pref{pgraph:graded}\,(d).)

\item[{(b)}]  Conversely, if $\rk \ge 3$ and $\cS$ and $\cS'$ are bi-isomorphic,
then there exists an isograded isomorphism $\ph$ as in (a).  \end{itemize}
\end{corollary}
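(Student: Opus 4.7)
The plan is to reduce both directions to the bi-isomorphism theorem (Theorem \ref{thm:biisomorphism}). For the forward direction (a), I would construct a triple $(\theta,\theta_1,\theta_2)$ from $\ph$ and invoke Theorem \ref{thm:biisomorphism}(a); for the converse (b), I would extract $\ph = \theta_1$ from the triple produced by Theorem \ref{thm:biisomorphism}(b).

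Part (b) is essentially immediate. From the triple, $\theta_1$ is already an isomorphism of algebras with involution, and the condition $\theta_1(\cA^\sg) = {\cA'}^{\theta_2(\sg)}$ for $\sg\in\Llat$ shows it is isograded with $(\theta_1)_\gr = \theta_2|_\Llat$. The condition $\theta(X^\sg) = {X'}^{\theta_2(\sg)}$ gives $\theta_2(\supp_\G(X)) = \supp_{\G'}(X')$, and since $2\supp_\G(X) \subseteq \Llat$ by Lemma \ref{lem:suppcompare}(b), restricting to this subset yields $\ph_\gr(2\supp_\G(X)) = 2\supp_{\G'}(X')$.

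For part (a), set $\theta_1 = \ph$. The first step is to extend $\ph_\gr : \Llat \to \Llat'$ to a group isomorphism $\theta_2 : \G \to \G'$ with $\theta_2(\supp_\G(X)) = \supp_{\G'}(X')$: the hypotheses of Lemma \ref{lem:group} are met thanks to the $2$-torsion-free assumption on $\G,\G'$, the inclusion $2\G \subseteq \Llat$ from Lemma \ref{lem:suppcompare}(b), and the identity $\G = \langle \Llat \cup \supp_\G(X) \rangle$ (from $\G = \langle \G_\an\rangle$ in Lemma \ref{lem:suppcompare}(a)). The second step is to build $\theta : X \to X'$. Fix compatible bases as in \pref{con:uLietorus}. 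On $X_\hyp$, simply put $\theta(x_i.\al) = x'_i.\ph(\al)$; since the basis elements are all of degree $0$ and the Gram matrices both equal $(\delta_{i\prmj})$, compatibility with the grading and form is automatic. For $X_\an$, apply Theorem \ref{thm:herm}(b) to pick orthogonal homogeneous bases $\{v_k\}_{k\in\ttK}$ of $X_\an$ and $\{v'_{k'}\}_{k'\in\ttK'}$ of $X'_\an$, with $\rho_k = \deg_\G(v_k)$ and $\gm_k = \xi(v_k,v_k) \in \cA_+^{2\rho_k}$ (and primed analogs). The disjoint coset decomposition $\G_\an = \bigsqcup_k (\rho_k + \Llat)$ from Lemma \ref{lem:suppcompare}(c), together with $\theta_2(\Llat) = \Llat'$ and $\theta_2(\G_\an) = \G'_\an$, induces a canonical bijection $\ttK \leftrightarrow \ttK'$ under which we may assume $\theta_2(\rho_k) = \rho'_k$. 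Since $(\cA,-)$ is an $\Llat$-torus, each space ${\cA'}^{2\rho'_k}$ is one-dimensional over $\F$, so $\ph(\gm_k) = \lambda_k \gm'_k$ for some $\lambda_k \in \F^\times$; using $(\F^\times)^2 = \F^\times$, write $\lambda_k = c_k^2$ and define $\theta(v_k.\al) := c_k v'_k.\ph(\al)$. Assembling $\theta = \theta|_{X_\hyp} \oplus \theta|_{X_\an}$, one checks \eqref{eq:bi1}--\eqref{eq:bi3} component-wise, and Theorem \ref{thm:biisomorphism}(a) then delivers the bi-isomorphisms of $\cF$ and $\cS$.

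The main obstacle is the construction of $\theta|_{X_\an}$: the choice of scalars $c_k$ intertwining $\xi$ with $\xi'$ on the anisotropic part is precisely where the hypothesis $(\F^\times)^2 = \F^\times$ is essential, while the matching of indexing sets for the orthogonal bases depends crucially on the coset description of $\G_\an$ in Lemma \ref{lem:suppcompare}(c) and on the fact that $\theta_2$ carries $\G_\an$ onto $\G'_\an$ compatibly with $\Llat, \Llat'$; once these are in place, the form-preservation computation $\xi'(c_k v'_k.\ph(\al),\,c_k v'_k.\ph(\beta)) = c_k^2 \overline{\ph(\al)}\gm'_k\ph(\beta) = \ph(\overline{\al}\gm_k \beta)$ is a single line.
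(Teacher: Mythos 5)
Your argument is correct and follows essentially the paper's own route: part (b) is the same extraction of $\ph=\theta_1$ from the triple supplied by Theorem \ref{thm:biisomorphism}, and part (a) builds the triple $(\theta,\theta_1,\theta_2)$ exactly as the paper does, via Lemma \ref{lem:group}, the coset decomposition of Lemma \ref{lem:suppcompare}\,(c), and the hypothesis $(\F^\times)^2=\F^\times$ to fix the scalars on the anisotropic part. The one (essentially cosmetic) difference is where the coset offset is absorbed: the paper keeps the compatible bases fixed and inserts homogeneous correction factors $\beta'_i\in{\cA'}^{\sg'_i}$ into the formula for $\theta$, whereas you "may assume $\theta_2(\rho_k)=\rho'_k$" on the nose -- this is legitimate but deserves a line of justification (replace $v'_{k}$ by $v'_{k}.\al'$ with $0\ne\al'\in{\cA'}^{\sg'}$, which stays homogeneous, orthogonal to the rest and anisotropic since $\cA'$ is a graded division algebra), because the coset decomposition by itself only gives $\theta_2(\rho_k)\equiv\rho'_k \pmod{\Llat'}$, and without the adjustment the one-dimensionality argument producing $\lambda_k\in\F^\times$ would not apply directly.
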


\begin{proof} We assume that $\cS$ has been constructed
using a compatible $\cA$-basis $\set{x_i}_{i\in \ttI}$ for $\xi$ and we use
the notation of \pref{con:uLietorus}.  We make similar assumptions for  $\xi'$ and $\cS'$ using primed notation.
Note that  $\ttJ = \ttJ' = \set{1,\dots,\twor}$.
Let $\Slat = \supp_\G(X)$ and $\Slat' = \supp_{\G'}(X')$.

(a) Our goal is to construct a triple of maps $(\theta,\theta_1,\theta_2)$,
where $\theta: X \to X'$, $\theta_1 : \cA \to \cA'$ and $\theta_2 : \G \to \G'$
satisfy conditions \eqref{eq:bi1}--\eqref{eq:bi3}  of Theorem \ref{thm:biisomorphism}\,(a).

By Lemma \ref{lem:suppcompare}\,(a)  and (b), we have the assumptions of
Lemma \ref{lem:group} for $\ph_\gr: \Llat \to \Llat'$.  Hence, there is  an extension
$\theta_2 : \G \to \G'$ of $\ph_\gr$
such that $\theta_2(\Slat) = \Slat'$.  But by Lemma \ref{lem:suppcompare}\,(c), it follows that
$\Slat = \distu_{i\in \ttK} (\rho_i + \Llat)$ and
$\Slat' = \distu_{i\in \ttK'} (\rho'_i + \Llat')$.  So there exists a bijection
$\pi : \ttK \to \ttK'$ such that  $\theta_2(\rho_i) = \rho_{\pi(i)}' +\sg_i'$
where $\sg'_i\in \Llat'$ for $i\in \ttK$. Moreover,
$\pi(k_0) = k'_0$ and $\sg'_{k'_0} = 0$.  We extend $\pi$ to a bijection
$\pi : \ttI \to \ttI'$ by defining $\pi(i) = i$ for $i\in \ttJ$, and we set
$\sg_i' = 0$ for $i\in \ttJ$.  Then,
\begin{equation}
\label{eq:bin0}
\theta_2(\rho_i) = \rho_{\pi(i)}' +\sg_i'
\end{equation}
for $i\in \ttI$, so
\begin{equation}
\label{eq:bin1}
\ph_\gr(2\rho_i) = 2\rho_{\pi(i)}' +2\sg_i',
\end{equation}
for $i \in \ttI$. Now we choose
\begin{equation}
\label{eq:bin2}
0\ne \beta_i'\in {\cA'}^{\sg_i'} \quad\text{for $i\in \ttI$},
\end{equation}
with $\beta'_i = 1$ for $i\in \ttJ\cup\set{k_0}$. But
by \eqref{eq:rhogam}, we have
$\gm_i \in {\cA}^{2\rho_i}$ and  $\gm'_{\pi(i)} \in {\cA'}^{2\rho'_{\pi(i)}}$ for $i\in \ttI$.
Hence,  by \eqref{eq:bin1} and \eqref{eq:bin2}, we have
$\ph(\gm_i) = \F^\times \overline{\beta_i'} \gamma'_{\pi(i)} \beta_i'$
for $i\in \ttI$.
Since by assumption ${\F^\times}^2 = \F^\times$,  we can alter our choice of  $\beta_i'$   so that
\begin{equation}
\label{eq:bin3}
\ph(\gm_i) = \overline{\beta_i'} \gamma'_{\pi(i)} \beta_i'.
\end{equation}
holds for $i\in \ttI$. Further, by \eqref{eq:gammacond},
we may take  $\beta'_i = 1$ for $i\in \ttJ\cup\set{k_0}$, and hence
 \begin{equation}
\label{eq:bin4}
\beta_\prmi'= \beta_i'
\end{equation}
for  $i\in \ttI$.

Finally, let $\theta_1 = \ph$, and define $\theta : X \to X$ by
\begin{equation*}
\label{eq:bin5}
\textstyle \theta(\sum_{i=1}^\ell  x_i.\al_i) = \sum_{i=1}^\ell  x'_{\pi(i)}.(\beta_i'\ph(\al_i)).
\end{equation*}
It is a straightforward matter using
\eqref{eq:bin0}--\eqref{eq:bin4} to check conditions \eqref{eq:bi1}--\eqref{eq:bi3}
in Theorem  \ref{thm:biisomorphism}~(b).  We leave this to the reader.

(b) For the converse, suppose that $\psi$ is a bi-isomorphism of $\cS$ onto $\cS'$.
Let $(\theta,\theta_1,\theta_2)$ be the triple promised by
Theorem \ref{thm:biisomorphism}.  Then, by \eqref{eq:bi3}, $\ph := \theta_1$
is an isograded isomorphism of $(\cA,-)$ onto $(\cA',-)$ with $\ph_\gr = \theta_2|_\Llat$.
Finally, by \eqref{eq:bi3}, $\theta_2(S) = S'$, so $\ph_\gr(2S) = 2S'$.
\end{proof}

\section[Lie $n$-tori]{\cm LIE $n$-TORI AND EXTENDED AFFINE LIE ALGEBRAS OF TYPE $\BCr$}
\label{sec:EALA}

If $\G$ is a free abelian group of finite rank $n$, a Lie $\G$-torus is  called an \emph{Lie $n$-torus}.
It is known that Lie $n$-tori are the starting point for the construction
of extended affine Lie algebras (EALAs) of nullity $n$ (see Section \ref{subsec:EALA}).
With that as motivation,   we apply our results from  Chapter \ref{sec:main} to the special case
of Lie $n$-tori to obtain a classification up to bi-isomorphism of centreless Lie $n$-tori of
type $\BCr$ for $\rk \ge 3$.  We conclude by discussing the construction
of EALAs from these Lie $n$-tori.

If $n=0$, it is well known that a centreless Lie $n$-torus $\cL$ is a finite-dimensional split simple Lie algebra
(see for example \cite[Rem.~1.2.4]{ABFP}).  Since that case is well understood,
\emph{our focus is the case  when $n$ is a positive
integer, and we make that assumption for the rest of this  chapter}.

\subsection{Associative $n$-tori with involution}\
\label{subsec:associativentori}

If $\Llat$ is a free abelian group of rank $n$, an associative $\Llat$-torus with involution
is called an \emph{associative $n$-torus with involution}.  In view of
Theorem \ref{thm:structure}, our first step must be to understand these graded algebras with involution.
Fortunately, they have been classified by Yoshii \cite[Thm.~2.7]{Y2} using elementary quantum matrices.  This classification
can be formulated using quadratic forms over $\bbZ_2$ \cite{AFY, AF}, and we recall that point of view now.

Throughout this section, \emph{we suppose that $\Llat$ is a free
abelian group of rank $n$}.

\begin{pgraph}
\label{not:tilde}
We let $\tL = \G/2\G$ with canonical map
$\sg \to \tsg$,
in which case $\tL$ is an $n$-dimensional vector space over $\Ztwo = \bbZ/2\bbZ$. If
$\eta : \G \to \G'$ is an isomorphism of free abelian groups of rank $n$, then
$\eta$ induces a vector space isomorphism $\tilde\eta : \tL \to \tL'$
defined by $\widetilde\eta(\tsg) = \widetilde{\eta(\sg)}$; and any vector
space isomorphism from $\tL$ to $\tL'$ is induced by some $\eta$ in this way.
\end{pgraph}

\begin{pgraph}
\label{pgraph:quadform1}  Suppose that  $\kappa : \tL \to \Ztwo$
is a  \emph{quadratic form} on $\tL$ over $\Ztwo$.
Recall that this means that the
map
$\kappa_\ttp : \tL \times\tL \to \Ztwo$ defined by
\[\kappa_\ttp(\tsg,\ttau) = \kappa(\tsg+\ttau) + \kappa(\tsg)+ \kappa(\ttau)\]
is $\Ztwo$-bilinear. (See for example
\cite[\S 5.2]{HO}.) Then $\kappa_\ttp$ is an alternating
form on $\tL$ ($\kappa_\ttp(\tsg,\tsg) = 0$ for $\tsg\in \tL$) called the
\emph{polar form}  of $\kappa$.  If  $\kappa_\ttb : \tL \times \tL \to \Ztwo$ is a $\Ztwo$-bilinear form,
we say that $\kappa_\ttb$ is \emph{compatible}
with $\kappa$ if
\[\kappa_\ttp(\tsg,\ttau) = \kappa_\ttb(\tsg,\ttau) + \kappa_\ttb(\ttau,\tsg)\]
for
$\tsg,\ttau\in \tL$.  It is easy to check  that such a $\kappa_\ttb$ exists (see for example
\cite[5.1.15]{HO}) and is unique
up to the addition of a symmetric bilinear form on $\tL$.
\end{pgraph}

\begin{pgraph}
\label{pgraph:quadform3}
If $\kappa : \tL \to \Ztwo$ is a quadratic form, we
adopt the
notation
\[\rad(\kappa) := \set{\tsg\in\tL \suchthat \kappa(\tsg) = 0 \text{ and } \kappa_\ttp(\tsg,\tL) = 0}\]
for the \emph{radical} of
$\kappa$;
and
\[\iso(\kappa) := \set{\tsg\in\tL \suchthat \kappa(\tsg) = 0}\]
for
the set of \emph{isotropic vectors} for $\kappa$.  The \emph{orthogonal group}
of $\kappa$ is the group
$\Orth(\kappa) := \set{\psi \in \GL(\tL) \suchthat \kappa(\psi(\tsg)) =
\kappa(\tsg) \text{ for } \tsg\in \tL}$
of isometries of $\kappa$.
\end{pgraph}

We now see that associative tori with involution are classified by quadratic forms over   $\Ztwo$.

\begin{proposition} \  \textrm{\cite[Prop.~11.4]{AFY}}
\label{prop:assocclass}
 If $(\cA,-)$ is an associative  $\Llat$-torus with involution,
then there exists a unique
quadratic form $\kappa : \tL \to \Ztwo$, called the \emph{mod-2 quadratic form} for $(\cA,-)$,
such that
\begin{equation}
\label{eq:propkappa}
\overline{\al_\sg} = (-1)^{\kappa(\tsg)} \al_\sg \andd
\al_\sg \al_\tau = (-1)^{\kappa_\ttp(\tsg,\ttau)} \al_\tau \al_\sg
\end{equation}
for $\al_\sg\in\cA^\sg$,
$\al_\tau\in\cA^\tau$, $\sg,\tau\in \Llat$.%
\footnote{As usual, we abuse notation and write an element $\mu+2\bbZ$ in $\Ztwo$ as
$\mu$, in which case $(-1)^\mu$ is well defined.}
Moreover, any quadratic form on $\tL$ arises from some associative  $\Llat$-torus with involution
in this way (see the construction in \pref{con:assocntori} below).
Finally, two associative $n$-tori are isograded isomorphic if and only if
their mod-2 quadratic forms are isometric.
\end{proposition}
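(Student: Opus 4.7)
The plan is to work directly with invertible spanning elements $u_\sg \in \cA^\sg$ and $u'_{\sg'} \in {\cA'}^{\sg'}$ (which exist by the torus condition), using \eqref{eq:propkappa} to translate the existence of an isograded-isomorphism into a coboundary problem.

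For the forward direction, suppose $\ph : (\cA,-) \to (\cA',-)$ is an isograded-isomorphism with associated group isomorphism $\ph_\gr : \Llat \to \Llat'$. Since $\ph$ is an algebra isomorphism, $\ph(u_\sg)$ is a nonzero spanning element of ${\cA'}^{\ph_\gr(\sg)}$, so \eqref{eq:propkappa} applied in $\cA'$ gives $\overline{\ph(u_\sg)} = (-1)^{\kappa'(\widetilde{\ph_\gr(\sg)})}\ph(u_\sg)$. On the other hand, $\ph$ commutes with the involution, so $\overline{\ph(u_\sg)} = \ph(\overline{u_\sg}) = (-1)^{\kappa(\tsg)}\ph(u_\sg)$. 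Comparing forces $\kappa = \kappa' \circ \widetilde{\ph_\gr}$, so $\widetilde{\ph_\gr}$ is an isometry.

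For the converse, I would start with an isometry $\widetilde{\eta} : \tL \to \tL'$ from $\kappa$ to $\kappa'$ and lift it to a group isomorphism $\eta : \Llat \to \Llat'$ as in \pref{not:tilde}. Define $2$-cocycles $f, f' : \Llat \times \Llat \to \F^\times$ by $u_\sg u_\tau = f(\sg,\tau) u_{\sg+\tau}$ and analogously for $f'$, and look for an isograded-isomorphism of the form $\ph(u_\sg) = c_\sg u'_{\eta(\sg)}$ for suitable scalars $c_\sg \in \F^\times$. Compatibility with the involution is automatic once $\widetilde{\eta}$ is an isometry, because a direct computation using \eqref{eq:propkappa} shows that both $\ph(\overline{u_\sg})$ and $\overline{\ph(u_\sg)}$ equal $(-1)^{\kappa(\tsg)} c_\sg u'_{\eta(\sg)}$. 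Multiplicativity then reduces to requiring
\[
g(\sg,\tau) := \frac{f'(\eta(\sg),\eta(\tau))}{f(\sg,\tau)} = \frac{c_{\sg+\tau}}{c_\sg c_\tau},
\]
i.e., to $g$ being a coboundary.

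The main step, and the chief obstacle, is producing these scalars $c_\sg$. The cocycle property of $g$ is inherited from associativity in both algebras. Moreover, $g$ is symmetric: \eqref{eq:propkappa} gives $f(\sg,\tau)/f(\tau,\sg) = (-1)^{\kappa_\ttp(\tsg,\ttau)}$ (and similarly for $f'$), and because $\widetilde\eta$ is an isometry of quadratic forms it is automatically an isometry of polar forms, so the skew parts of $f$ and $f'\circ(\eta\times\eta)$ cancel in $g$. Since $\Llat \cong \bbZ^n$ is free abelian, $\mathrm{Ext}^1_{\bbZ}(\Llat,\F^\times) = 0$, and every symmetric $2$-cocycle on $\Llat$ with values in $\F^\times$ is a coboundary; concretely, one may choose $c_\sg$ freely on a $\bbZ$-basis of $\Llat$ and extend via $c_{\sg+\tau} = g(\sg,\tau) c_\sg c_\tau$, with the cocycle identity and symmetry of $g$ ensuring the inductive extension is well defined.
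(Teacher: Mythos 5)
Your argument for the final assertion (isograded-isomorphic if and only if the mod-2 quadratic forms are isometric) is essentially sound: the forward direction is the right one-line computation, and the converse via the symmetric $2$-cocycle $g$ and the vanishing of $\mathrm{Ext}^1_{\bbZ}(\Llat,\F^\times)$ for $\Llat$ free is a legitimate, self-contained route. (The paper itself gives no proof here; it simply cites \cite[Prop.~11.4]{AFY}, so there is nothing in-text to compare with beyond noting that your coboundary argument is a reasonable way to get the classification sentence. Minor points you should still tidy: normalize so that $c_0=1$, hence $\ph(1)=1$, and the inductive extension of the $c_\sg$ over a $\bbZ$-basis needs the standard well-definedness check, which is exactly the $\mathrm{Ext}$-vanishing you quote.)

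The genuine gap is that you prove only the last sentence of the proposition while taking the first, and logically prior, assertion as given: you apply \eqref{eq:propkappa} in both $\cA$ and $\cA'$ from the start, but the existence and uniqueness of the mod-2 quadratic form is itself part of the statement and is not automatic. What has to be shown is: since the involution preserves the grading and $\dim_\F \cA^\sg = 1$, one has $\overline{u_\sg} = \epsilon(\sg) u_\sg$ with $\epsilon(\sg)^2=1$, so $\epsilon(\sg)=\pm 1$ (here $\characteristic(\F)\ne 2$ is used); similarly $u_\sg u_\tau = c(\sg,\tau) u_\tau u_\sg$, and applying the anti-automorphism to $u_\sg u_\tau$ gives $c(\sg,\tau) = \epsilon(\sg)\epsilon(\tau)\epsilon(\sg+\tau) = \pm 1$, while expanding $(u_\sg u_{\sg'})u_\tau$ shows $c$ is bimultiplicative. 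From $\epsilon(\sg+\tau)=\epsilon(\sg)\epsilon(\tau)c(\tau,\sg)$ and $c(\tau,\tau)=1$ one deduces $\epsilon(\sg+2\tau)=\epsilon(\sg)$, so $\epsilon$ descends to $\tL=\Llat/2\Llat$ and defines $\kappa$; bimultiplicativity of $c$ then says precisely that $\kappa_\ttp$ is $\Ztwo$-bilinear, i.e.\ $\kappa$ is a quadratic form satisfying both relations in \eqref{eq:propkappa}, and uniqueness is immediate since each $\cA^\sg\ne 0$. Without this (roughly the content of \cite[Prop.~11.4]{AFY}) your use of \eqref{eq:propkappa} in $\cA'$, and indeed the very definition of $\kappa$ and $\kappa'$, is unsupported. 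The intermediate claim that every quadratic form on $\tL$ arises is acceptable to delegate to the construction in \pref{con:assocntori}, as the proposition itself does, but the existence/uniqueness step above must be supplied.
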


\begin{pgraph}
\label{not:L+}
Suppose $(\cA,-)$ is an associative  $\Llat$-torus
with mod-2 quadratic form $\kappa$. There are
two
subsets of $\Llat$ that will play  a key role in what  follows.
First define
\[\Gm =\Gm(\cA,-) = \supp_\Llat (Z(\cA,-)).\]
Second recall that   $L_+ = L_+(\cA,-) = \supp_\Llat(A_+)$
(see \pref{pgraph:graded}).
By \eqref{eq:propkappa}  we have
\begin{equation}
\label{eq:L+}
\Gm= \set{\sg\in\Llat \suchthat \tsg\in \rad(\kappa)}
\andd
L_+ = \set{\sg\in\Llat \suchthat \tsg \in \iso(\kappa)}.
\end{equation}
\end{pgraph}

We now describe a construction of an associative $n$-torus with a given mod-2 quadratic form.

\begin{pgraph}
\label{con:assocntori} Suppose  that $\kappa : \tL \to \Ztwo$ is a quadratic form and
that $\kappa_\ttb : \tL \times \tL \to \Ztwo$ is a bilinear form that is compatible
with $\kappa$ (see \pref{pgraph:quadform1}).
Let $\cA = \oplus_{\sg\in\Llat} \F t^\sg$, the $\Llat$-graded vector space with $0\ne t^\sg\in \cA^\sg$ for $\sg\in \Llat$;
and define a product and involution on $\cA$ by
\begin{equation}
\label{eq:ATIdef}
t^\sg t^\tau = (-1)^{\kappa_\ttb(\tsg,\ttau)}t^{\sg + \tau}
\andd \overline{t^\sg} = (-1)^{\kappa(\tsg)} t^\sg.
\end{equation}
Then, one checks that $(\cA,-)$ is an associative $\Llat$-torus with involution whose
mod-2 quadratic form is $\kappa$.
We denote
$(\cA,-)$ by $\ATI_\Llat(\kappa,\kappa_\ttb)$ and call it the
\emph{associative $\Llat$-torus with involution determined by $(\kappa,\kappa_\ttb)$}.
Note that by Proposition \ref{prop:assocclass}, $\ATI_\Llat(\kappa,\kappa_\ttb)$ depends up to isograded-isomorphism
only on $\kappa$.

The construction just described takes on a familiar form  if we choose a $\bbZ$-basis
$\set{\sg_1,\dots,\sg_n}$  for $\Llat$.
Let $t_i = t^{\sg_i}$ for $1\le i \le n$.  Then,
$\cA$ is generated as an algebra over $\F$ by $t_1^{\pm 1},\dots,t_n^{\pm1}$; and we have
$\overline{t_i}  = (-1)^{b_i} t_i$ and $t_jt_i =   (-1)^{a_{ij}} t_i t_j $,
where $b_i = \kappa(\widetilde{\sigma_i})\in \Ztwo$ for
$1\le i \le n$ and  $a_{ij} = \kappa_\ttp(\widetilde{\sigma_i},\widetilde{\sigma_j})\in \Ztwo$ for
$1\le i \le n$.
Thus $\ATI_\Llat(\kappa,\kappa_\ttb)$ is the \emph{quantum torus with involution
determined by the vector $((-1)^{b_i})$ and
the matrix $((-1)^{a_{ij}})$} (\cite[\S III.3]{AABGP}, \cite[\S 2]{AG}).
\end{pgraph}

\begin{remark}
\label{rem:quadprop}
Suppose that $\kappa: \tL \to \Ztwo$ is a quadratic form. It is easy to see
that it is possible to choose a bilinear form $\kappa_\ttb : \tL \times \tL \to \Ztwo$  that is compatible
with $\kappa$ and satisfies
\begin{equation}
\label{eq:quadprop1}
\kappa_\ttb(\tL,\rad(\kappa)) = \kappa_\ttb(\rad(\kappa),\tL) = 0.
\end{equation}
In that case, if we let $(\cA,-) =  \ATI_\Llat(\kappa,\kappa_\ttb)$,
we have the convenient property
\begin{equation}
\label{eq:quadprop2}
t^\sg t^\tau = t^\tau t^\sg = t^{\sg + \tau} \qquad \text{for } \sg\in \Gamma(\cA,-), \tau\in\Llat.
\end{equation}
In particular, $Z(\cA,-) = \F[\Gamma]$, the group algebra of $\Gamma$.
\end{remark}

\begin{pgraph}
\label{pgraph:quadclass}
To classify associative $\Llat$-tori with involution, one can proceed as follows:
Fix a basis $\set{\sg_1,\dots,\sg_n}$  for $\Llat$.  Then
write down representatives of the isometry classes of quadratic forms on $\Llat$
using the well-known  classification (see \cite[Chap.~I, \S 16]{D} or \cite[Rem.~5.17]{AFY}).
Next, for each $\kappa$ in the list, choose $\kappa_\ttb$ compatible with $\kappa$ and satisfying
\eqref{eq:quadprop1},
and then construct
the associative $\Llat$-torus with involution $\ATI_\Llat(\kappa,\kappa_\ttb)$. This gives the
list of associative $\Llat$-tori with involution up to isograded-isomorphism,
and each torus in the list satisfies  \eqref{eq:quadprop2}.
\end{pgraph}

\begin{example}
\label{ex:n=3a}
To give a concrete  example,
we suppose that $n=3$ and fix a $\mathbb Z\hbox {\rm-basis}$  $\set{\sg_1,\sg_2,\sg_3}$ for $\Llat$.
The following table lists the five possible quadratic forms $\kappa$ on $\Llat$ up to isometry,
and the corresponding forms $\kappa_\ttp$ and $\kappa_\ttb$, obtained as in \pref{pgraph:quadclass}.
\begin{equation}
\label{table:quad}
\begin{tabular}
[c]{|c |   c |   c|} \hline
$\kappa$ & \quad $\kappa_\ttp$\quad  &$\quad \kappa_\ttb$\quad \\\hline\hline
$0$ & $0$ & $0$\\\hline
$\ell_3$  & $0$ & $0$\\\hline
$\quad \ell_2+\ell_3+\ell_2\ell_3$ \quad &\quad $\ell_2\ell'_3 + \ell_3\ell'_2$\quad &\quad $\ell_2\ell'_3$\quad\\\hline
$ \ell_2\ell_3$ & $\ell_2\ell'_3 + \ell_3\ell'_2$ & $\ell_2\ell'_3$\\\hline
$\ell_3 + \ell_1\ell_2$ & $\ell_1\ell'_2 + \ell_2\ell'_1$ & $\ell_1\ell'_2$ \\\hline
\end{tabular}\
.
\end{equation}
In each row, we have given $\kappa$ by displaying its value at $\sum_{i=1}^3 \ell_i\widetilde{\sigma_i}$, and we have
given $\kappa_\ttp$ and $\kappa_\ttb$ by displaying their values on the pairs
$\left(\sum_{i=1}^3 \ell_i\widetilde{\sigma_i},\sum_{j=1}^3 \ell'_j\widetilde{\sigma_j}\right)$.
The list of associative $3$-tori with involution up to bi-isomorphism is the corresponding
list of tori  $\ATI_\Llat(\kappa,\kappa_\ttb)$.
\end{example}

We finish this section by recording a lemma that will be needed  in the next section.

\begin{proposition} \label{prop:induced} Suppose that
 $(\cA,-)$ is an associative  $\Llat$-torus with mod-2 quadratic form $\kappa$. Then, the map
$\ph \mapsto \widetilde{\ph_\gr}$ is an epimorphism
of the group
of all isograded automorphisms%
\footnote{It is known \cite[Lemma 1.2]{Y2} that any automorphism of $(\cA,-)$ is isograded,
but we will not use that here.}
of $(\cA,-)$
onto the orthogonal group  $\Orth(\kappa)$.
\end{proposition}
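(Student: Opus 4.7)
The assignment $\ph \mapsto \widetilde{\ph_\gr}$ is clearly a group homomorphism, since $(\ph\psi)_\gr = \ph_\gr\psi_\gr$ and the reduction $\Aut(\Llat) \to \GL(\tL)$ from \pref{not:tilde} is a homomorphism. To check that its image lies in $\Orth(\kappa)$, note that an isograded automorphism $\ph$ both preserves the grading and commutes with the involution, so $\ph(\cA_\pm^\sg) = \cA_\pm^{\ph_\gr(\sg)}$ for every $\sg \in \Llat$. By the first identity in \eqref{eq:propkappa}, $\cA^\sg \subseteq \cA_+$ iff $\kappa(\tsg) = 0$, and $\cA^\sg \subseteq \cA_-$ iff $\kappa(\tsg) = 1$; hence $\kappa(\widetilde{\ph_\gr(\sg)}) = \kappa(\tsg)$ for all $\sg$, i.e., $\widetilde{\ph_\gr} \in \Orth(\kappa)$.

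For surjectivity, by Proposition \ref{prop:assocclass} we may assume $(\cA,-) = \ATI_\Llat(\kappa,\kappa_\ttb)$ for some $\kappa_\ttb$ compatible with $\kappa$, so that $\cA$ has a homogeneous basis $\{t^\sg\}_{\sg\in\Llat}$ with multiplication and involution given by \eqref{eq:ATIdef}. Given $\eta \in \Orth(\kappa)$, lift it by the surjectivity statement in \pref{not:tilde} to a group automorphism $\eta_0$ of $\Llat$ with $\widetilde{\eta_0} = \eta$. I will define $\ph : \cA \to \cA$ on the basis by
\[
\ph(t^\sg) = c_\sg\, t^{\eta_0(\sg)}
\]
for suitable scalars $c_\sg \in \F^\times$ with $c_0 = 1$, extended by linearity. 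Involution-compatibility of $\ph$ reduces to the equality $\kappa(\tsg) = \kappa(\eta(\tsg))$, which holds by hypothesis on $\eta$ and imposes no condition on $c_\sg$. Multiplicativity of $\ph$ is equivalent to the cocycle condition
\[
c_{\sg+\tau} = c_\sg c_\tau \cdot (-1)^{f(\tsg,\ttau)}, \qquad f(\tsg,\ttau) := \kappa_\ttb(\eta(\tsg),\eta(\ttau)) + \kappa_\ttb(\tsg,\ttau)
\]
in $\Ztwo$, so the crux of the argument is the existence of such a $c$.

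The form $f$ on $\tL \times \tL$ is $\Ztwo$-bilinear, and it is symmetric, because
\[
f(\tsg,\ttau) + f(\ttau,\tsg) = \kappa_\ttp(\eta(\tsg),\eta(\ttau)) + \kappa_\ttp(\tsg,\ttau) = 0,
\]
using that $\eta$ preserves the polar form $\kappa_\ttp$ of $\kappa$. Consequently $\omega(\sg,\tau) := (-1)^{f(\tsg,\ttau)}$ is a symmetric bimultiplicative map $\Llat \times \Llat \to \{\pm 1\}$, and the problem becomes to realize $\omega$ as a coboundary $\delta c$. Fix a $\bbZ$-basis $\sg_1, \dots, \sg_n$ of $\Llat$, let $\mu_{ij} = \omega(\sg_i,\sg_j)$, and set
\[
c_\sg = \prod_{i<j} \mu_{ij}^{k_i k_j}\cdot \prod_i \mu_{ii}^{k_i(k_i-1)/2}
\qquad \text{for } \sg = \textstyle\sum_i k_i\sg_i.
\]
Using the identity $(k+l)(k+l-1)/2 = k(k-1)/2 + l(l-1)/2 + kl$ together with $\mu_{ij} = \mu_{ji}$, a direct computation shows that this $c$ satisfies the cocycle condition. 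The resulting $\ph$ is then an isograded automorphism of $(\cA,-)$ with $\ph_\gr = \eta_0$, and hence $\widetilde{\ph_\gr} = \eta$. The main obstacle is precisely the cohomological verification in this last paragraph; the key point that unlocks it is that $\eta \in \Orth(\kappa_\ttp)$ forces $f$ to be symmetric, which is what permits the explicit construction of $c$ from a $\bbZ$-basis of $\Llat$.
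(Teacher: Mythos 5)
Your proof is correct, and it is genuinely more self-contained than what the paper does: the paper disposes of this proposition in one line by saying that the proof of Proposition 1.14(iii) of \cite{AY} "can be easily adapted," whereas you construct the lift explicitly. Your first paragraph (image lies in $\Orth(\kappa)$, using that $\cA^\sg\subseteq\cA_+$ iff $\kappa(\tsg)=0$ by \eqref{eq:propkappa}) is exactly right, and your surjectivity argument is sound: the cocycle condition you derive from \eqref{eq:ATIdef} is the correct one, the symmetry of $f(\tsg,\ttau)=\kappa_\ttb(\eta(\tsg),\eta(\ttau))+\kappa_\ttb(\tsg,\ttau)$ does follow from $\eta$ preserving $\kappa_\ttp$ (which is automatic for $\eta\in\Orth(\kappa)$), and your explicit formula for $c_\sg$ does satisfy $c_{\sg+\tau}=c_\sg c_\tau(-1)^{f(\tsg,\ttau)}$ by the identity $(k+l)(k+l-1)/2=k(k-1)/2+l(l-1)/2+kl$ together with $\mu_{ij}=\mu_{ji}$; involution-compatibility indeed imposes no condition on the signs $c_\sg$. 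The one step you should spell out is the reduction ``we may assume $(\cA,-)=\ATI_\Llat(\kappa,\kappa_\ttb)$'': Proposition \ref{prop:assocclass} gives an isograded isomorphism $\chi$ of $(\cA,-)$ onto the model, and transporting an isograded automorphism along $\chi$ conjugates its induced map on $\tL$ by $\widetilde{\chi_\gr}$; since $\widetilde{\chi_\gr}\in\Orth(\kappa)$ (by the same argument as in your first paragraph, because $\chi$ intertwines the two involutions and both tori have mod-2 form $\kappa$), conjugation preserves $\Orth(\kappa)$, so surjectivity for the model implies surjectivity for $(\cA,-)$. With that sentence added, your argument is complete; what it buys over the paper's citation is a visible identification of exactly where $\eta\in\Orth(\kappa)$ is needed (the symmetry of $f$, which makes the sign system a coboundary), at the cost of a page of computation the authors chose to outsource.
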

\begin{proof}  The proof of \cite[Prop.~1.14]{AY}~(iii) can be easily
adapted to prove this fact.
\end{proof}

\subsection{Centreless Lie $n$-tori of type $\BCr$}\
\label{subsec:Lientori}

In this section, \emph{we assume that $\characteristic(\F) = 0$
and  $(\F^\times)^2 = \F^\times$}.  Note that this includes the important case when $\F$ is algebraically closed of
characteristic 0.

\begin{pgraph}
\label{con:uLientorus}
To construct centreless Lie $n$-tori as unitary Lie tori,  we suppose that

\begin{itemize}
\item[(I1)]    $\rk$ is a positive integer.
\item[(I2)] $\Llat$ is a free abelian group of rank $n$ and
$(\cA,-)$ is an associative $\Llat$-torus with involution.
\item[(I3)]  $\tM$ is a subset of $\iso(\kappa)$ with $0\in \tM$,
where  $\kappa : \tL \to \bbZ_2$  is the mod-2 quadratic form of $(\cA,-)$.
\end{itemize}

Beginning with these ingredients,  we construct a graded hermitian form $\xi : X \times X \to \cA$ over $(\cA,-)$
and hence a unitary Lie $n$-torus $\fsu(X,\xi)$.

First we  build an abelian group $\G$ containing $\Llat$ that will be our grading group for $\xi$.
To do this, we regard   $\Llat$ as a subgroup of the vector space
$\QL:=  \bbQ \otimes_\bbZ \Llat$  over $\bbQ$
by means of the map $\sg \mapsto 1\otimes \sg$.
Let $\Mlat$ denote the inverse image under $\tildemap$ of $\tM$ in $\Llat$. Then,
by \eqref{eq:L+}, we have
\[2\Llat \subseteq \Mlat \subseteq \Llat_+ \subseteq \Llat,\]
where $\Llat_+ = \Llat_+(\cA,-)$. We let
\[\G  = \textstyle  \langle  \frac 12 \Mlat \rangle \]
in $\QL$.
Then $\Llat$ is a subgroup of $\G$, and, since
$\Llat \subseteq \G \subseteq \frac 12 \Llat$, $\G$
is free abelian of rank $n$.

Now let $m$ be the order of the set $\tM$ and $\ell = \twor + m$.
We select  $\tau_1, \dots, \tau_\ell \in  \Mlat \subseteq \Llat_+$ such that $\tau_i = 0$ for $1\le i \le \twor + 1$ and
\[\widetilde{\tau_{\twor + 1}},\dots,  \widetilde{\tau_{\ell}} \ \text{ are the distinct elements of $\tM$}.\]
We also choose nonzero elements $\gm_1,\dots,\gm_\ell\in \cA_+$  with $\gm_i = 1$ for $1\le i \le \twor + 1$ and
$\gm_i \in \cA_+^{\tau_i}$ for $1\le i \le \ell$.
We call the sets $\set{\tau_i}_{i=1}^\ell$
and $\set{\gm_i}_{i=1}^\ell$  \emph{compatible sets of parameters} for $\tM$.

Next let $X = \cA^\ell$, a right $\cA$-module with standard basis $e_1,\dots,e_\ell$, and
define a $\G$-grading on $X$ so that $X$ is a $\G$-graded
$\cA$-module with $\deg(e_i) = \frac 12\tau_i$ for $1 \le i \le \ell$.  Observe that
\begin{equation}
\label{eq:suppX}
\supp_\G(X) = \bigcup_{i=1}^{\ell}\left(\half  \tau_i+\Llat\right) = \half \Mlat.
\end{equation}
Further define a $\G$-graded hermitian form
$\xi : X \times X \to \cA$ by
\[\xi(e_i,e_j) = \delta_{i \prmj}\gm_i\]
for $1\le i,j \le \ell$,  where $\prmi = \twor + 1 - i$ for $1\le i \le \twor$ and
$\prmi = i$ for $\twor + 1 \le i \le \ell$.

Then   Assumptions \ref{asspts:uLietorus}    hold with
\[X_\hyp = \textstyle \bigoplus_{i=1}^{\twor} e_i.\cA \andd X_\an = \bigoplus_{i=\twor+1}^{\ell} e_i.\cA.\]
Indeed, $X_\an$ is finely graded since
the elements $\frac 12 \tau_{\twor + 1},\dots,\frac 12 \tau_\ell$ are distinct modulo $\Llat$;
and it then follows that  $X_\an$ is anisotropic by Proposition \ref{prop:anisotropic}.
The other conditions are easily checked.  So we can construct the
$(Q\times \G)$-graded  Lie algebras $\cF = \ffu(X,\xi)$ and
$\cS = \fsu(X,\xi)$ as in \pref{con:uLietorus}.  In particular,
$\cS$ is a unitary Lie $n$-torus of type $\BCr$ that we say is   the \emph{unitary Lie $n$-torus
constructed from $\rk$, $\Llat$, $(\cA,-)$ and~$\tM$}.
\end{pgraph}

\begin{remark}
It follows from Corollary \ref{cor:biisomorphism}
that $\cF$ and $\cS$ are independent, up to bi-isomorphism, of the
choice of compatible sets of parameters $\set{\tau_i}_{i=1}^\ell$
and $\set{\gm_i}_{i=1}^\ell$ for~$\tM$.
\end{remark}

\begin{pgraph}
\label{pgraph:consimple}
The $(Q\times \G)$-graded  Lie algebras $\cF$ and  $\cS$ constructed in
\pref{con:uLientorus} have simple descriptions as  matrix algebras using the results of
Section \ref{subsec:unitary}.  For this we use matrix notation relative to the
homogeneous $\cA$-basis $\set{e_i}_{i=1}^\ell$ for $X$.   Then, by
\eqref{eq:Esum}, we see that  $\cE = \cE(X,\xi)$ is equal to the algebra $\Mat_\ell(\cA)$ of $\ell\times\ell$
matrices over $\cA$, and hence, since $X$ is free of finite rank over $\cA$, we have
$\cE = \End_\cA(\cA)$.
Thus, by \eqref{eq:Fchar},  we have
\[\cF =  \textstyle \sum_{i,j=1}^\ell u_{ij}(\cA) = \cU,\]
where $u_{ij}(\al) = e_{ij}(\al) - e_{\prmj\,\prmi}(\gm_j^{-1}\overline\al\gm_i)$
and $\cU = \fu(X,-)$ is the unitary Lie algebra of $\xi$.
Alternatively, if we let $J_{\twor} = (\delta_{i\prmj})\in \Mat_{\twor}(\F)$
and
\[G = \begin{bmatrix} J_{2r} & 0 \\ 0 & \diag(\gm_{\twor + 1},\dots, \gm_\ell) \end{bmatrix}\]
(the matrix of $\xi$ relative to the $\cA$-basis $\set{e_i}_{i=1}^\ell$),  then by
\eqref{eq:Fchar1} and \eqref{eq:eij*},
\[\cF = \set{T\in \gl_\ell(\cA) \ \suchthat \ T^* = -T}, \quad\text{where}\quad T^* = G^{-1}\overline T^t G \quad
\text{for $T\in \cE$},\]
$t$ denotes the transpose, and $\gl_\ell(\cA)$ is $\Mat_\ell(\cA)$ under the commutator product.
Moreover,  by
Proposition \ref{prop:propSU}\,(a),
\[\cS = \set{\ T\in \cF \suchthat \tr(T) \equiv 0\ \pmd}.\]
Finally, the $Q$-gradings on $\cF$ and $\cS$ are the root space decompositions
with respect to the adjoint action of $\hd = \bigoplus_{i=1}^\rk \F u_{ii}(1)$,
and the $\G$-gradings on $\cF$ and $\cS$ are obtained by restricting the $\G$-grading on $\cE$
which is given by
\begin{equation}
\label{eq:gradee}
\deg_\G(e_{ij}(\al)) = \textstyle \frac 12 \tau_i - \frac 12 \tau_j + \deg_\G(\al)
\end{equation}
for $1\le i,j\le \ell$ and
$\al$ homogeneous in $\cA$
(see Proposition \ref{prop:gradingmatrix}).
\end{pgraph}

We can now combine our results to prove the following:

\begin{theorem} [\textbf{Classification of centreless  Lie $\boldsymbol{n}$-tori of type $\boldsymbol{\BCr}$, $r\ge 3$}]\
\label{thm:classn} Assume $\F$ is a field  of characteristic 0 with
${\F^\times}^2 = \F^\times$. Suppose that  $n\ge 1$ and $\rk \ge 3$.
Let $\Llat$ be a free abelian group of rank $n$, and let
$\kappa_1,\dots,\kappa_s$ be a list of the distinct quadratic forms
over $\Ztwo$ on $\tL = \Llat/2\Llat$ up to isometry.  For $1\le i \le s$,   let
$(\cA_i,-)$ be an associative $\Llat$-torus with involution whose
mod-2 quadratic form is $\kappa_i$, and let
$\widetilde{M_{i1}},\dots,\widetilde{M_{i q_i}}$ be representatives of the orbits of the orthogonal group
$\Orth(\kappa_i)$ acting on the set of subsets of $\iso(\kappa_i)$
containing 0.  Then, the distinct
centreless Lie $n$-tori of type $\BCr$ are, up to bi-isomorphism,   the
unitary Lie $n$-tori $\cS_{ij}$ constructed as in \pref{con:uLientorus} from
$\rk$, $\Llat$, $(\cA_i,-)$ and $\widetilde{M_{ij}}$, $1\le i \le s$, $1\le j \le q_i$.
\end{theorem}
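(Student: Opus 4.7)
The plan is to combine three earlier results: the structure theorem (Theorem \ref{thm:structure}) to reduce any centreless Lie $n$-torus of type $\BCr$ to a unitary Lie torus $\fsu(X,\xi)$; the classification of associative $\Llat$-tori with involution by mod-$2$ quadratic forms over $\bbZ_2$ (Proposition \ref{prop:assocclass}); and the simplified bi-isomorphism criterion of Corollary \ref{cor:biisomorphism}, applicable here because $(\F^\times)^2 = \F^\times$ and $\bbZ^n$ has no $2$-torsion.

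For the existence direction, I would start with a centreless Lie $n$-torus $\cL$ of type $\BCr$ and invoke Theorem \ref{thm:structure}\,(b) to obtain a bi-isomorphism $\cL \cong \fsu(X,\xi)$ for some data $(\cA,-)$, $\xi$, $\G$ satisfying Assumptions \ref{asspts:uLietorus}. Lemma \ref{lem:suppcompare}\,(d) shows $\Llat := \supp_\G(\cA)$ is free of rank $n$, so $(\cA,-)$ is an associative $n$-torus with involution. Its mod-$2$ quadratic form $\kappa$ is isometric to a unique $\kappa_i$, and Proposition \ref{prop:assocclass} supplies an isograded-isomorphism $(\cA,-) \cong (\cA_i,-)$. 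Next I would set $M := 2\supp_\G(X) \subseteq \Llat$; this is a union of cosets of $2\Llat$ since $X$ is graded over the graded division algebra $\cA$ with $2\G \subseteq \Llat$, and $\widetilde M \subseteq \iso(\kappa)$ contains $0$ by Lemma \ref{lem:suppcompare}. Transporting to $\kappa_i$ and using the assumed $\Orth(\kappa_i)$-orbit decomposition, exactly one $\widetilde{M_{ij}}$ lies in the orbit of the image of $\widetilde M$. Proposition \ref{prop:induced} then lifts the corresponding isometry to an isograded automorphism of $(\cA_i,-)$ whose support-level map carries $M$ onto $\Mlat_{ij}$. Corollary \ref{cor:biisomorphism}\,(a) now delivers a bi-isomorphism $\cL \cong \cS_{ij}$.

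For uniqueness, suppose $\cS_{ij}$ and $\cS_{i'j'}$ are bi-isomorphic. Corollary \ref{cor:biisomorphism}\,(b) produces an isograded-isomorphism $\ph : (\cA_i,-) \to (\cA_{i'},-)$ with $\ph_\gr(2\supp_\G(X_{ij})) = 2\supp_\G(X_{i'j'})$. Proposition \ref{prop:assocclass} forces $\kappa_i \cong \kappa_{i'}$, so $i = i'$ by the distinctness of the chosen representatives. Then by Proposition \ref{prop:induced}, $\widetilde{\ph_\gr}$ lies in $\Orth(\kappa_i)$. From \eqref{eq:suppX}, $2\supp_\G(X_{ij}) = \Mlat_{ij}$ and $2\supp_\G(X_{ij'}) = \Mlat_{ij'}$; reducing modulo $2\Llat$ yields $\widetilde{\ph_\gr}(\widetilde{M_{ij}}) = \widetilde{M_{ij'}}$, placing these subsets in the same $\Orth(\kappa_i)$-orbit and forcing $j = j'$.

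The main obstacle is the bookkeeping required to translate faithfully between the orbit-level data (subsets of $\iso(\kappa_i) \subseteq \tL$, where the parameter $j$ lives) and the support-level condition on $\ph_\gr(2\supp_\G(X))$ that appears in Corollary \ref{cor:biisomorphism}. The translation is lossless in both directions: $2\supp_\G(X)$ is a union of cosets of $2\Llat$ (hence determined by its image in $\tL$), and every element of $\Orth(\kappa_i)$ is induced by an isograded automorphism of $(\cA_i,-)$ via Proposition \ref{prop:induced}. Once that equivalence is pinned down, the theorem is just the assembly of earlier machinery.
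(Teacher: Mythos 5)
Your proposal is correct and follows essentially the same route as the paper: reduce via Theorem \ref{thm:structure}\,(b), encode the extra data in the invariant $M = 2\supp_\G(X)$ (a union of cosets of $2\Llat$ with $\widetilde M \subseteq \iso(\kappa)$, $0 \in \widetilde M$, via Lemma \ref{lem:suppcompare} and \eqref{eq:suppX}), and then match the pair $(\kappa, \widetilde M)$ up to $\Orth(\kappa)$-orbit using Proposition \ref{prop:assocclass}, Proposition \ref{prop:induced}, and Corollary \ref{cor:biisomorphism} in both directions. No gaps; this is the paper's argument in only slightly different words.
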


\begin{proof}  Suppose first that two of the unitary Lie $n$-tori  are bi-isomorphic, say
$\cS$ constructed from $\rk$, $\Llat$, $(\cA_i,-)$ and  $\widetilde{M_{ij}}$;
and  $\cS'$ constructed from $\rk$, $\Llat$, $(\cA_{i'},-)$ and $\widetilde{M_{i'j'}}$.
By Corollary \ref{cor:biisomorphism}~(b) and \eqref{eq:suppX}, we have an isograded isomorphism
$\ph$ from $(\cA_i,-)$ onto $(\cA_{i'},-)$ such that
$\widetilde{\ph_\gr}(\widetilde{M_{ij}}) = \widetilde{M_{i'j'}}$. So by
Proposition \ref{prop:assocclass}, $i= i'$. Also, by
Proposition \ref{prop:induced}, $\widetilde{M_{ij}}$ and $\widetilde{M_{ij'}}$ are in the same orbit
under $\Orth(\kappa_i)$, and hence  $j=j'$.

Next  suppose that $\cL$ is a centreless Lie $n$-torus of type $\BCr$.
By Theorem \ref{thm:structure}\,(b), we can assume that $\cL = \cS$, where
$\cS = \fsu(X,\xi)$ is the unitary Lie $\G$-torus constructed
using $ \rk$, $\G$ (a free abelian group of rank $n$),
$\Llat'$, $(\cA,-)$ and $\xi : X\times X\to \cA$ as in
\pref{con:uLietorus}.  Also, by Lemma \ref{lem:suppcompare}\,(d),   $\Llat'$ is free of
rank $n$, so we can identify it with the given group $\Llat$.
Let $\Slat = \supp_{\G}(X)$, and set
\begin{equation}
\label{eq:class1}
\Mlat:= 2\Slat \subseteq \Llat_+\subseteq \Llat,
\end{equation}
where $\Llat_+= \Llat_+(\cA,-)$ (see Lemma  \ref{lem:suppcompare}\,(a) and (b)).
Note that $\Slat + \Llat \subseteq \Slat$, so $\Mlat + 2\Llat \subseteq \Mlat$, hence
\begin{equation}
\label{eq:class2}
\text{$\Mlat$  is the inverse image of $\tM$ under $\tildemap : \Llat  \to \tL$}.
\end{equation}

By Proposition \ref{prop:assocclass}, there exists an isograded isomorphism $\ph$ of the
associative $\Llat$-torus with involution $(\cA,-)$ onto the
associative  $\Llat$-torus with involution $(\cA_i,-)$ for some
$1\le i \le s$.   In addition, by \eqref{eq:class1}, $\ph_\gr(\Mlat) \subseteq \ph_\gr(\Llat_+) = \Llat_+(\cA_i,-)$,
and hence
$\widetilde{\ph_\gr}(\tM)  \subseteq \widetilde{\Llat_+(\cA_i,-)} = \iso(\kappa_i)$.
Since $0\in \widetilde{\ph_\gr}(\tM)$,
Proposition \ref{prop:induced} tells us that
there exists an isograded automorphism $\psi$ of $(\cA_i,-)$ such that
$\widetilde{\psi_\gr}(\widetilde{\ph_\gr}(\tM)) = \widetilde{M_{ij}}$ for some $1\le j \le q_i$.  Replacing $\ph$
by $\psi\circ \ph$, we can assume that $\widetilde{\ph_\gr}(\tM) =  \widetilde{M_{ij}}$.
Thus, letting $M_{ij}$ be the inverse image of $\widetilde{M_{ij}}$ under
$\tildemap : \Llat \to \tL$, we have, by \eqref{eq:class2}, that
$\ph_\gr(\Mlat) = M_{ij}$.   Applying  Corollary \ref{cor:biisomorphism}\,(a),  \eqref{eq:suppX} for $\cS_{ij}$, and \eqref{eq:class1}, we see that
$\cL$ is bi-isomorphic to  the unitary Lie $n$-torus  $\cS_{ij}$.
\end{proof}

Theorem \ref{thm:classn} reduces the  classification of centreless Lie $n$-tori of type $\BCr$, $r \geq 3$,  up to bi-isomorphism to two problems:
the classification of $n$-dimensional quadratic forms over $\Ztwo$ up to isometry;
and, given such a quadratic form $\kappa : \tL \to  \Ztwo$, the determination of the orbits of
the orthogonal group $\Orth(\kappa)$ acting on the set of
subsets containing 0 of $\iso(\kappa)$.
The first of these problems has a well-known  solution as we have  mentioned, while  the second can, at least in some cases
(in particular, for small $n$),  be worked out directly.
We will examine the case $n=3$
when we consider EALAs in the next section (see Example \ref{ex:EALAn=3}).

\subsection{Construction of extended affine Lie algebras of type $\BCr$}\
\label{subsec:EALA}

In this section, we construct some EALAs  of type $\BCr$.
We do not assume here that the reader is familiar with EALAs -- it suffices to know
that an EALA  is a triple $(\rE,\rH,\form)$ consisting of a Lie algebra $\rE$
with a finite-dimensional  distinguished abelian Cartan subalgebra $\rH$  and a nondegenerate invariant
symmetric bilinear  form $\form$ satisfying a natural class of axioms \cite{N2} which model
properties of affine Kac-Moody Lie algebras.  The \emph{nullity}
of $\rE$ is the rank of the finitely generated free abelian group $\G$
generated by the isotropic roots of $\rE$, and the \emph{type} of $\rE$
is the type of the finite irreducible root system obtained by identifying two roots of $\rE$
if they
differ by an element of $\G$.   Besides \cite{N2}, the reader can consult \cite{BGK}, \cite{AABGP}, \cite{N3}
and \cite{AF} and the references therein for more information about this topic.

Generalizing earlier work in type $\text{A}_\rk$ \cite{BGK,BGKN},
Neher \cite{N2} gave a construction of  a family of EALAs of nullity $n$
from a centreless Lie torus $\cL$ of nullity $n$.
Roughly speaking  one constructs an EALA $\rE = \cL \oplus \rC \oplus \rD$,
where $\rD$ is a graded subalgebra of  the Lie algebra $\SCDer(\cL)$ of skew-centroidal
derivations of $\cL$ and $\rC$ is the graded dual of $\rD$,
and where the product on $\rE$ involves a 2-cocycle on $\rD$ with values in $\rC$.
Varying the algebra $\rD$ and the 2-cocycle, one obtains a family of EALAs determined by $\cL$.
It was
announced
in \cite{N2}  that any EALA occurs in the family constructed from some $\cL$ in this way.
Moreover, it was shown in   \cite[Cor.~6.3]{AF} that
if two centreless Lie $n$-tori $\cL$ and $\cL'$ are bi-isomorphic,  then the EALAs in the corresponding families
are pairwise isomorphic.

If $\cL$ is a centreless Lie $n$-torus of type $\Dl$,  where $\Dl$ is reduced, then
the corresponding EALAs have type $\Dl$.
On the other hand, if $\Dl$ has type $\BCr$ and $\supp_Q(\cL) = \Dl\cup \set{0}$,
then the corresponding EALAs have type $\BCr$.
So, it follows from Remark \ref{rem:Btype} that
all EALAs of nullity $n$ and type $\Br$ and $\BCr$ are obtained from
a centreless Lie $n$-torus of type $\BCr$.  Moreover,  if $\rk \ge 3$, it follows from
Theorem \ref{thm:structure} that there is no loss of generality
in starting with a unitary Lie $n$-torus $\cS$.
For the sake of readers interested in working concretely with EALAs, in this section
we review Neher's construction, without proofs, in this case.    Instead of constructing a family of EALAs from $\cS$,
we focus on just one EALA,   which is \emph{maximal} in the sense that
$\rD = \SCDer(\cS)$  (the full set of skew-centroidal derivations)   is used in the construction; and we use the
trivial 2-cocycle on $\rD$ to define the multiplication.
The reader familiar with \cite{N2} will have no trouble obtaining the whole family
in the same way.

Throughout this section, \emph{we will assume that $\F$ is
a field of characteristic 0 with  $(\F^\times)^2 = \F^\times$, $\rk \ge 3$, and that
$\cF = \ffu(X,\xi)$ and $\cS = \fsu(X,\xi)$ are the $(Q\times \G)$-graded  Lie algebras  constructed from $\rk$, $\Llat$, $(\cA,-)$ and $\tM$ as in \pref{con:uLientorus}}
using compatible sets of parameters  $\set{\tau_i}_{i=1}^\ell$
and $\set{\gm_i}_{i=1}^\ell$, where $\ell = \twor + m$, and $m$ is the order of $\tM$.
We let $\cE = \fe(X,\xi)$, and we
view $\cE$, $\cF$ and $\cS$ as graded matrix algebras as in \pref{pgraph:consimple}.

\begin{pgraph}
\label{pgraph:conEALA}
As in Theorem \ref{thm:SU}\,(d), we
define a $\G$-graded nondegenerate associative symmetric bilinear form $\form$ on $\cE$
by
\[(T_1\mid T_2) = \varpi(\tr(T_1,T_2))\]
for $T_1,T_2\in \cS$, where  $\varpi : \cA \to \F$ is the $\Llat$-graded projection
of $\cA$ onto $\F$ (see Theorem \ref{thm:SU}\,(d)). Then,
$\form$ restricts to a $\G$-graded nondegenerate  invariant form on $\cF$ and $\cS$.

Let $Z = Z(\cA,-)$; \,  $\kappa : \Llat \to \Ztwo$ be the mod-2 quadratic form of $(\cA,-)$;  and
\[\Gm := \Gm(\cA,-) = \supp_\Llat(Z) \andd \Llat_+ := \Llat_+(\cA,-).\]
By \eqref{eq:L+}, we have
\[\Gm :=  \set{\sg\in \Llat \suchthat \tsg \in \rad(\kappa)} \andd
\Llat_+ =  \set{\sg\in \Llat \suchthat \tsg \in \iso(\kappa)}.\]

Recall next that $\cE$ is a left $Z$-module (see \pref{pgraph:Zaction}), and we
have $\fz e_{ij}(\al) = e_{ij}(\fz\al)$ and $\fz u_{ij}(\al) = u_{ij}(\fz\al)$ for $\fz\in Z$, $1\le i,j\le \ell$
and $\al\in \cA$.  Furthermore, by Proposition \ref{prop:centroid} and Proposition \ref{prop:propSU}\,(b),
the map $\fz\to \Lmult_\fz|_\cS$
is an isomorphism of
$Z$ onto the centroid $\Cent(\cS)$ of $\cS$.  This map is $\G$-graded
so $\supp_\G(\Cent(\cS)) = \Gm$; that is,  $\Gm$ is the centroidal grading group of $\cS$
(see \textbf{\ref{rem:LTsupport}}~(c)).

Let
$\Hom(\G,\F)$ be the group of group homomorphisms of $\G$ into $\F$, and let
$\Der_\F(\cS)$ be the algebra of derivations of $\cS$. For
$\theta\in \Hom(\G,\F)$, we define the \emph{degree derivation}
$\partial_\theta\in \Der_\F(\cS)$
by $\partial_\theta|_{\cS_\sg} = \theta(\sg)\id_{\cS_\sg}$ for $\sg\in \G$.
Setting
\[\cD = \partial_{\Hom(\G,\F)},\]
we see that
$\cD$ is an $n$-dimensional abelian subalgebra of $\Der_\F(\cS)$.

Now $\Der_\F(\cS)$ is a left $Z$-module under the
action
$\fz d = \Lmult_\fz\circ d$ for $\fz\in Z$ and
$d\in \Der_\F(\cS)$.
Moreover, the space
\[\CDer(\cS) = Z\cD\]
is a subalgebra of the Lie algebra $\Der(\cS)$,   and
$\CDer(\cS)$ is a free $Z$-module of rank $n$
which is
$\Gm$-graded with
$\CDer(\cS)^\sg= Z^\sg \cD$
for $\sg\in \Gm$.
Let
\[\rD := \SCDer(\cS), \]
the $\Gm$-graded subalgebra of  $\CDer(\cS)$ consisting
of the derivations in $\CDer(\cS)$ that are skew relative to the form $\form$.
Then $\rD$ is called the
\emph{Lie algebra of skew-centroidal derivations of $\cS$}.
Since $\rD^0 = \cD$, $\dim(\rD^0) = n$, while  $\dim(\rD^{\sg})= n-1$ if $\sg\in \Gm\setminus{0}$.

Next let
\begin{equation*}
\label{eq:SCDer}
\rC := \rD^\grd  = \bigoplus_{\sg\in\Gm} (\rD^\sg)^* \subseteq \rD^*,
\end{equation*}
be the graded-dual space of $\rD$, where
$(\rD^\sg)^*$ is embedded in
$\rD^*$ by letting its elements act trivially on $\rD^\tau$ for $\tau\ne \sg$.
We give the vector space $\rC$ a $\Gm$-grading by setting
\[\rC^\sg = (\rD^{-\sg})^*,\]
in which case, $\rC$ is a $\Gm$-graded $\rD$-module
by means of the contragradient action~``$\ast$''  given by
\[(d \staraction c)(e) = - c([d,e]),\]
for $d, e \in \rD$, $c\in \rC$.
Now define $\varsigma: \cS \times \cS \to \rC$ by
\[\varsigma(T_1,T_2)(d) = (dT_1|T_2).\]
Then $\varsigma$ is a $\Gm$-graded $2$-cocycle on $\cS$  with values in
the trivial $\cS$-module  $\rC$.

With these ingredients, we are ready to build an EALA.    Set
\[\rE = \cS\oplus \rC \oplus\rD,\]
where $\rC = \rD^\grd$.
We  identify $\cS$, $\rC$  and $\rD$
with  subspaces of $\rE$,
and define a product $[\, ,\, ]_\rE$ on $\rE$ by
\begin{multline}
\label{eq:EALAprod}
[T_1+c_1+d_1,T_2+c_2+d_2]_\rE \\
=  \left([T_1,T_2]+d_1(T_{2})-d_{2}(T_1)\right)
+ \left(d_1 \staraction c_{2}-d_2 \staraction c_1+\varsigma(T_1,T_{2})\right) + [d_1,d_2]
\end{multline}
for $T_i\in \cS$, $c_i \in \rC$, $d_i\in \rD$.
Then,  $\rE$ is a $\G$-graded Lie  algebra with the direct sum grading,
and
\[\rH :=  \hd\oplus \rC^0\oplus \rD^0\]
 is an abelian subalgebra of $\rE$, where  $\hd = \bigoplus_{i=1}^\rk \F u_{ii}(1)$.
Finally we extend the bilinear form $\form$ on $\cS$ to a  graded bilinear form
$\form$ on $\rE$ by defining
\begin{equation}
\label{eq:EALAform}
(T_1+c_1+d_1\mid T_2+c_2+d_2)=(T_1\mid T_{2})+c_1(d_{2})+c_{2}(d_1).
\end{equation}
Then, Neher's theorem \cite[Thm.~6]{N2} tells us that
$(\rE,\form,\rH)$ is an  EALA
of nullity $n$ and type $\BCr$ or $\Br$.  Moreover, by that same theorem
and our Theorem \ref{thm:classn},
we have constructed a maximal EALA of nullity $n$ in each family
of EALAs of nullity $n$ and type $\BCr$ or $\Br$.

Finally, we note that  the EALA $\rE$  is of type $\Br$ if and only if $\kappa = 0$.
\end{pgraph}

\begin{pgraph} \ptitle{Computations using bases}
\label{pgraph:conEALA2} To facilitate working with
the EALA $\rE$ constructed in \pref{pgraph:conEALA},  we now show how to select
convenient bases for $\Llat$ and $\G$ and use coordinates to obtain
expressions for the product and form on $\rE$. This follows the original
approach
in \cite{BGK} and \cite{BGKN} which treated  type $\text{A}_\rk$, $\rk \ge 2$.

We may assume that  $(\cA,-) = \ATI_\Llat(\kappa,\kappa_\ttb)$,
where $\kappa_\ttb$ is chosen as in Remark \ref{rem:quadprop}, in which case we have
\begin{equation}
\label{eq:quadprop2again}
t^\sg t^\tau = t^\tau t^\sg = t^{\sg + \tau} \qquad \text{for } \sg\in \Gamma, \tau
\in \Llat.
\end{equation}
Now choose a basis $\set{\sg_1,\dots,\sg_n}$ for $\Llat$ such that
\begin{equation}
\label{eq:nicebasis}
\begin{gathered}
\set{\sg_1,\dots,\sg_{n_1}, 2\sg_{n_1+1},\dots, 2\sg_n} \text{ is a basis for $\langle \Mlat \rangle$},\\
\end{gathered}
\end{equation}
where $\sg_1,\dots,\sg_{n_1}\in \Mlat$ and $0\le n_1\le n$.  (To obtain such a basis,
chose a basis for $\tL$ with the appropriate properties
and lift to a basis for  $\Llat$.)
Then, $\set{\lm_1,\dots,\lm_n}$  is a basis for $\G = \langle \frac 12 \Mlat\rangle$, where
\[\lm_i = \frac 12 \sg_i \ \text{ for } \  1 \le i \le n_1, \andd \lm_i = \sg_i  \  \text{ for } \  n_1+1\le i \le n.
\]

Next we introduce  coordinates in $\cD$ and $\CDer(\cS)$.  First, for $\lm = \sum_{i=1}^n \ell_i\lm_i\in \G$,
we set
\[\lm^\# = (\ell_1,\dots,\ell_1) \in \F^n.\]
Further, let $\set{\theta_1,\dots,\theta_n}$
be the dual basis for $\set{\lm_1,\dots,\lm_n}$ in $\Hom(\G,\F)$, and for $\bolds = (s_1,\dots,s_n) \in \F^n$
write  $\theta_\bolds = \sum_{i=1}^n s_i\theta_i$.   Then
\begin{equation}
\label{eq:thetas}
\theta_\bolds(\lm) = \lm^\# \cdot \bolds
\end{equation}
for $\bolds\in \F^n$ and $\lm\in \G$, where $\cdot$ is the usual dot product on $\F^n$.
Also,  if $\bolds  \in \F^n$, let $\partial_\bolds = \partial_{\theta_\bolds}$. Then
\[\cD = \set{\partial_\bolds \suchthat \bolds \in \F^n};\]
$\CDer(\cS) = \sum_{\sg\in\Gm} \CDer(\cS)^\sg$ with
$\CDer(\cS)^\sg = t^\sg \cD$
for $\sg\in \Gm$; and one checks using  \eqref{eq:quadprop2again} that
the multiplication in $\CDer(\cS)$ is given by
\begin{equation}
\label{eq:EALAc1}
[t^\sg\partial_\bolds, t^\rho\partial_\boldr] =
t^{\sg+\rho}( (\rho^\# \cdot \bolds)\partial_\boldr
-  (\sg^\# \cdot \boldr)\partial_\bolds ).
\end{equation}
The homogeneous components of the algebra $\rD = \SCDer(\cS) = \bigoplus_{\sg\in\Gm} \rD^\sg$
of skew-centroidal derivations of $\cS$ are given by
\begin{equation*}
\rD^\sg =  \textstyle \bigoplus_{\bolds \in \F^n,\, \sg^\#\cdot \bolds = 0} \F t^\sg \partial_\bolds.  \end{equation*}
for $\sg\in \Gm$.

To discuss $\rC = \rD^\grd$
we define $\rc_\sg(\bolds)\in \rC^\sg = (\rD^{-\sg})^*$ for $\bolds \in \F^n$ and
$\sg\in \Gm$ by
\begin{equation}
\label{eq:EALAc2}
\rc_\sg(\bolds) (t^{-\sg} \partial_\boldr) = \bolds\cdot\boldr
\end{equation}
for $\boldr \in \F^n$ with $\sg^\# \cdot \boldr = 0$.
Then,  $\bolds \mapsto \rc_\sg(\bolds)$ determines a linear map
of $\F^n$ into $\rC^\sg$ with kernel $\F \sg^\#$.  So by counting dimensions, we see that
\[\rC^\sg = \set{\rc_\sg(\bolds) \suchthat \bolds\in \F^n}.\]
The action of $\rD$ on $\rC$ is given by
\begin{equation}
\label{eq:EALAc3}
(t^\rho\partial_\boldr) \staraction (\rc_\sg(\bolds)) =
\rc_{\sg + \rho}((\sg^\#\cdot \boldr)\bolds + (\bolds\cdot\boldr)\rho^\#).
\end{equation}
for $\sg\in \Gm$, $\bolds\in \F^n$, and $\rho\in\Gm$, $\boldr\in \F^n$ with $\rho^\#\cdot \boldr = 0$.

To calculate the action of $\CDer(\cS)$ on $\cS$ using coordinates, it is  natural to
extend the action  of $\CDer(\cS)$ to $\cE = \Mat_\ell(\cA)$; and then restrict to $\cS$.
Indeed, since $\cE$ is $\G$-graded we can define degree derivations
$\set{\partial_\theta^\cE}_{\theta\in \Hom(\G,\F)}$ of $\cE$ just as for $\cS$.
Moreover, $\Der_\F(\cE)$ is naturally  a left $Z$-module and we have the
$\G$-graded Lie algebra $Z \partial^\cE_{\Hom(\G,\F)}$ in $\Der_\F(\cE)$.
Letting $\partial_\bolds^\cE = \sum_{i=1}^n s_i \partial_{\theta_i}^\cE$
for $\bolds = (s_1,\dots,s_n) \in \F^n$, we see  that the restriction map
$t^\sg \partial^\cE_\bolds \mapsto t^\sg \partial_\bolds$  is an algebra isomorphism
sending
$Z \partial^\cE_{\Hom(\G,\F)}$ onto $\CDer(\cS)$.   Henceforth, we treat
this map as an identification.  This gives an action of $\CDer(\cS)$ on $\cE$, and one checks
directly  using \eqref{eq:gradee} and \eqref{eq:thetas} that
\begin{equation}
\label{eq:CDeraction}
(t^\sg \partial^\cE_\bolds) (e_{ij}(t^\tau))
= \textstyle \bolds\cdot(\tau + \frac 12 \tau_i - \frac 12 \tau_j)^\# e_{ij}(t^{\sg+\lm})
\end{equation}
for $\sg\in\Gm$, $\bolds\in \F^n$ with $\sg^\#\cdot \bolds = 0$ and $\tau\in \Llat$.
By restriction, this determines  the action of $\CDer(\cS)$ on $\cS$,
and hence the action of $\rD$ on $\cS$.

To calculate the 2-cocycle $\varsigma: \cS \times \cS \to \rC$, it is natural to proceed
in a similar fashion  and  extend
this map to a bilinear map
$\varsigma : \cE \times\cE \to \rC$ defined by
\[\varsigma(T_1,T_2)(d) = (dT_1, T_2)\]
for $T_1,T_2\in \cE$, $d\in \rD$;
and then restrict to $\cS\times\cS$.  A
direct calculation using \eqref{eq:ATIdef},  \eqref{eq:quadprop2again} and \eqref{eq:CDeraction}
shows  that
\begin{multline}
\label{eq:EALAc4}
\varsigma (e_{ij}(t^\sg), e_{pq}(t^\tau))
\\=
\left\{
  \begin{array}{ll}
    \textstyle
\delta_{iq}\delta_{jp} (-1)^{\kappa_\ttb(\ttau,\ttau)}
\rc_{\sg+\tau}(( \sg + \frac 12 \tau_i - \frac 12 \tau_j)^\#), & \hbox{if $\sg + \tau \in \Gm$} \\
    0, & \hbox{otherwise}
  \end{array}
\right.
\end{multline}
for $\sg,\tau\in \Llat$
and  $1\le i,j,p,q\le n$.
By restriction, we obtain the 2-cocycle $\varsigma$ on $\cS$.

Equations  \eqref{eq:EALAc1},  \eqref{eq:EALAc2},   \eqref{eq:EALAc3}
and \eqref{eq:EALAc4} now allow us to calculate
products in $\cE$ explicitly using  \eqref{eq:EALAprod} and the form on $\cE$ using  \eqref{eq:EALAform}.
\end{pgraph}

\begin{example}
\label{ex:EALAn=1} Suppose that $n=1$.  Then the EALA $\rE$ constructed  in \pref{pgraph:conEALA} and
\pref{pgraph:conEALA2} is an affine Kac-Moody algebra.  If the mod-2 quadratic form $\kappa$
of $(\cA,-)$ is not 0, then $\widetilde{\Llat_+} = 0$, so $\tM = 0$.  In this case
$\rE$ has  affine type $\text{A}_\twor^{(2)}$ in the notation of \cite{K}.
On the other hand if $\kappa = 0$, then
$\widetilde{\Llat_+} = \tL$, so $\tM = 0$ or $\tL$.  The EALA $\rE$ is then of
affine type $\Br^{(1)}$ or $\text{D}_{\rk+1}^{(2)}$ respectively.
(In the notation of \cite{MP}, which is more natural in this context, the three affine types occurring here are
in order: $\BCr^{(2)}$, $\Br^{(1)}$ and $\Br^{(2)}$.)
\end{example}

\begin{example}
\label{ex:EALAn=3}
Suppose that $\rk \ge 3$ and $n=3$.  Let
$\Llat$ be a free abelian group  with basis $\set{\sg_1,\sg_2,\sg_3}$.  The  five $3$-dimensional
quadratic forms over $\Ztwo$ were listed in Table \ref{table:quad}. We consider the case when
$(\cA,-) = \ATI_\Llat(\kappa,\kappa_\ttb)$, where
\[\textstyle \kappa(\sum_{i=1}^n \ell_i\widetilde{\sg_i}) =  \ell_3 + \ell_1\ell_2
\andd \kappa_\ttb(\sum_{i=1}^n \ell_i\widetilde{\sg_i},\sum_{i=1}^n \ell'_i\widetilde{\sg_i}) =  \ell_1\ell'_2.\]
We  have $\rad{\kappa} = 0$ and $\iso(\kappa)
= \set{0, \widetilde{\sg_1},\widetilde{\sg_2}, \widetilde{\sg_1} +\widetilde{\sg_2}+\widetilde{\sg_3}}$.
One can check (for example using facts about $\Orth(\kappa)$ from
\cite[\S 1.16]{D}) that, for $1\le m \le 4$, $\Orth(\kappa)$ acts transitively
on the set of order $m$ subsets of  $\iso(\kappa)$ containing~0.
So, by Theorem \ref{thm:classn}, for $1\le m \le 4$ there is up
to bi-isomorphism exactly one centreless Lie $3$-torus $\cS$ of type $\BCr$ with coordinate torus
$(\cA,-)$ and anisotropic rank $m$.

We consider the case when $m=3$  and we select
\[\tM = \set{0, \widetilde{\sg_1}, \widetilde{\sg_2}}.\]
Then   $\langle \Mlat \rangle  = \bbZ \sg_1 \oplus \bbZ \sg_2 \oplus 2\bbZ \sg_3$,
so the given basis $\set{\sg_1,\sg_2,\sg_3}$ satisfies \eqref{eq:nicebasis} with $n_1 = 2$.
Now $\ell = \twor + 3$, and we let
\[\tau_{\twor + 2} = \sg_1,\quad  \tau_{\twor + 3} = \sg_2,\quad \tau_{i} = 0 \  \text{ for } \  1\le i \le \twor+1,\]
and
\[\gm_{\twor + 2} = t_1,\quad  \gm_{\twor + 3} = t_2, \quad \gm_{i} = 1\  \text{ for } \  1\le i \le \twor+1,\]
in which case
$\set{\tau_i}_{i=1}^\ell$
and $\set{\gm_i}_{i=1}^\ell$ are compatible sets of parameters for $\tM$.
We construct graded matrix algebras $\cE$, $\cF$ and $\cS$ from
$\rk$, $\Llat$, $(\cA,-)$ and
$\tM$ (using the above parameter sets) as in \pref{pgraph:consimple}.
Then
\[\cS = \set{T\in \gl_{\twor + 3}(\cA) \ \suchthat \ G^{-1}\overline T^t G = -T \text{ and } \tr(T) \equiv 0\ \pmd },\]
where
\[G = \begin{bmatrix} J_{2r} & 0 \\ 0 & \diag(1,t_1,t_2)\end{bmatrix}.\]
The $(Q\times\G)$-gradings  on  $\cF$ and $\cS$ are as described in
\pref{pgraph:consimple} with $\G =\textstyle \langle \frac 12 \Mlat \rangle = \langle \frac 12 \sg_1,\frac 12 \sg_2, \sg_3\rangle$.
By Theorem \ref{thm:classn}, $\cS$ is  the unique (up to bi-isomorphism)
centreless Lie $3$-torus of type $\BCr$ with coordinate torus $(\cA,-)$ and anisotropic rank $3$.

Finally, we let $\rE$ be the EALA constructed as in \pref{pgraph:conEALA}.  Since
$\widetilde{\Gm} = \iso(\kappa) = 0$, we have $\Gm = 2\Llat$ and
$Z = \F[\Gm] = \F[2\Llat]$.
We can now use
coordinates to calculate the product and form in $\rE$ as in \pref{pgraph:conEALA2}.
\end{example}

\section[Conclusions]{\cm CONCLUSIONS}
\label{sec:conclude}

We finish with some remarks on special cases and possible generalizations of our results.
Suppose that $\G$ is arbitrary (unless mentioned otherwise) and $\characteristic(\F) = 0$.

\subsection{Remarks}

\begin{remark} \ptitle{Lie tori of type $\Br$, $\rk \ge 3$}
If we  specialize to the case when
the involution $-$
on the associative torus $\cA$ is the identity,
Theorems \ref{thm:structure} and \ref{thm:biisomorphism} yield
structure and bi-isomorphism theorems for centreless $\G$-Lie tori
of type $\Br$, $\rk \ge 3$. The structure theorem for type $\Br$
was proved (in a different form) when $\G = \bbZ^n$ and $\F = \mathbb C$
in \cite{AG}, and it is a special case of a more general structure theorem
for division graded Lie algebras of type $\Br$ due to Yoshii \cite{Y3}.
However, the bi-isomorphism theorem is new even for  type~$\Br$.

Also,  if we specialize to the case when
the quadratic form is trivial in the classification result,  Theorem \ref{thm:classn},
we obtain a classification of the centreless Lie $n$-tori of type $\Br$, $\rk\ge 3$, up to bi-isomorphism.
\end{remark}

\begin{remark}  As we have noted earlier,  in the construction
of a family of EALAs from a centreless Lie $n$-torus,
two centreless Lie $n$-tori that are bi-isomorphic give isomorphic families of EALAs \cite[Cor.~6.3]{AF}.
However, the converse is not true.    To obtain a one-to-one correspondence
between centreless Lie $n$-tori
and families of EALAs of nullity $n$
one needs a coarser equivalence relation on centreless Lie $n$-tori
called \emph{isotopy} [ibid].
Therefore, it would be beneficial  to have a version of
the classification result, Theorem \ref{thm:classn},   with isotopy replacing bi-isomorphism.  With the results
for the
other types of root systems in \cite{AF}  as a model, we expect that Theorem \ref{thm:classn} will be the first main step in obtaining such
a result for type $\BCr$.
\end{remark}

\begin{remark}
\label{rem:IARA}
A generalization of Lie $\G$-tori of type $\Dl$
which arose out of Yoshii's work   is the class of
\emph{pre-division $(\Dl,\G)$-graded Lie algebras} (see \cite[\S 2]{Y3}, \cite[\S 5.1]{N3}).
Such a graded Lie algebra is said to be \emph{invariant} if it possesses a suitable
invariant bilinear form.    Correspondingly,  Neher
has introduced an interesting class of algebras generalizing
EALAs called \emph{invariant affine reflection algebras} (IARAs) \cite[\S 6.7]{N3}.
In \cite[Thm.~6.10]{N3},  Neher has announced
that a family of IARAs can be constructed
from an invariant  pre-division $(\Dl,\G)$-graded Lie algebra whenever $\G$ is torsion free.  In particular,
since a unitary Lie $\G$-torus has an invariant form by Theorem \ref{thm:SU},
it can be used to construct a family
of IARAs   when $\G$ is torsion free.

In light of Neher's work discussed above,  it would be useful to prove a generalization of our structure theorem and
bi-isomorphism theorem for invariant pre-division $(\BCr,\G)$-graded Lie algebras, $\rk \ge 3$.
Such results would be especially interesting in the
particular case of an invariant \emph{division} $(\BCr,\G)$-graded Lie algebra,  since they would naturally
include our main theorems as well as the
finite-dimensional result mentioned at the beginning of this paper (when $\G = 0$).
We expect that the results and arguments
in the division case would  closely follow the ones   in the present  work.
\end{remark}

\bibliographystyle{amsalpha}

\end{document}